\newcommand{\sm}{\smallskip}
\newtheorem{theorem}[subsection]{Theorem}
\newtheorem{proposition}[subsection]{Proposition}
\newtheorem{lemma}[subsection]{Lemma}
\newtheorem{corollary}[subsection]{Corollary}
\theoremstyle{definition}
\newtheorem{remark}[subsection]{Remark}
\newtheorem{remarks}[subsection]{Remarks}
\newtheorem{example}[subsection]{Example}
\numberwithin{equation}{section} \allowdisplaybreaks
\numberwithin{equation}{subsection}
\def\gg{\mathfrak g}
\def\G{\mathbf G}
\def\kalg{k\text{--}alg}
\newcommand{\Ima}{\operatorname{Im}}
\newcommand{\Ker}{\operatorname{Ker}}
\newcommand{\Aut}{\operatorname{Aut}}
 \DeclareMathOperator{\ad}{ad}
\newcommand{\Out}{\operatorname{Out}}
\newcommand{\Hom}{\operatorname{Hom}}
\newcommand{\pr}{\operatorname{pr}}
\newcommand{\End}{\operatorname{End}}
\newcommand{\Ctd}{\operatorname{Ctd}}
\newcommand{\SCDer}{\operatorname{SCDer}}
 \newcommand{\bAut}{\rm \bf Aut}
\newcommand{\Der}{\operatorname{Der}}
\newcommand{\N}{\operatorname{N}}
 \DeclareMathOperator{\Id}{Id}
\newcommand{\g}{\mathfrak g}
\def\Z{\mathbb Z}
\newcommand{\bmu}{{\pmb \mu}}
\newcommand{\bs}{{\pmb \sigma}}
\def\bB{\text{\rm \bf B}}
\def\bT{\text{\rm \bf T}}
\def\bG{\text{\rm \bf G}}
\def\dg{\g}
\def\dG{\mathbf G}
\newcommand{\bOut}{{\rm \bf Out}}
\def\bAut{\text{\rm \bf Aut}}
\def\bOut{\text{\rm \bf Out}}
\def\bmu{\boldsymbol{\mu}}
\def\al{\alpha} \def\be{\beta}
\def\si{\sigma}
\newcommand\inpr{(\cdot|\cdot)} 
\newcommand\lan{\langle} \newcommand\ran{\rangle}
\newcommand\tl{\tilde}
\newcommand\re{^{\rm re}}
\newcommand\ind{_{\rm ind}}
\newcommand\ts{\textstyle}
\newcommand\gr{{\rm gr}}
\newcommand\ch{\sp{\scriptscriptstyle\vee}}
\newcommand\co{\colon} 
\newcommand\wti{\widetilde}
\newcommand\ideal{\triangleleft}
\newcommand\ot{\otimes}
\newcommand\pa{\partial}
\newcommand\an{^{\rm an}}
\DeclareMathOperator{\Ad}{Ad}
\DeclareMathOperator{\EAut}{EAut}
\DeclareMathOperator{\CDer}{CDer}
\DeclareMathOperator{\ev}{ev}
\DeclareMathOperator{\EA}{EA}
\DeclareMathOperator{\IDer}{IDer}
\DeclareMathOperator{\Image}{Im}
\DeclareMathOperator{\Span}{span}
\DeclareMathOperator{\SDer}{SDer}
\DeclareMathOperator{\supp}{supp}
\DeclareMathOperator{\res}{res}
\newcommand\rescc{\overline{\res}_c}
\DeclareMathOperator{\rmZ}{Z}
\newcommand\de{\delta}\newcommand\De{\Delta}
\newcommand\eps{\varepsilon}\newcommand\ep{\eps}
\newcommand\ga{\gamma}
\newcommand{\Ga}{\Gamma}
\newcommand\ka{\kappa}
\newcommand\la{\lambda}
\newcommand\La{\Lambda} 
\newcommand\ta{\tau}
\newcommand\vphi{\varphi} 
\newcommand\ze{\zeta}
\newcommand{\euD}{\EuScript{D}}
\newcommand\euQ{\EuScript{Q}}
\newcommand\scZ{\mathcal{Z}}
\newcommand\frh{\ensuremath{\mathfrak{h}}} \newcommand\h{\frh}
\newcommand\lsl{\ensuremath{\mathfrak{sl}}}
\newcommand\ZZ{\mathbb{Z}}
\begin{document}

\title[]{On conjugacy of Cartan subalgebras
in extended affine Lie algebras}

\author{V. Chernousov}
\address{Department of Mathematics, University of Alberta,
    Edmonton, Alberta T6G 2G1, Canada}
\thanks{ V. Chernousov was partially supported by the Canada Research
Chairs Program and an NSERC research grant} \email{chernous@math.ualberta.ca}

\author{E. Neher}
\address{Department of Mathematics and Statistics, University of Ottawa,
    Ottawa, Ontario K1N 6N5, Canada}
\thanks{E.~Neher was partially supported by a Discovery grant from NSERC}
\email{neher@uottawa.ca}

\author{A. Pianzola}
\address{Department of Mathematics, University of Alberta,
    Edmonton, Alberta T6G 2G1, Canada.
    \newline
 \indent Centro de Altos Estudios en Ciencia Exactas, Avenida de Mayo 866, (1084) Buenos Aires, Argentina.}
\thanks{A. Pianzola wishes to thank NSERC and CONICET for their
continuous support}\email{a.pianzola@math.ualberta.ca}

\author{U. Yahorau}
\address{Department of Mathematics, University of Alberta, Edmonton, Alberta T6G 2G1, Canada\newline \indent
present address: Department of Mathematics and Statistics, University of
Ottawa, Ottawa, Ontario K1N 6N5, Canada} \email{yahorau@ualberta.ca}

\begin{abstract} That finite-dimensional simple Lie algebras over the complex numbers can be classified by means of purely combinatorial and geometric objects such as Coxeter-Dynkin diagrams and indecomposable irreducible root systems, is arguably one of the most elegant results in mathematics. The definition of the root system is done by fixing a Cartan subalgebra of the given Lie algebra. The remarkable fact is that (up to isomorphism) this construction is independent of the choice of the Cartan subalgebra. The modern way of establishing this fact is by showing that all Cartan subalgebras are conjugate.

For symmetrizable Kac-Moody Lie algebras, with the appropriate definition
of Cartan subalgebra, conjugacy has been established by Peterson and Kac.
An immediate consequence of this result is that the root systems and
generalized Cartan matrices are invariants of the Kac-Moody Lie algebras.
The purpose of this paper is to establish conjugacy of Cartan subalgebras
for extended affine Lie algebras;  a natural class of Lie algebras that
generalizes the finite-dimensional simple Lie algebra and affine Kac-Moody
Lie algebras.

\end{abstract}

\maketitle

\section*{Introduction}

Let $\gg$ be a finite-dimensional split simple Lie algebra over a field $k$
of characteristic $0$, and let $\bG$ be the simply connected
Chevalley-Demazure algebraic group associated to $\gg$. Chevalley's theorem
(\cite[VIII, \S3.3, Cor de la Prop.~10]{bou:Lie78}) asserts that all split
Cartan subalgebras $\h$ of $\gg$ are conjugate under the adjoint action of
$\bG(k)$ on $\gg.$ This is one of the central results of classical Lie
theory. One of its immediate consequences is that the corresponding root
system is an invariant of the Lie algebra (i.e., it does not depend on the
choice of Cartan subalgebra).

We now look at the analogous question in the infinite dimensional set up as it relates to extended affine Lie algebras (EALAs for short). We assume henceforth that $k$ is algebraically closed, but the reader should keep in mind that our results are more akin to the setting of Chevalley's theorem for general $k$  than to conjugacy of Cartan subalgebras in finite-dimensional simple Lie algebras over algebraically closed fields. 
The role of $(\gg, \h)$ is now played by a pair $(E,H)$ consisting of a Lie
algebra $E$ and a ``Cartan subalgebra" $H$. There are other Cartan
subalgebras, and the question is whether they are conjugate and, if so, under
the action of which group.

The first example is that of untwisted affine Kac-Moody Lie algebras. Let $R
= k[t^{\pm 1}]$. Then
\begin{equation}\label{KacMoody}
E = \gg \ot_k R  \oplus kc \oplus kd
\end{equation}
and \begin{equation}\label{cartan} H = \mathfrak{h} \ot 1 \oplus kc \oplus
kd.
\end{equation}

The relevant information is as follows. The $k$-Lie algebra $\gg \ot_k R
\oplus kc$ is a central extension (in fact the universal central extension)
of the $k$-Lie algebra $\gg \ot_k R$. The derivation $d$ of $\gg \ot_k R$
corresponds to the degree derivation $t d/dt$ acting on $R$. Finally
$\mathfrak{h}$ is a fixed Cartan subalgebra of $\gg.$ The nature of $H$ is
that it is abelian, it acts $k$-diagonalizably on $E$, and it is maximal with
respect to these properties. Correspondingly, these algebras are called MADs
(Maximal Abelian Diagonalizable) subalgebras. A celebrated theorem of
Peterson and Kac \cite{PK} states that all MADs of $E$ are conjugate (under
the action of a group that they construct which is the analogue of the simply
connected group in the finite-dimensional case). Similar results hold for the
twisted affine Lie algebras. These algebras are of the form
$$ E = L \oplus kc \oplus kd. $$
The Lie algebra $L$ is a loop algebra $L = L(\gg, \si)$  for some finite
order automorphism $\si$ of $\gg$ (see \ref{ssec:rev-lt} below for details).
If $\si$ is the identity, we are in the untwisted case. The ring $R$ can be
recovered as the centroid of $L$.

Extended affine Lie algebras can be thought of as multi-variable
generalizations of finite-dimensional simple Lie algebras and affine
Kac-Moody algebras. For example, taking $R=k[t_1^{\pm 1}, \ldots, t_n^{\pm
1}]$ in \eqref{KacMoody} and increasing $kc$ and $kd$ correspondingly leads
to toroidal algebras, an important class of examples of EALAs. But as is
already the case for affine Kac-Moody algebras, there are many interesting
examples where $\g\ot_k R$ is replaced by a more general algebra, a so-called
Lie torus (see \ref{def:lietor}).

In the EALA set up, the Lie algebras $\gg$ as above are the case of nullity
$n = 0$, while the affine Lie algebras are the case of nullity $n = 1$. In
higher nullity $n$ we have $R = k[t_1^{\pm 1},\ldots,t_\ell^{\pm 1}]$ for
some $\ell \leq n,$ where again $R$ is the centroid of the centreless core
$E_{cc}$ of the given EALA. Most of our work will concentrate in the case
when $\ell = n$. In this situation $E_{cc}$ is finitely generated as a module
over the centroid $R$ (called the {\it fgc  condition} in EALA theory). We
hasten to add that the  non-fgc algebras are fully understood and classified
(see \ref{lietor-prop} below), but it is presently not known if our conjugacy
theorem holds in this case. The crucial result about the fgc case is that
$E_{cc}$ is necessarily a multiloop algebra, hence a twisted form of $\gg
\ot_k R$ for some (unique) $\gg$. This allows methods from Galois cohomology
to be used in the study of the algebras under consideration (all of this,
with suitable references, will be explained in the main text).

Part of the properties of an EALA $(E,H)$ is a root space decomposition:
$E=\bigoplus_{\al \in \Psi} E_\al$ with $E_0 = H$. The ``root system" $\Psi$
is an example of an extended affine root system.
The main question, of course, is whether $\Psi$ is an invariant of $E$. In
other words, if $H'$ is a subalgebra of $E$ for which the pair $(E, H')$ is
given an EALA structure, is the resulting root system $\Psi'$ isomorphic (in
the sense of [extended affine] root systems) to $\Psi$? That this is true
follows immediately from the main result of our paper.

\begin{theorem}[Theorem {\ref{main-res}}]\label{main} Let $(E,H)$ be an extended affine Lie algebra of fgc type. Assume $E$ admits the second structure $(E,H')$ of an extended affine Lie algebra.
Then $H$ and $H'$ are conjugate, i.e., there exists a $k$-linear automorphism
$f$ of the Lie algebra $E$ such that $f(H)=H'$.
\end{theorem}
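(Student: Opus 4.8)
The plan is to pass to the centreless core, settle conjugacy there by Galois-cohomological means, and then lift. The starting point is that the core $E_c=[E,E]$, its centre $Z(E_c)$, and hence the centreless core $E_{cc}=E_c/Z(E_c)$ are intrinsic to the Lie algebra $E$: they make no reference to $H$ and so are shared by both EALA structures. Each of the structures $(E,H)$ and $(E,H')$ induces on this fixed $E_{cc}$ the structure of a centreless Lie torus, graded by $Q\times\Lambda$ with $\Lambda=\bbZ^n$, and the root system $\Psi$ is read off from this grading. Under the projection $E_c\to E_{cc}$ the MAD $H$ splits, up to the EALA axioms, into three pieces: a maximal grading (toral) subalgebra $\h$ of $E_{cc}$, a central part inside $Z(E_c)$, and the span $D$ of the degree derivations $d_1,\dots,d_n$ implementing the $\Lambda$-grading; likewise for $H'$. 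So the first reduction is to conjugate the grading data attached to $\h$ and $\h'$, and then to propagate this back up to $E$.

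Next I would reformulate the $E_{cc}$-level problem in the language of group schemes. Because $E$ is of fgc type, $E_{cc}$ is a multiloop algebra, i.e.\ a twisted form of $\gg\ot_k R$ with $R=k[t_1^{\pm1},\dots,t_n^{\pm1}]$ its centroid; this is precisely the input that makes Galois cohomology available. The toral subalgebra $\h\subset E_{cc}$ cutting out the grading corresponds to a maximal split torus $S$ of the automorphism group scheme $\bAut(E_{cc})$, and similarly $\h'$ yields $S'$; the degree derivations $d_i$ encode the centroidal (external) $\Lambda$-grading, so one must track both the ``internal'' root torus and the ``external'' centroidal torus simultaneously. Conjugacy of $H$ and $H'$ then reduces---modulo the lifting step---to conjugacy of $S$ and $S'$ under the subgroup of $\Aut_k(E_{cc})$ generated by the relevant unipotent elements.

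The heart of the argument, and the step I expect to be hardest, is this conjugacy of maximal split tori over $R$. After base change along a suitable finite Galois cover of $\Spec R$ the group $\bAut(E_{cc})$ becomes reductive and the twist is classified by a loop torsor; conjugacy of $S$ and $S'$ is then equivalent to the triviality of a class in a suitable $H^1$ over $R=k[t_1^{\pm1},\dots,t_n^{\pm1}]$ measuring the obstruction to carrying one torus to the other. Controlling this $H^1$ is exactly where the fgc hypothesis and the arithmetic of the Laurent polynomial ring in several variables enter, via the conjugacy theorem for maximal split tori of reductive group schemes over $R$; the multinullity case $n>1$ is what makes this delicate, since one cannot simply argue valuation-by-valuation as in the affine ($n=1$) situation. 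A genuine subtlety to dispatch along the way is that the two structures may a priori induce different centroidal $\Lambda$-gradings, so one must first match these external gradings (an automorphism-of-$R$, i.e.\ coordinate-change, issue) before the internal tori can be compared.

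Finally I would lift the resulting conjugating automorphism $\bar f\in\Aut_k(E_{cc})$ back to $E$. The point is that an EALA is determined by its centreless core together with the affine (central-extension) datum and the degree derivations, so $\bar f$ extends---after adjusting its action on the centre $Z(E_c)$ and on $D$---to a $k$-linear automorphism $f$ of $E$; here one must check that $\bar f$ respects the invariant form and cocycle well enough for the extension to exist, a manageable but not automatic verification. Tracking the three pieces of $H$ through $f$ then yields $f(H)=H'$, proving the theorem; the asserted invariance of the root system $\Psi$ is an immediate corollary, since $f$ carries the $H$-root space decomposition onto the $H'$-one.
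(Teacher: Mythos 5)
Your overall strategy --- conjugate the images of $H$ and $H'$ in the centreless core by Galois-cohomological means over $R$, then lift to $E$ --- is the same as the paper's, but the proposal has two genuine gaps. The first is small but real: the core is \emph{not} $[E,E]$ in general (indeed $E=[E,E]\oplus D^0$ while $E_c=L\oplus C$, so $[E,E]$ strictly contains $E_c$ whenever $D\ne D^0$), so the invariance of $E_c$ and $E_{cc}$ under a change of EALA structure is not free; the paper has to prove it (Proposition~\ref{prop:ideal-stru} and Corollary~\ref{cores are the same}, via the classification of ideals of $E$ and graded simplicity of the Lie torus).

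The second gap is the serious one: your last sentence, ``tracking the three pieces of $H$ through $f$ then yields $f(H)=H'$,'' hides what is in fact the hardest part of the argument. After conjugating so that $H_{cc}=H'_{cc}$, it is \emph{not} true that $H=H'$, nor is it obvious that they are conjugate: one only knows $H'=H_{cc}\oplus C^0\oplus D'^0$ for some complement $D'^0$ which a priori need not sit inside $C\oplus D^0$. The paper must show (Lemmas~\ref{almost-n}--\ref{deri}) that $D'^0$ is the graph of a linear map $\xi\co D^0\to C^{\ne 0}$, that $\xi$ extends to a map $\psi\co D\to C$ determined by the weight-space decomposition of $C\oplus D$ under the new toral subalgebra, and --- the key point --- that $\psi$ is a \emph{derivation}, so that $l+c+d\mapsto l+(c+\psi(d))+d$ is an automorphism in $\Ker(\rescc)$ carrying $H$ to $H'$ (Proposition~\ref{res-ker} and Theorem~\ref{main-non-fgc}). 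Nothing in your proposal produces this automorphism or even identifies the obstruction it resolves. Relatedly, the lifting step you call ``a manageable but not automatic verification'' occupies Sections 4--6 of the paper: one must embed the twisted EALA into an untwisted one over the cover $S$, lift elementary automorphisms, handle the isogeny $\widetilde{\bG}\to\bG$ by enlarging the base ring, and descend using $H^1\big(\Ga,\Ker(\rescc)\big)=1$; your sketch correctly identifies that compatibility with the form and the cocycles must be checked, but substantially underestimates what is needed to check it.
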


The main idea of the proof is as follows. Just as for the affine algebras, an
EALA $E$ can be written in the form $E = L \oplus C \oplus D$. Unlike the
affine case, starting with $L$ (which is a multiloop algebra given our fgc
assumption), one can construct an infinite number of $E's$. The exact nature
of all possible $C$ and $D$, and what the resulting Lie algebra structure is,
has been described in works by one of the authors (Neher). For the reader's
convenience  we will recall this construction below. By the main result of
\cite{CGP} one knows that conjugacy holds for $L$. The challenge, which is
far from trivial, is to ``lift" this conjugacy to $E$. It worth noting that
\cite{PK} proceeds to some extend in the opposite direction. They establish
conjugacy ``upstairs", i.e. for $E$, and use this to obtain conjugacy
``downstairs", i.e. for $L$. It is also worth emphasizing that in the affine
case, the most important and useful result is conjugacy upstairs. The same
consideration applies to EALAs.

Built into the EALA definition is the existence of a certain ideal, the
so-called core $E_c$ of an EALA $(E,H)$. For example, for $E$ as in
\eqref{KacMoody} we have $E_c = \g \ot_k R \oplus kc$, while in the
realization $E=L \oplus C \oplus D$ of above the core is $E_c = L \oplus C$.
An important step in our proof of Theorem~\ref{main} is to show in
Corollary~\ref{cores are the same} that the cores of two EALA structures on
$E$ are the same, not only isomorphic. It then follows immediately that the
core $E_c$ of an EALA $(E,H)$ is stable under automorphisms of $E$
(Proposition~\ref{automorphism gives another EALA}). These new structural
results are true for any, not necessarily fgc EALA.

A priori, it is not clear at all that conjucacy at the level of the
centreless core can be ``lifted" to the EALA. As a rehearsal to get insight
into the difficulties that this question poses it is natural to look at the
case of EALAs of nullity 1, which are precisely the affine Kac-Moody Lie
algebras. This is the content of \cite{CGPY}. The positive answer on nullity
1 motivated us to try to tackle the general case, which resulted in the
present work. It is worth mentioning that the methods needed to establish the
general case are far more delicate than those used in \cite{CGPY}.

\medskip
{\bf Notation}: We suppose throughout that $k$ is a field of characteristic
$0$. Starting with section \S\ref{sec:subEALA} we assume that $k$ is
algebraically closed.
For convenience $\ot = \ot_k.$


\section{Some general results}

Some of the key results needed later to establish our main theorem are true
and easier to prove in a more general setting. This is the purpose of this
section.

Throughout $L$ will denote a Lie algebra over  $k$.

\subsection{Cohomology}\label{cohom} Let $V$ be an $L$-module. We denote by $\rmZ^2(L, V)$ the $k$-space of $2$-cocycles of $L$ with coefficients in $V$. Its elements consist of alternating maps $\si\co L \times L \to V$ satisfying the cocycle condition $(l_i \in L)$
\begin{equation}\label{cohomo1}
\begin{split}
  & l_1 \cdot \si(l_2, l_3) + l_2 \cdot \si(l_3, l_1) + l_3 \cdot \si(l_1, l_2)
  \\ &\qquad = \si([l_1, l_2], l_3) + \si([l_2, l_3], l_1) + \si([l_3, l_1], l_2) \end{split}  \end{equation}
Given such a $2$-cocycle $\si$, the vector space $L \oplus V$ becomes a Lie
algebra with respect to the product
\[
  [l_1 + v_1, \, l_2  + v_2] = [l_1, l_2]_L + \big( l_1 \cdot v_2 - l_2 \cdot v_1 + \si(l_1, l_2) \big)
 \]
We will denote this Lie algebra by $L \oplus_\si V$. Note that the projection
onto the first factor $\pr_L \co L \oplus_\si V \to L$ is an epimorphism of
Lie algebras whose kernel is the abelian ideal $V$. Note that $L$ is not
necessarily a subalgebra of $L \oplus_\si V$.

A special case of this construction is the situation when $V$ is a trivial
$L$-module. In this case a $2$-cocycle will be called a {\em central
$2$-cocycle\/}. Note that all terms on the left hand side of \eqref{cohomo1}
vanish. For a central $2$-cocycle, $V$ is a central ideal of $L \oplus_\si V$
and $\pr_L \co L \oplus_\si V \to L$ is a central extension.

\subsection{Invariant bilinear forms}\label{gen:ibf} A bilinear form $\be \co L \times L \to k$ is {\em invariant} if $\be([l_1, l_2], l_3) = \be(l_1, [l_2, l_3])$ holds for all $l_i \in L$.

Let $\g$ be a finite-dimensional split simple Lie algebra with Killing form
$\ka$. Let $R\in \kalg$. For any linear form $\vphi \co R \to k$, i.e., an
element of $R^*$, we obtain an invariant bilinear form $\inpr$ of the Lie
algebra $\g \ot_k R$ by $(x \ot r \mid y \ot s) = \ka(x,y) \, \vphi(rs)$. We
mention that every invariant bilinear form of $\g \ot_k R$ is obtained in
this way for a unique $\vphi \in R^*$ (see Cor. 6.2 of \cite{NPPS}).

\subsection{Central $2$-cocycles and invariant bilinear forms} \label{cen-ibf}
Assume our Lie algebra $L$ comes equipped  with an invariant bilinear form
$\inpr$.  We denote by $\Der_k(L)$ the Lie algebra of derivations of $L$ and
by $\SDer(L)$ the subalgebra of skew derivations, i.e., those derivations $d$
satisfying $\big(d (l) \mid l\big) = 0$ for all $l\in L$. Let $D$ be a
subalgebra of $\SDer (L)$ and denote by $D^*= \Hom_k(D, k)$ its dual space.
It is well-known and easy to check that then $\si_D \co L \times L \to D^*$
defined by
\begin{equation}\label{eala-cons0}
  \si_D\big( l_1, \, l_2)\, (d) = \big( d(l_1) \mid l_2)
\end{equation}
is a central $2$-cocycle. We have not included the dependence of $\si_D$ on
$\inpr$ in our notation since later on the bilinear form $\inpr$ will be
unique up to a scalar and hence the cocycles defined by different forms also
differ only by a scalar, see Remark~\ref{rem:scalar}.

\subsection{A general construction of Lie algebras}\label{gen-data} We consider the following data:
\begin{enumerate}[(i)]
  \item Two Lie algebras $L$ and $D$;
   \item an action of $D$ on $L$ by derivations of $L$, written as $d\cdot
       l$ or sometimes also as $d(l)$ for $d\in D$, $l\in L$ (thus $[d_1,
       d_2] \cdot l = d_1 \cdot (d_2  \cdot l) - d_2\cdot (d_1 \cdot l)$
       and $d\cdot [l_1, l_2] = [d\cdot l_1, l_2] + [l_1, d \cdot l_2]$ for
       $d,d_i \in D$ and $l, l_i \in L$);
  \item a vector space $V$ which is a $D$-module and which will also be
      considered as a trivial $L$-module;
  \item a central $2$-cocycle $\si \co L \times L \to V$ and a $2$-cocycle
      $\ta \co D \times D \to V$.
\end{enumerate}
Given these data, we define a product on \[ E= L \oplus V \oplus D\] by
 ($v_i \in V$, $l_i \in L$, and $d_i \in D$)
   \begin{equation} \label{n:gencons3} \begin{split}
    [ l_1 + v_1 + d_1, \, l_2 + v_2 + d_2 ]
   & =
   \big( [l_1, l_2]_L + d_1 \cdot l_2 - d_2 \cdot l_1 \big)
   \\&\quad  +
     \big(\si(l_1, l_2) +d_1 \cdot v_2 - d_2 \cdot v_1
      + \ta(d_1, d_2) \big)
   \\&\quad
     + [d_1, d_2]_D.
 \end{split} \end{equation}
Here $[.,.]_L$ and $[.,.]_D$ are the Lie algebra products of $L$ and $D$
respectively. To avoid any possible confusion we will sometimes denote the
product of $E$ by $[.,.]_E$.

\begin{proposition}\label{gen-constr}
The algebra $E$ defined in \eqref{n:gencons3} is a Lie algebra.\end{proposition}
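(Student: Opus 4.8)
The plan is to verify that the bracket in \eqref{n:gencons3} makes $E = L \oplus V \oplus D$ into a Lie algebra by checking the two defining axioms: alternating (equivalently, skew-symmetry since $\Char k = 0$) and the Jacobi identity. Skew-symmetry is the easy part and I would dispose of it first. Inspecting each of the three homogeneous components of the bracket, the $L$-component $[l_1,l_2]_L + d_1\cdot l_2 - d_2\cdot l_1$ is visibly skew under exchanging the two arguments, the $D$-component $[d_1,d_2]_D$ is skew, and the $V$-component $\si(l_1,l_2) + d_1\cdot v_2 - d_2\cdot v_1 + \ta(d_1,d_2)$ is skew because $\si$ and $\ta$ are alternating by hypothesis (they are $2$-cocycles, hence alternating maps) and the cross terms $d_1\cdot v_2 - d_2\cdot v_1$ change sign. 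So $[x,x]=0$ for all $x\in E$.

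The substance is the Jacobi identity, which I would prove by the standard bookkeeping device of expanding $[[x_1,x_2],x_3]$ and summing over the three cyclic permutations of $(x_1,x_2,x_3)$, then checking that the total vanishes componentwise in $L$, in $D$, and in $V$. Because the bracket is trilinear, it suffices to verify Jacobi on homogeneous elements, i.e.\ when each $x_i$ lies in one of $L$, $V$, or $D$; but rather than split into cases I would keep general elements $x_i = l_i + v_i + d_i$ and organize the computation by target component. The $D$-component of the cyclic sum is just the Jacobi identity for $[.,.]_D$, and hence vanishes. The $L$-component collects the Jacobi identity for $[.,.]_L$ together with the terms coming from the $D$-action; here the two hypotheses in \ref{gen-data}(ii)---that $D$ acts by derivations of $L$ and that the map $D\to\Der_k(L)$ is a Lie algebra homomorphism (encoded in $[d_1,d_2]\cdot l = d_1\cdot(d_2\cdot l) - d_2\cdot(d_1\cdot l)$)---are exactly what is needed to cancel everything.

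The genuinely delicate part, and the step I expect to be the main obstacle, is the vanishing of the $V$-component of the cyclic sum. This component gathers four kinds of contributions: the cocycle terms $\sum_{\mathrm{cyc}}\si([l_1,l_2]_L, l_3)$ and $\sum_{\mathrm{cyc}}\ta([d_1,d_2]_D, d_3)$; the mixed terms produced when the $D$-action hits a $\si$-value or a $V$-vector, such as $\sum_{\mathrm{cyc}} d_1\cdot\si(l_2,l_3)$ and $\sum_{\mathrm{cyc}} d_1\cdot(d_2\cdot v_3 - d_3\cdot v_2)$; and the cross terms $\pm\,\si(d_i\cdot l_j, l_k)$ arising because the $L$-bracket of two elements feeds into $\si$. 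The plan is to match these up against the two cocycle conditions. The purely-$D$ terms vanish by the $2$-cocycle condition \eqref{cohomo1} for $\ta$ (with $D$ acting on $V$), and the purely $v$-terms $\sum_{\mathrm{cyc}} d_1\cdot(d_2\cdot v_3 - d_3\cdot v_2)$ vanish because $V$ is a $D$-module. The subtle cancellation is among the $\si$-terms: the cross terms $\si(d_i\cdot l_j, l_k)$ must combine with the action terms $d_i\cdot\si(l_j,l_k)$, and one uses that $\si$ is a \emph{central} $2$-cocycle (so the left-hand side of \eqref{cohomo1} is absent) together with the compatibility that each $d\in D$ acts on $L$ by a derivation. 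Keeping careful track of signs across the six cyclic $\si$-terms and the six cross terms is where the real work lies; once these are seen to cancel in pairs, the $V$-component vanishes and Jacobi is established, completing the proof that $E$ is a Lie algebra.
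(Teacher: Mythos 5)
Your overall strategy coincides with the paper's: show the bracket is alternating and then verify Jacobi by brute-force expansion. The paper organizes the expansion by homogeneous cases $J(X,Y,Z)$ with $X,Y,Z\in\{L,V,D\}$ and disposes of all cases with no $D$-entry in one stroke by citing the fact from \ref{cohom} that $L\oplus_\si V$ is already a Lie algebra; your bookkeeping by target component is an equivalent repackaging of the same computation, so there is no methodological difference worth dwelling on.

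There is, however, a gap at precisely the spot you identify as ``where the real work lies''. The cancellation of the cross terms $\si(d\cdot l_1,l_2)+\si(l_1,d\cdot l_2)$ against the action term $d\cdot\si(l_1,l_2)$ --- this is the $V$-component of $J(d,l_1,l_2)$ --- does \emph{not} follow from ``$\si$ is a central $2$-cocycle'' plus ``$D$ acts on $L$ by derivations''. Centrality of $\si$ kills $\sum_{\mathrm{cyc}}\si([l_1,l_2]_L,l_3)$, and the derivation property kills the $L$-component of $J(D,L,L)$, but the identity $d\cdot\si(l_1,l_2)=\si(d\cdot l_1,l_2)+\si(l_1,d\cdot l_2)$ is an independent compatibility between $\si$ and the two $D$-actions which appears nowhere among the data (i)--(iv) of \ref{gen-data}. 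Concretely, take $L=kx\oplus ky$ abelian, $V=k$ a trivial $L$- and $D$-module with $\si(x,y)=1$ (so $L\oplus_\si V$ is the Heisenberg algebra), and let $D=kd$ act on $L$ by $d\cdot x=x$, $d\cdot y=0$; then $J(d,x,y)=\si(x,y)-d\cdot\si(x,y)=1\neq 0$, so the step as you justify it fails. To be fair, the paper's own proof handles this case with the equally terse ``$J(D,L,L)=0$ since $D$ acts on $L$ by derivations'', so the lacuna is inherited from the source. In every instance the construction is actually applied to, $\si=\si_D$ is given by \eqref{eala-cons0} with $D$ consisting of skew derivations acting contragrediently on $C\subset D^*$, and there the compatibility is a one-line check: $\si_D(d\cdot l_1,l_2)(d')+\si_D(l_1,d\cdot l_2)(d')=\big(d'd(l_1)\mid l_2\big)-\big(dd'(l_1)\mid l_2\big)=-\si_D(l_1,l_2)([d,d'])=\big(d\cdot\si_D(l_1,l_2)\big)(d')$. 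Make that hypothesis explicit (or restrict to such $\si$) and your argument goes through.
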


We will henceforth denote this Lie algebra $(L, \si,\tau)$.

\begin{proof} The product is evidently alternating. For $e_i \in E$ let $J(e_1, e_2, e_3) = \big[ [e_1,e_2]\, e_3 \big] + \big[ [e_2,e_3]\, e_1 \big] +  \big[ [e_3,e_1]\, e_2 \big]$ for $e_i \in E$. That $J(E,E,E)=0$ follows from tri-linearity of $J$ and the following special cases:
$J(D,D,D)=0$ since $D$ is a Lie algebra and $\ta$ is a $2$-cocycle; $J(D,D,L)
= 0$ since $L$ is a $D$-module; $J(D,D,V) = 0$ since $V$ is a $D$-module;
$J(D,V,V)=0 = J(D,L,V)$ since all terms vanish by definition
\eqref{n:gencons3}; $J(D,L,L)=0$ since $D$ acts on $L$ by derivations;
$J(L\oplus V, L \oplus V, L \oplus V)=0$ since $L\oplus_\si V$ is a Lie
algebra by \ref{cohom}.
\end{proof}

We will later use this construction for different data. For example, it is
the standard construction of an EALA as reviewed in \S\ref{sec:review}.

One of the central themes of this paper is to extend automorphisms from the
Lie algebra $L$ to the Lie algebra $E=(L, \si,\tau)$. Recall that the
elementary automorphism group $\EAut(M)$ of a Lie $k$-algebra $M$ is by
definition the subgroup of $\Aut_k(M)$ generated by the automorphisms $\exp
(\ad_M x)$ for $\ad_M x$ a nilpotent derivation. Clearly, any elementary
automorphism is $\Ctd_k(M)$-linear, where here and below $\Ctd_k$ denotes the
centroid of a  $k$-algebra.\footnote{We recall that for  an arbitrary
$k$-algebra $A$, $\Ctd_k(A) = \{ \chi \in \End_k(A) : \chi(ab) = \chi(a)b =
a\chi(b) \, \forall \, a,b \in A \}$. The space $A$ is naturally a left
$\Ctd_k(A)$-module via $\chi \cdot a = \chi(a).$ If $\Ctd_k(A)$ is
commutative, for example if $A$ is perfect, the above action endows $A$ with
an algebra structure over $\Ctd_k(A)$. The reader may refer to \cite{bn} for
general facts about centroids.}

\begin{proposition}\label{li-elem}  Every elementary automorphism $f$ of $L$ lifts to an elementary automorphism $\tilde f$ of $E=(L,\si,\ta)$ with the following properties: \begin{enumerate}[\rm (i)]
  \item $\tilde f(L) \subset L \oplus V$; the $L$-component of $\tl f|_L$ is $f$, i.e., $\pr_L \circ \tl f|_L =f$.
  \item $\tl f(V) \subset V.$ In fact $\tl f|_V = \Id_V$.

  \item For $d\in D$ the $D$-component of $\tl f (d)\in E$ is $d$, i.e., $\tilde{f}(d) = d + x_{f,d}$ for some $x_{f,d} \in L \oplus V$.
   \end{enumerate}
\end{proposition}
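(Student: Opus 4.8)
The plan is to reduce to the generators of $\EAut(L)$ and lift each one separately. Write $f = \exp(\ad_L x_1)\circ\cdots\circ\exp(\ad_L x_m)$ with finitely many $x_i\in L$ for which $\ad_L x_i$ is nilpotent; this is possible because the inverse of a generator is again a generator, as $\exp(\ad_L x)^{-1}=\exp(\ad_L(-x))$. Viewing each $x_i$ inside $E$ through the embedding $l\mapsto l+0+0$, I would take as candidate lift $\tl f=\exp(\ad_E x_1)\circ\cdots\circ\exp(\ad_E x_m)$. Everything then comes down to a single generator: for $x\in L\subset E$ with $\ad_L x$ nilpotent, one must show that $\ad_E x$ is a nilpotent derivation of $E$ and that $g:=\exp(\ad_E x)$ satisfies (i)--(iii) with $\pr_L\circ g|_L=\exp(\ad_L x)$.

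For this I would compute $\de:=\ad_E x$ on the three summands directly from \eqref{n:gencons3}. A short calculation gives, for $l\in L$, $v\in V$ and $d\in D$,
\[ \de(l)=(\ad_L x)(l)+\si(x,l),\qquad \de(v)=0,\qquad \de(d)=-\,d\cdot x\in L. \]
Hence $\de$ is ``block triangular'' for $E=L\oplus V\oplus D$: it annihilates $V$, sends $D$ into $L$, and preserves $L\oplus V$ with $L$-component $\ad_L x$ and a $V$-valued correction. An immediate induction yields $\de^n(l)=(\ad_L x)^n(l)+\si\big(x,(\ad_L x)^{n-1}(l)\big)$ for $n\geq 1$, so that if $(\ad_L x)^N=0$ then $\de^{N+1}|_L=0$; combined with $\de|_V=0$ and $\de(D)\subset L$ this forces $\de^{N+2}=0$. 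Thus $\ad_E x$ is nilpotent and $g=\exp(\ad_E x)$ is an elementary automorphism of $E$.

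Reading the exponential series off these formulas then gives (i)--(iii) for $g$. Since $\de|_V=0$ we have $g|_V=\Id_V$, which is (ii). For $l\in L$ the $L$-components of the terms $\de^n(l)$ sum to $\exp(\ad_L x)(l)$ while the remaining contributions lie in $V$, so $g(L)\subset L\oplus V$ and $\pr_L\circ g|_L=\exp(\ad_L x)$, which is (i). For $d\in D$ every higher term $\de^n(d)$ with $n\geq 1$ lies in $L\oplus V$, whence $g(d)=d+x_{g,d}$ with $x_{g,d}\in L\oplus V$, which is (iii).

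Finally I would verify that conditions (i)--(iii) and the relation $\pr_L\circ g|_L=\exp(\ad_L x)$ are stable under composition, so that the product $\tl f$ lifts $f$ and inherits all three properties. This is bookkeeping: (ii) gives $g(V)=V$, and together (i)--(ii) give $g(L\oplus V)\subset L\oplus V$, which is precisely what is needed to propagate the three conditions through a composite and to multiply the $L$-components to $\pr_L\circ\tl f|_L=f$. I expect the genuinely delicate point to be the nilpotency of $\ad_E x$ in the second step; the rest is routine manipulation of the explicit formulas for $\de$.
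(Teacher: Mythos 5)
Your proposal is correct and follows essentially the same route as the paper: both compute $\ad_E x$ explicitly from the product formula \eqref{n:gencons3}, observe that its image lies in $L\oplus V$, deduce nilpotency of $\ad_E x$ from that of $\ad_L x$ by the same induction, and then read properties (i)--(iii) off the exponential series. The only difference is one of presentation --- the paper treats a general element $e=l+v+d$ at once and leaves the reduction to generators and the stability of (i)--(iii) under composition implicit, whereas you spell those bookkeeping steps out.
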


\begin{proof}
  Let $x\in L$ and denote by $\ad_L x$ and $\ad_E x$ the corresponding inner derivation of $L$ and $E$ respectively. We let $e=l + v + d \in E$ be an arbitrary element of $E$ with the obvious notation. Then
  \[ (\ad_E x)(e) = \big([x,l]_L - d \cdot x \big) + \si(x,l) \in L \oplus V.\]
  Putting $e_1 = [x,l] -d \cdot x$,  an easy induction shows that
  \[ (\ad_E x)^n(e) = (\ad_L x)^{n-1}(e_1) + \si\big( x, (\ad_L x)^{n-2}(e_1)\big) \in L \oplus V, \quad n\ge 2.\]
  In particular, if $\ad_L x$ is nilpotent then so is $\ad_E x$. Assuming this to be the case, it is immediate from the product formula \eqref{n:gencons3} that (i)--(iii) hold for $\tl f = \exp (\ad_E x)$. 
   \end{proof}

\section{Review: Lie tori and EALAs} \label{sec:review}

\subsection{Lie tori} \label{def:lietor}

In this paper the term ``root system'' means a finite, not necessarily
reduced root system $\Delta$ in the usual sense, except that we will assume
$0 \in \Delta$, as for example in \cite{AABGP}. We denote by $\Delta\ind = \{
0 \} \cup \{ \alpha\in \Delta: \frac{1}{2} \alpha \not\in \Delta\}$ the
subsystem of indivisible roots and by $\euQ(\Delta)=\Span_\Z(\Delta)$ the
root lattice of $\Delta$. To avoid some degeneracies we will always assume
that $\Delta\ne \{0\}$.

\smallskip Let $\Delta$ be a finite irreducible root system, and let $\La$ be an abelian
group. A \textit{Lie torus of type $(\Delta,\La)$\/} is a Lie algebra $L$
satisfying the following conditions (LT1) -- (LT4). \smallskip
\begin{itemize}

\item[(LT1)] (a) $L$ is graded by $\euQ(\Delta) \oplus \La$. We write this
    grading as $L = \bigoplus_{\alpha \in \euQ(\Delta), \la \in \La}
    L_\alpha^\la$ and thus have $[L_\alpha^\la, L_\beta^\mu] \subset L^{\la
    + \mu}_{\alpha + \beta}$. It is convenient to define \[ L_\alpha = \ts
    \bigoplus_{\la \in \La} L_\alpha^\la \quad \hbox{and}\quad L^\la =
    \bigoplus_{\alpha \in \euQ(\Delta)} L_\alpha^\la.\]
     (b) We further assume that $\supp_{\euQ(\Delta)} L = \{ \alpha \in
    \euQ(\Delta); L_\alpha \ne 0\} = \Delta$, so that $L =
    \bigoplus_{\alpha \in \Delta} L_\alpha$.

\item[(LT2)] (a) If $L_\alpha^\la \ne 0$ and $\alpha \ne 0$, then there
    exist $e_\alpha^\la \in L_\alpha^\la$ and $f_\alpha^\la \in
    L_{-\alpha}^{-\la}$ such
    that  
     \[  L_\alpha^\la = k e_\alpha^\la, \quad L_{-\alpha}^{-\la} = k f_\alpha^\la,
                \]
and \[ [[e_\alpha^\la, f_\alpha^\la],\, x_\beta] = \lan \beta,
\alpha\ch\ran x_\beta \]
 for all $\beta \in \Delta$ and $x_\beta \in L_\beta .$\footnote{Here and
 elsewhere $\alpha\ch$ denotes the coroot corresponding to $\alpha$ in the
 sense of \cite{Bbk}.}

(b) $L_\alpha^0 \ne 0$ for all $0 \ne \alpha \in \Delta\ind.$ \smallskip

\item[(LT3)]  As a Lie algebra, $L$ is generated by $\bigcup_{0\ne \alpha
    \in \Delta} L_\alpha$. \smallskip

\item[(LT4)] As an abelian group, $\La$ is generated by $\supp_\La L = \{
    \la \in \La : L^\la \ne 0\}$.
\end{itemize} \smallskip

We define the {\em nullity} of a Lie torus $L$ of type $(\Delta,\La)$ as the
rank of $\La$ and the {\em root-grading type\/} as the type of $\Delta$. We
will say that $L$ is a \emph{Lie torus} (without qualifiers) if $L$ is a Lie
torus of type $(\Delta,\La)$ for some pair $(\Delta,\La)$. A Lie torus is
called \textit{centreless\/} if its centre $\scZ(L) = \{0\}$. If $L$ is an
arbitrary  Lie torus, its centre $\scZ(L)$ is contained in $L_0$ from which
it easily follows that $L/\scZ(L)$ is in a natural way a centreless Lie torus
of the same type as $L$ and nullity (see \cite[Lemma~1.4]{y:lie}).

An obvious example of a Lie torus of type $(\Delta,\Z^n)$ is the Lie
$k$-algebra $\g \ot R$ where $\g$ is a finite-dimensional split simple Lie
algebra of type $\Delta$ and $R=k[t_1^{\pm 1}, \ldots, t_n^{\pm 1}]$ is the
Laurent polynomial ring in $n$-variables with coefficients in $k$ equipped
with the natural $\Z^n$-grading. Another important example, studied in
\cite{bgk},  is $\lsl_l(k_q)$ for $k_q$ a quantum torus.

Lie tori have been classified, see \cite{Al} for a recent survey of the many
papers involved in this classification.  Some more  background  on Lie tori
is contained in the papers \cite{abfp2,n:persp, n:eala-summ}.

\subsection{Some known properties of centreless Lie tori}\label{lietor-prop}
We review the properties of Lie tori used in our present work. This is not a
comprehensive survey. The reader can find more information in
\cite{abfp2,n:persp,n:eala-summ}. We assume that $L$ is a centreless Lie
torus of type $(\Delta,\La)$ and nullity $n$. \smallskip

For $e^\la_\alpha$ and $f_\alpha^\la$ as in (LT2) we put $h_\alpha^\la =
[e_\alpha^\la, f_\alpha^\la]\in L_0^0$ and observe that $(e_\alpha^\la,
h_\alpha^\la, f_\alpha^\la)$ is an $\lsl_2$-triple. Then \begin{equation}
\label{def:h}
   \frh = \Span_k \{ h^\la_\alpha\} = L_0^0\end{equation}   is a toral
\footnote{A subalgebra $T$ of a Lie algebra $L$ is toral, sometimes also
called $\ad$-diagonalizable, if $L = \bigoplus_{\al\in T^*} L_\al(T)$ for
$L_\al(T) = \{ l \in L : [t,l] = \al(t)l \hbox{ for all $t\in T$}\}$. In this
case $\{ \ad t : t\in T\}$ is a commuting family of $\ad$-diagonalizable
endomorphisms. Conversely, if $\{ \ad t : t\in T\}$ is a commuting family of
$\ad$-diagonalizable endomorphisms and $T$ is a finite-dimensional
subalgebra, then $T$ is a toral.} subalgebra of $L$ whose root
spaces are the $L_\alpha$, $\alpha \in \Delta$.

 Up to scalars, $L$ has a unique nondegenerate symmetric
bilinear form $\inpr$ which is $\La$-graded in the sense that $(L^\la \mid
L^\mu) = 0$ if $\la + \mu \ne 0$, \cite{NPPS, y:lie}. Since the subspaces
$L_\alpha$ are the root spaces of the toral subalgebra $\frh$ we also know
$(L_\alpha \mid L_\ta) = 0$ if $\alpha + \ta \ne 0$. \smallskip

The centroid $\Ctd_k(L)$ of $L$ is isomorphic to the group ring $k[\Xi]$ for
a subgroup $\Xi$ of $\La$, the so-called {\em central grading
group}.\footnote{In \cite{n:persp} the central grading group is denoted by
$\Ga.$ We will reserve this notation for the Galois group of an extension
$S/R$ which is prominently used later in our work.} Hence $\Ctd_k(L)$ is a
Laurent polynomial ring in $\nu$ variables, $0 \le \nu \le n$,
(\cite[7]{n:tori}, \cite[Prop.~3.13]{bn}). (All possibilities for $\nu$ do in
fact occur.) We can thus write $\Ctd_k(L)= \bigoplus_{\xi \in \Xi} k
\chi^\xi$, where the $\chi^\xi$ satisfy the multiplication rule $\chi^\xi
\chi^\de = \chi^{\xi + \de}$ and act on $L$ as endomorphisms of $\La$-degree
$\xi$. \smallskip

$L$ is a prime Lie algebra, whence $\Ctd_k(L)$ acts without torsion on $L$
(\cite[Prop.~4.1]{Al}, \cite[7]{n:tori}). As a $\Ctd_k(L)$-module, $L$ is
free. If $L$ is fgc, namely finitely generated as a module over its centroid,
then $L$ is a multiloop algebra \cite{abfp2}. \smallskip

If $L$ is not fgc, equivalently $\nu < n$, one knows (\cite[Th.~7]{n:tori})
that $L$ has root-grading type ${\rm A}$. Lie tori with this root-grading
type are classified in \cite{bgk,bgkn,y1}. It follows from this
classification together with \cite[4.9]{ny} that $L\simeq \lsl_l(k_q)$ for
$k_q$ a quantum torus in $n$ variables and $q=(q_{ij})$ an $n\times n$
quantum matrix with at least one $q_{ij}$ not a root of unity.
\smallskip

Any $\theta \in \Hom_\Z(\La, k)$ induces a so-called {\em degree derivation}
$\pa_\theta$ of $L$ defined by $\pa_\theta (l^\la) = \theta(\la) l^\la$ for
$l^\la \in L^\la$. We put $\euD = \{ \pa_\theta: \theta \in \Hom_\Z(\La, k)
\}$ and note that $\theta \mapsto \pa_\theta$ is a vector space isomorphism
from $\Hom_\Z(\La, k)$ to $\euD$, whence $\euD\simeq k^n$. We define $\ev_\la
\in \euD^*$ by $\ev_\la(\pa_\theta) = \theta(\la)$. One knows
(\cite[8]{n:tori}) that $\euD$ induces the $\La$-grading of $L$ in the sense
that $L^\la= \{ l \in L : \pa_\theta(l) = \ev_\la(\pa_\theta) l \hbox{ for
all } \theta \in \Hom_\Z(\La, k)\}$ holds for all $\la \in \La$.\smallskip

If $\chi \in \Ctd_k(L)$ then $\chi d \in \Der_k(L)$ for any derivation $d\in
\Der_k(L)$. We call
\begin{equation}\label{Volodya1}  \CDer_k(L) := \Ctd_k(L) \euD = \ts \bigoplus_{\ga \in
\Xi} \chi^\xi \euD\end{equation}
the {\em centroidal derivations\/} of $L$. Since
\begin{equation}\label{derbracket}[
\chi^\xi \pa_\theta, \, \chi^\de \pa_\psi ] = \chi^{\xi + \de}( \theta(\de)
\pa_\psi - \psi(\xi) \pa_\theta)
\end{equation}
 it follows that $\CDer(L)$
 is a
$\Xi$-graded subalgebra of $\Der_k(L)$, a  generalized Witt algebra. Note
that $\euD$ is a toral subalgebra of $\CDer_k(L)$ whose root spaces are the
$\chi^\xi \euD = \{ d\in \CDer(L): [t, d] = \ev_\xi(t) d \hbox{ for all }
t\in \euD\}$. One also knows (\cite[9]{n:tori}) that
\begin{equation} \label{lietor-prop1}
\Der_k(L) = \IDer(L) \rtimes \CDer_k(L) \quad (\hbox{semidirect product}).
\end{equation}
For the construction of EALAs, the $\Xi$-graded subalgebra $\SCDer_k(L)$ of
{\em skew-centroidal derivations\/} is important:
\begin{align*}
    \SCDer_k(L) &= \{ d\in \CDer_k(L) : (d \cdot l \mid l) = 0 \hbox{ for all } l\in L\}
  \\ &=  \ts \bigoplus_{\xi \in \Xi } \SCDer_k(L)^\xi,  \\
   \SCDer_k(L)^\xi &=  \chi^\xi \{ \pa_\theta : \theta(\xi) = 0 \}.
\end{align*}
Note $\SCDer_k(L)^0 = \euD$ and $[ \SCDer_k(L)^\xi, \, \SCDer_k(L)^{-\xi}]
=0$, whence
\[\SCDer_k(L) = \euD \ltimes \ts\big( \bigoplus_{\xi \ne 0} \SCDer(L)^\xi \big) \quad
(\hbox{semidirect product}).\footnote{The left-hand side depends a priori
 on the choice of invariant bilinear form on $L$, while the right-hand side does not.
This is as it should be given that the non-degenerate invariant bilinear form is unique up to non-zero scalar.} \]

\subsection{Extended affine Lie algebras (EALAs)} \label{def:eala}
 An \textit{extended affine Lie algebra\/} or EALA
for short, is a triple $\big(E,H, \inpr\big)$ (but see Remark~\ref{comp:def})
consisting of a Lie algebra $E$ over $k$, a subalgebra $H$ of $E$ and a
nondegenerate symmetric invariant bilinear form $\inpr$ satisfying the axioms
(EA1) -- (EA5) below.
\begin{itemize}

\item[(EA1)] $H$ is a nontrivial finite-dimensional toral  and
    self-centra\-li\-zing subalgebra of $E$.
\end{itemize}
Thus $E = \ts\bigoplus_{\al \in H^*} E_\al $ for $E_\al = \{ e\in E: [h,e] =
\al(h)e \hbox{ for all } h\in H\}$ and $E_0 = H$. We denote
     by $\Psi=\{\al \in H^*: E_\al \ne 0\}$ the set of roots of $(E,H)$  --
     note that $0 \in \Psi$! Because the restriction of $\inpr$ to $ H \times H $ is nondegenerate,
     one can in the usual way transfer this bilinear form to $H^*$ and
     then introduce anisotropic roots $\Psi\an= \{ \al \in \Psi : (\al \mid
     \al) \ne 0\}$ and isotropic (= null) roots $\Psi^0 = \{ \al\in \Psi : (\al
     \mid \al) = 0\}$. The \textit{core of $\big(E,H, \inpr \big)$} is by definition the
     subalgebra generated by $\bigcup_{\al \in \Psi\an} E_\al$. It will be henceforth denoted by $E_c.$\smallskip
\begin{itemize}
\item[(EA2)] For every $\al \in \Psi\an$ and $x_\al \in E_\al$, the
    operator $\ad x_\al$ is locally nilpotent on $E$. \smallskip

\item[(EA3)] $\Psi\an$ is connected in the sense that for any decomposition
    $\Psi\an = \Psi_1 \cup \Psi_2$ with $\Psi_1 \neq \emptyset$ and $\Psi_2
    \neq \emptyset$ we have $(\Psi_1 \mid \Psi_2) \neq 0$.
   \smallskip

\item[(EA4)] The centralizer of the core $E_c$ of $E$ is contained in
    $E_c$, i.e., $\{e \in E : [e, E_c] =0 \} \subset E_c$. \smallskip

\item[(EA5)] The subgroup $\La = \Span_\Z(\Psi^0) \subset H^*$ generated by
    $\Psi^0$ in $(H^*,+)$ is a free abelian group of finite rank.
\end{itemize}

The rank of $\La$ is called the \textit{nullity} of $\big(E,H, \inpr \big)$.
Some references for EALAs are \cite{AABGP, bgk, bgkn, Ne4, n:persp,
n:eala-summ}. It is immediate that any finite-dimensional split simple Lie
algebra is an EALA of nullity $0$. The converse is also true,
\cite[Prop.~5.3.24]{n:eala-summ}. It is also known that any affine Kac-Moody
algebra is an EALA -- in fact, by \cite{abgp}, the affine Kac-Moody algebras
are precisely the EALAs of nullity $1$. The core $E_c$ of an EALA is in fact
an ideal.

\begin{remark}\label{comp:def}
In \cite{Ne4, n:persp, n:eala-summ} an EALA is defined as a pair $(E,H)$
consisting of a Lie algebra $E$ and a subalgebra $H\subset E$ satisfying the
axioms (EA1) -- (EA5) of \ref{def:eala} as well as
\begin{enumerate}
  \item[(EA0)] $E$ has an invariant nondegenerate symmetric bilinear form
      $\inpr$.
\end{enumerate}
As we will see in Corollary~\ref{cor:ff} below the choice of the invariant
bilinear form is not important. To be precise, the sets of isotropic and
anisotropic roots, which a priori depend on the form $\inpr$, are actually
independent of the choice of $\inpr$. In other words, two EALAs of the form
$\big(E,H, \inpr\big)$ and $\big(E,H, \inpr' \big)$ have the same $\Psi$
(this is obvious), $\Psi\an$ and $\Psi^0$, and hence also the same core $E_c$
and centreless core $E_{cc}= E_c/Z(E_c)$. The role of $\inpr$ is to show that
$\Psi$ is an extended affine root system (EARS) \cite{AABGP}\footnote{EARS
can be defined without invariant forms \cite[Prop.~5.4, \S5.3]{LN}} and to
pair the dimensions between the homogeneous spaces $C^{\la}$ and $D^{-\la}$,
introduced in \ref{eala-cons}. In fact, as indicated in \cite[\S6]{n:persp},
it is natural to consider more general EALA structure in which the existence
of an invariant form is replaced by the requirement that the set of roots of
$(E,H)$ has a specific structure without changing much the structure of
EALAs.

\end{remark}

\subsection{Isomorphisms of EALAs}\label{def-isom} An {\it isomorphism} between EALAs $\big(E,H, \inpr\big)$ and $\big(E',H', \allowbreak \inpr'\big)$ is a Lie algebra isomorphism $f \co E  \to E'$ that maps $H$ onto $H'$. Any such map induces an isomorphism between the corresponding EARS.

We point out that no assumption is made about the compatibility of the
bilinear forms with the given Lie algebra isomorphism $f \co E \to E'$ . In
particular, $f$ is not assumed to be an isometry up to scalar as in
\cite{AF:isotopy}. There is a good reason for not making this assumption.
While the form is unique on the core $E_c$ up to a scalar, there are many
ways to extend it from $E_c$ to an invariant form on $E$ without changing the
algebra structure. This can already be seen at the example of an affine
Kac-Moody Lie algebra $E$ with the standard choice of $H$ for which there
exists an infinite number of invariant bilinear forms $\inpr$ on $E$ which
are not scalar multiple of each other and such that $\big(E,H, \inpr\big)$ is
an EALA. The isometry up to scalar condition will render all these EALAs
non-isomorphic. Removing this condition yields the equivalence (up to Lie
algebra isomorphism) between the affine Kac-Moody Lie algebras and EALAs of
nullity one (see above).

\subsection{Roots}\label{rev:root} The set $\Psi$ of roots of an EALA $E$ has special properties: It is a so-called
extended affine root system in the sense of \cite[Ch.~I]{AABGP}. We will not
need the precise definition of an extended affine root system or the more
general affine reflection system in this paper and therefore refer the
interested reader to \cite{AABGP} or the surveys \cite[\S2, \S3]{n:persp} and
\cite[\S5.3]{n:eala-summ}. But we need to recall the structure of $\Psi$ as
an affine reflection system: There exists an irreducible root system $\Delta
\subset H^*$, an embedding $\Delta\ind \subset \Psi$ and a family
$(\La_\alpha : \alpha \in \Delta)$ of subsets $\La_\alpha \subset \La$ such
that \begin{equation} \label{root1}
  \Span_k(\Psi) = \Span_k(\Delta) \oplus \Span_k(\La) \quad \hbox{and} \quad
  \Psi = \ts \bigcup_{\alpha \in \Delta} ( \alpha + \La_\alpha).
\end{equation}
Using this (non-unique) decomposition of $\Psi$, we write any $\psi \in \Psi$
as $\psi = \al + \la$ with $\al \in \Delta$ and $\la \in \La_\alpha \subset
\La$ and define $(E_c)_\al^\la = E_c \cap E_\psi$. Then $E_c =
\bigoplus_{\alpha \in \Delta, \la \in \La} (E_c)_\alpha^\la$ is a Lie torus
of type $(\Delta,\La)$. Hence $E_{cc} = E_c/\scZ(E_c)$ is a centreless Lie
torus, called the {\em centreless core of $E_c$}.

\subsection{Construction of EALAs} \label{eala-cons} To construct an EALA one reverses the
process described in \ref{rev:root}. We will use data $(L,\sigma_D,\ta)$
described below. Some background material can be found in \cite[\S6]{n:persp}
and \cite[\S5.5]{n:eala-summ}:
\begin{itemize}

\item $L$ is a centreless Lie torus of type $(\Delta,\La)$. We fix a
    $\La$-graded invariant nondegenerate symmetric bilinear form $\inpr$
    and let $\Xi$ be the central grading group of $L$.

 \item $D=\bigoplus_{\xi \in \Xi} D^\xi$ is a graded subalgebra of $
     \SCDer_k(L)$ such that the evaluation map $\ev_{D^0} : \La \to D^{0\,
     *}$, $\la \to \ev_\la \mid_{D^0}$ is injective. Since $(L^\la \mid
     L^\mu) =0$ if $\la + \mu \ne 0$ and since $D^\xi(L^\la) \subset L^{\xi
     + \la}$ it follows that the central cocycle $\si_D$ of
     \eqref{eala-cons0} has values in the graded dual $D^{\gr *}=:C$ of
     $D$. Recall $C=\bigoplus_{\xi \in \Xi} C^\xi$ with $C^\xi =
     (D^{-\xi})^* \subset D^*$. We also note that the contragredient action
     of $D$ on $D^*$ leaves $C$ invariant. In the following we will always
     use this $D$-action on $C$. In particular, $d\in D^0$ acts on $C^\xi$
     by the scalar $-\ev_\la(d)$.

 \item $\ta : D\times D \to C$ is an \textit{affine cocycle\/} defined to
     be a $2$-cocycle   satisfying for all $d,d_i \in D$ and $d^0 \in D^0$
    \begin{align*} 
      \ta(d^0, d) &= 0, \quad \hbox{and} \quad
        \ta(d_1, d_2)(d_3) = \ta(d_2, d_3)(d_1). 
       \end{align*}
\end{itemize}

It is important to point out that there do exist non-trivial affine cocycles,
see \cite[Rem.~3.71]{bgk}.

\noindent The data $(L,\sigma_D,\ta)$ as above satisfy all the axioms of our
general construction \ref{gen-data} and hence, by \ref{gen-constr}, is a Lie
algebra with respect to the product \eqref{n:gencons3}.\footnote{Strictly
speaking we should write $\EA(L,D, \inpr_L, \ta)$. The effect that different
choice of forms has on the resulting EALA is explained in Remark
\ref{rem:scalar}.} We will denote this Lie algebra by $E$. By construction we
have the decomposition
\begin{equation}\label{eala-cons1} E = L \oplus C \oplus D. \end{equation} Note that $E$ has the toral subalgebra \[H=\frh \oplus C^0\oplus D^0\]
for $\frh$ as in \ref{lietor-prop}. The symmetric bilinear form $\inpr$ on
$E$, defined by
\[ \big( l_1 +  c_1 + d_1 \mid l_2 + c_2 + d_2\big)
  = (l_1 \mid l_2)_L + c_1(d_2) + c_2(d_1), \]
is nondegenerate and invariant. Here $\inpr_L$ is of course our fixed chosen
invariant bilinear form of the Lie torus $L$. We have now indicated part of
the following result.

\begin{theorem}[{\cite[Th.~6]{Ne4}}]\label{n:mainconst} {\rm (a)} The triple
$\big(E,H, \inpr \big)$ constructed above is an extended affine Lie
algebra,\footnote{See Remark \ref{comp:def} above.} denoted $\EA(L,D,\ta)$.
Its core is $L \oplus D^{\gr\, *}$ and its centreless core is $L$. \smallskip

{\rm (b)} Conversely, let $\big(E,H, \inpr \big)$ be an extended affine Lie
algebra, and let $L=E_c/Z(E_c)$ be its centreless core. Then there exists a
subalgebra $D\subset \SCDer_k(L)$ and an affine cocycle $\ta$ satisfying the
conditions in {\rm \ref{eala-cons}} such that $\big(E,H, \inpr \big) \simeq
\EA(L, \inpr_L, D,\ta)$ for some $\Lambda$-graded invariant nondegenerate
bilinear form $\inpr_L$ on $L.$
\end{theorem}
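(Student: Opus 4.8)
The plan is to treat the two parts separately, since (a) is a direct verification of the EALA axioms for the explicit algebra $E = L \oplus C \oplus D$, while (b) is a genuine structure theorem that reverses the construction.

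For part (a) I would first pin down the root-space decomposition of $E$ relative to $H = \frh \oplus C^0 \oplus D^0$. The subalgebra $\frh = L_0^0$ is toral on $L$ with root spaces $L_\alpha$ ($\alpha \in \Delta$), while $D^0 = \euD$ induces the $\La$-grading through the characters $\ev_\la$ (see \ref{lietor-prop}); since $C^0$ is central in $E$, the roots of $(E,H)$ are the functionals $\alpha + \ev_\la$, and the injectivity of $\ev_{D^0}$ forces $E_0 = H$, which gives (EA1). The form transferred to $H^*$ is nondegenerate on the $\frh$-part and pairs $C^0$ with $D^0$ hyperbolically, so $(\psi \mid \psi)$ depends only on the $\frh$-component of $\psi$; hence $\Psi\an$ consists exactly of the roots with nonzero $\Delta$-part, namely the $L_\alpha^\la$ with $\alpha \ne 0$. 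Local nilpotence (EA2) then follows from the $\lsl_2$-theory built into (LT2) together with the formula for $(\ad_E x)^n$ established in the proof of Proposition~\ref{li-elem}; connectedness (EA3) is immediate from irreducibility of $\Delta$; and (EA5) holds because $\La$ is free of finite rank by the definition of a Lie torus. For the core I would note that the anisotropic root spaces generate $L$ modulo $C$ by (LT3) and, through the values $\si_D(l_1,l_2)(d) = (d(l_1)\mid l_2)$, capture all of $C$ because these functionals separate $D$ by nondegeneracy of $\inpr_L$ and faithfulness of the $\euD$-action (LT4); thus $E_c = L \oplus C$. An element $l+c+d$ centralizing $E_c$ forces $\ad_L l = -d$ on $L$, and since $\IDer(L) \cap \CDer_k(L) = 0$ by \eqref{lietor-prop1} and $L$ is centreless, we get $l = 0 = d$, so the centralizer lies in $C \subset E_c$, giving (EA4). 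Finally $Z(E_c) = C$ and $E_{cc} = L$.

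For part (b) I would start from the facts recalled in \ref{rev:root}: the core $E_c$ is an ideal, and $L := E_c/Z(E_c)$ is a centreless Lie torus of type $(\Delta,\La)$, on which I fix a $\La$-graded invariant form $\inpr_L$ (descended from $\inpr$, since the central $V := Z(E_c)$ lies in the radical of $\inpr|_{E_c}$). The quotient map exhibits $E_c$ as a central extension of $L$ by $V$, so $E_c \cong L \oplus_{\si} V$ for a central $2$-cocycle $\si$. Next I would choose a complement $D$ to $E_c$ in $E$ compatible with the $H$-grading and with $H = (H \cap E_c) \oplus D^0$; the adjoint action of $E$ on the characteristic ideals $E_c \supset V$ descends to an action of $D$ on $L$ by derivations. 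Two observations place this action in $\SCDer_k(L)$: invariance of $\inpr$ gives $(d\cdot \tilde l \mid \tilde l) = ([d,\tilde l] \mid \tilde l) = (d \mid [\tilde l,\tilde l]) = 0$ for a lift $\tilde l$ of $l$, so the derivations are skew; and the $H$-grading together with \eqref{lietor-prop1} lets me adjust $D$ by inner derivations (absorbed into $E_c$) so that it acts by centroidal derivations. The self-centralizing axiom (EA1) translates precisely into the injectivity of $\ev_{D^0}$, and (EA4) ensures no nonzero element of $D$ acts trivially on $E_c$, so that $D \hookrightarrow \SCDer_k(L)$ faithfully.

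It then remains to match the bilinear and cocycle data. Nondegeneracy of $\inpr$ on $E$, together with $V \perp E_c$, identifies $V$ with the graded dual $C = D^{\gr\,*}$; under this identification invariance of $\inpr$ converts the extension cocycle into $\si = \si_D$ via $\si(l_1,l_2)(d) = (d(l_1)\mid l_2)$. The residual bracket on the complement then reads $[d_1,d_2]_E = [d_1,d_2]_D + \ta(d_1,d_2)$ with $\ta$ valued in $C$ and Jacobi-closed, and invariance of $\inpr$ forces the affine-cocycle conditions $\ta(d^0,d)=0$ and $\ta(d_1,d_2)(d_3)=\ta(d_2,d_3)(d_1)$. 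Assembling these identifications yields $(E,H,\inpr) \simeq \EA(L,\inpr_L,D,\ta)$. The hard part is part (b), and within it the single most delicate step is arranging $D$ to be a \emph{graded subalgebra of $\SCDer_k(L)$} rather than merely a graded subspace mapping to derivations: one must simultaneously control the inner-versus-centroidal splitting \eqref{lietor-prop1}, keep everything compatible with the $\Xi$-grading, and ensure the leftover cocycle $\ta$ is genuinely affine. This rests on the fine structure of centreless Lie tori (faithfulness and freeness of the centroid action, the semidirect decomposition of $\Der_k(L)$, and uniqueness up to scalar of the invariant form) together with the nondegeneracy of $\inpr$ that pins down $C$ as the graded dual of $D$.
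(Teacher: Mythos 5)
The paper itself gives no proof of Theorem~\ref{n:mainconst}; it is imported verbatim from \cite[Th.~6]{Ne4}, and the preceding text only ``indicates part of'' the statement by setting up the construction. So the comparison can only be against the standard argument in that reference, and measured against it your reconstruction of part (a) is essentially correct and complete: the $H$-weight spaces are the $L_\al^\la$, $C^\xi$, $D^\xi$ with weights $\al+\ev_\la$; injectivity of $\ev_{D^0}$ gives $E_0=H$; the form transferred to $H^*$ is insensitive to the $C^0\oplus D^0$ part, so $\Psi\an$ is detected by the $\De$-component; and your arguments for (EA2)--(EA5), for $E_c=L\oplus C$ (the functionals $\si_D(l_1,l_2)$ span each $C^\xi=(D^{-\xi})^*$ by nondegeneracy of $\inpr_L$ and faithfulness of the $D$-action), and for (EA4) via $\IDer(L)\cap\CDer_k(L)=\{0\}$ all go through.

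For part (b) your outline is the right one, but the two points you yourself single out as delicate are precisely where the substance of \cite{Ne4} lies, and your sketch asserts rather than establishes them. First, the correction of a graded complement $D$ of $E_c$ so that it becomes a \emph{subalgebra} of $\SCDer_k(L)$: skewness and the splitting \eqref{lietor-prop1} get you into $\SCDer_k(L)$ as a subspace, but closure under the bracket and the normalization $\ta(D^0,D)=0$ require re-choosing the homogeneous pieces $D^\la$, $\la\ne0$, inside the $H$-weight spaces of $E$ so that the unwanted $C$-components of $[D^0,D^\la]_E$ are absorbed; invariance of $\inpr$ alone yields only the cyclic identity $\ta(d_1,d_2)(d_3)=\ta(d_2,d_3)(d_1)$, not the vanishing $\ta(d^0,d)=0$. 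Second, the identification $Z(E_c)\simeq D^{\gr *}$ needs the dimension pairing $\dim C^\la=\dim D^{-\la}$ coming from nondegeneracy of $\inpr$ on all of $E$ (cf.\ Remark~\ref{comp:def}), not merely the inclusion $Z(E_c)\subset E_c^{\perp}$. Since you flag these gaps explicitly and the surrounding skeleton matches the cited proof, I would accept this as an accurate roadmap of the argument the paper chooses to quote rather than reprove, with the hardest normalizations left as acknowledged black boxes.
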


\begin{remark}\label{rem:scalar} As mentioned in \ref{lietor-prop}, invariant $\La$-graded bilinear forms on $L$ are unique up to a scalar. Changing the form on $L$ by the scalar $s\in k$, will result in multiplying the central cocycle $L \times L \to C$ by $s$. Including for a moment the bilinear form $\inpr$ on $L$ in the notation, the map
$\Id_L \oplus s\Id_C\oplus \Id_D$ is an isomorphism from  $\EA(L,\inpr_L,
D,\ta)$  to $\EA(L, s\inpr_L, D, s\ta)$. \end{remark}


\section{Invariance of the core}\label{equality of cores}

In this section $\big(E,H, \inpr \big)$ is an EALA whose centreless core
$E_{cc} = E_c/Z(E_c)$ is an arbitrary Lie torus $L$, hence not necessarily
fgc. We decompose $E$ in the form
$$E = L \oplus C \oplus D$$
as described in the previous section. We have a canonical map
$\overline{\phantom{0}} \co E_c \to E_c /Z(E_c)=L.$

We start by proving a result of independent interest on the structure of
ideals of the Lie algebra $E$.

\begin{proposition}
  \label{prop:ideal-stru} Let $I$ be an ideal of the Lie algebra $E$. Then either $I \subset C= Z(E_c)$ or $E_c \subset I$.
\end{proposition}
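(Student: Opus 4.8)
The plan is to study $I$ through its interaction with the core $E_c=L\oplus C$, using the canonical projection $\overline{\phantom{0}}\co E_c\to E_c/Z(E_c)=L$. Since $E_c$ is an ideal of $E$, the subspace $J:=I\cap E_c$ is again an ideal of $E$, so its image $\overline J\subseteq L$ is an ideal of $L$. The operators $\ad_L\frh$ (for $\frh=L_0^0$) and the degree derivations $\euD=D^0$ act on $L$, commute, and are simultaneously diagonalizable with joint eigenspaces the $L_\alpha^\lambda$; as $J$ is stable under $\ad\frh$ and $\ad\euD$ and the projection intertwines these actions with those on $L$, the ideal $\overline J$ is $\euQ(\Delta)\oplus\Lambda$-graded. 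The argument then divides according to whether $\overline J$ vanishes.

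First I would dispose of the case $\overline J\ne 0$. Here $\overline J$ is a nonzero graded ideal of the centreless Lie torus $L$, so by graded-simplicity of centreless Lie tori we get $\overline J=L$. Thus every $l\in L$ admits a lift $l+c\in I$ with $c\in C$. Bracketing such a lift against an arbitrary $l'\in L\subset E$, and using that $C$ is central in $E_c$, formula \eqref{n:gencons3} gives $[\,l+c,\,l'\,]_E=[l,l']_L+\si(l,l')\in I$. As $l,l'$ range over $L$ these elements span $[E_c,E_c]$; since the core of an EALA is perfect --- equivalently $\si(L,L)=C$, which holds because each $D^\xi$ is finite-dimensional and a nonzero $d\in D$ satisfies $(d(l)\mid l')\ne 0$ for suitable $l,l'$ by nondegeneracy of $\inpr$ --- we conclude $E_c=[E_c,E_c]\subseteq I$, the second alternative.

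The remaining case $\overline J=0$, i.e.\ $I\cap E_c\subseteq Z(E_c)=C$, is where I expect the real difficulty to lie, and it is the heart of the proof. The goal is to rule out any genuine $D$-component in $I$. Take any $x=l_x+c_x+d_x\in I$. For every $l'\in L$ the bracket $[x,l']_E$ has zero $D$-component by \eqref{n:gencons3}, so it lies in $I\cap E_c\subseteq C$; its $L$-component, namely $[l_x,l']_L+d_x\cdot l'$, must therefore vanish for all $l'$. This says precisely that the centroidal derivation $d_x\in D\subseteq\CDer_k(L)$ acts on $L$ as the inner derivation $-\ad_L l_x$. By the semidirect decomposition $\Der_k(L)=\IDer(L)\rtimes\CDer_k(L)$ of \eqref{lietor-prop1} we have $\IDer(L)\cap\CDer_k(L)=0$, forcing $d_x=0$. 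Hence $I\subseteq L\oplus C=E_c$, and so $I=I\cap E_c\subseteq C$, the first alternative.

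The feature that makes this delicate is that $E=L\oplus C\oplus D$ is not a direct sum of ideals: the bracket couples the three summands through $\si$ and $\ta$. The computations above are arranged so that each bracket known to lie in $I$ has a component one can control --- the whole element in the first case, the $L$-component in the second --- so that the rigidity of $\Der_k(L)$, concretely the triviality of $\IDer(L)\cap\CDer_k(L)$, does the decisive work.
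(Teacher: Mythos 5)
Your proof is correct and follows essentially the same route as the paper's: your case $\overline{J}\ne 0$ reproduces steps (II)--(IV) of the paper's argument ($\La$-graded simplicity of the centreless Lie torus plus perfectness of the core), and your case $\overline{J}=0$ is the contrapositive of the paper's step (I), resting on the same facts $\IDer(L)\cap\CDer_k(L)=0$ and $Z(L)=0$. The only slip is the identification $\euD=D^0$ --- in general $D^0$ is merely a subalgebra of $\euD$ on which the evaluation map $\ev$ is injective --- but that injectivity is exactly what your grading argument needs, so nothing is lost.
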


Since  $L$ is centreless, the centre of $E_c$ is $C$. We note that it is
immediate that $C\ideal E$.

\begin{proof}
We assume that $I \not \subset C$ and set $I_c = I \cap E_c$ and $I_{cc} =
\overline{I_c}.$ We will proceed in several steps using without further
comments the notation introduced in \S\ref{sec:review}.

(I) $I_{cc} \ne \{0\}$: Let $ e=  x + c + d \in I$ where $x\in L, c \in C$
and $d\in D$. For any $l \in L$ we then get $[e, l]_E = (\ad_L x + d)(l) +
\si_D(x,l) \in I$, whence $(\ad_L x + d) (l) \in I_{cc}$. If for all $e\in I$
the corresponding derivation $\ad_L x + d = 0$ it follows that $x=0=d$ since
$L$ is centreless. But then $I \subset C$ which we excluded. Therefore some
$e\in I$ has $\ad_L x +  d \ne 0$, hence $0 \ne (\ad_L x + d)(l)\in I_{cc}$
for some $l\in L$.\smallbreak

(II) $d \cdot x \in I_{cc}$ for all $d\in D$ and $x\in I_{cc}$: There exists
$c\in C$ such that $ x + c\in I_c$. Hence $[d, x + c]_E = d \cdot x + d\cdot
c\in I_c$ since $I_c$ is an ideal of $E$. Therefore $d \cdot x \in  I_{cc}$.

(III) $I_{cc}=L$: Since the $\La$-grading of $L$ is induced by the action of
 $D^0 \subset D$ on $L$, it follows from (II) that $I_{cc}$ is a $\La$-graded ideal.
By \cite[Lemma 4.4]{Yo}, $L$ is a $\Lambda$-graded simple. Hence $I_{cc} = L$.

(IV) $E_c \subset I$: Let $c\in C$ be arbitrary. Since $E_c$ is perfect,
there exist $l_i, l_i'\in L$ such that $c= \sum_i [l_i, l_i']_E$. By (III)
there exist $c_i \in C$ such that $ l_i + c_i\in I_c$. Then $[l_i, l_i']_E =
[l_i + c_i, l'_i]_E \in I_c$ implies $c\in I_c$ which together with (III)
forces $E_c \subset I$.
\end{proof}

\begin{corollary}\label{cores are the same}
Let $\big(E,H, \inpr \big)$ and $(E,H', \inpr' \big)$ be two extended affine
Lie algebra structures on $E$ with cores $E_c$ and $E'_c$ respectively. Then
$E_c = E'_c$.
\end{corollary}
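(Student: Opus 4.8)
The plan is to show that each core is characterized \emph{intrinsically}, without reference to the particular EALA structure that produced it, so that the two cores must coincide. The key tool is Proposition~\ref{prop:ideal-stru}: both $E_c$ and $E'_c$ are ideals of the single Lie algebra $E$ (the core of any EALA is an ideal, as noted after axiom (EA5)), and the proposition says that any ideal of $E$ is either contained in the centre $C=Z(E_c)$ of the core, or else contains the whole core $E_c$. Since $E_c$ is perfect (it is generated by the root spaces $E_\alpha$, $\alpha\in\Psi\an$, each carrying an $\lsl_2$-triple) and in particular nonzero and noncentral, it cannot sit inside $C'=Z(E'_c)$; hence the dichotomy forces $E'_c\subset E_c$.

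First I would record that $E_c$ is an ideal of $E$ and that $E_c\not\subset C'$: indeed $E_c$ is a perfect nonabelian subalgebra, whereas $C'$ is the centre of $E'_c$ and in particular abelian, so $E_c\subset C'$ is impossible once $E_c\neq\{0\}$. Applying Proposition~\ref{prop:ideal-stru} to the EALA structure $(E,H',\inpr')$ with the ideal $I=E_c$, the only surviving alternative is $E'_c\subset E_c$. Next I would run the symmetric argument: interchanging the roles of the two structures and applying the proposition to the ideal $I=E'_c$ of the EALA $(E,H,\inpr)$, I obtain $E_c\subset E'_c$. Combining the two inclusions yields $E_c=E'_c$.

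The only genuinely delicate point is verifying the hypotheses of Proposition~\ref{prop:ideal-stru} for the \emph{second} structure, since the proposition is stated (and proved) relative to a fixed decomposition $E=L\oplus C\oplus D$ of one EALA. One must check that both $E_c$ and $E'_c$ are honest ideals of the common underlying Lie algebra $E$ and that the ``noncentral'' alternative genuinely applies; here the perfectness and nontriviality of cores, together with the fact that the proposition's conclusion is phrased as a clean dichotomy, make the argument go through cleanly in both directions. I do not expect a computational obstacle, since the heavy lifting has already been done in Proposition~\ref{prop:ideal-stru}; the corollary is essentially a formal two-sided application of that dichotomy, and the main thing to be careful about is simply that the two noncentrality checks $E_c\not\subset Z(E'_c)$ and $E'_c\not\subset Z(E_c)$ are both legitimate, which they are because cores are perfect and hence noncentral.
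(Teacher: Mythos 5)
Your proposal is correct and follows essentially the same route as the paper: both apply the dichotomy of Proposition~\ref{prop:ideal-stru} to one core viewed as an ideal with respect to the other EALA structure, rule out the central alternative (you via perfectness/nonabelianness of the core, the paper via the existence of anisotropic roots — equivalent observations), and then conclude by symmetry.
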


For special types of EALAs, namely those for which the the root system $\De$
in \eqref{root1} is reduced, Corollary~\ref{cores are the same} is proven in
\cite[Th.~5.1]{maribel} with a completely different method.

\begin{proof} Since $E'_c$ is an ideal of $E$, Proposition~\ref{prop:ideal-stru} says that either $E'_c \subset Z(E_c)$ or $E_c \subset E'_c$. In the first case $E'_c$ is abelian, a contradiction to the assumption that anisotropic roots exist. Hence $E_c \subset E'_c$. By symmetry, also $E'_c \subset E_c$. \end{proof}

\begin{corollary}\label{cor:ff}
 Let $(E,H,\inpr)$ and $(E,H,\inpr')$ be two EALAs. We distinguish the notation of {\rm \ref{def:eala}} for $(E,H,\inpr')$ by $'$. \sm

{\rm (a)} $\Psi=\Psi'$, $\Psi^0 = \Psi^{\prime\, 0}$, $\Psi\an =
{\Psi'\,}\an.$  \sm

{\rm (b)} There exists $0\ne a\in k$ such that $\inpr|_{E'_c\times E'_c} = a
\inpr|_{E_c \times E_c}$.
 \end{corollary}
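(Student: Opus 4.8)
The plan is to establish (b) first and then read (a) off from it. Two preliminary observations cost nothing: since $\Psi$ and $\Psi'$ are by definition the sets of $H$-weights occurring on $E$ and $H$ is the same for both structures, $\Psi=\Psi'$; and applying Corollary~\ref{cores are the same} with $H'=H$ yields $E_c=E'_c$. Hence the centre $C=Z(E_c)$, the centreless core $L=E_{cc}$, the projection $E_c\to L$, and the decomposition of $E_c$ (and of $L$) into $H$-weight spaces are all common to the two EALA structures; only the forms, and a priori the splitting of $\Psi$ into anisotropic and isotropic roots, may differ.

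For (b) I would restrict $\inpr$ to $E_c$ and descend it to $L$. Because $E_c$ is perfect and $C=Z(E_c)$, the subspace $C$ lies in the radical $\rad$ of $\inpr|_{E_c}$; invariance of $\inpr$ together with $E_c$ being an ideal shows that $\rad$ is in fact an ideal of all of $E$. Proposition~\ref{prop:ideal-stru} then leaves only $\rad\subseteq C$ or $E_c\subseteq\rad$, and the second is impossible: it would make $\inpr$ vanish on $E_c$, whereas for any anisotropic root $\psi$ the spaces $E_\psi,E_{-\psi}$ lie in $E_c$ and are paired nondegenerately by $\inpr$ (nondegeneracy on $E$ plus orthogonality of non-opposite weight spaces). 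Thus $\rad=C$ and $\inpr|_{E_c}$ descends to a nondegenerate invariant symmetric form $\ol\beta$ on $L$, and likewise $\inpr'$ descends to $\ol\beta'$. The crucial point is that both descended forms are $\La$-graded for the Lie-torus grading of $L$: the $H$-weight grading of $L$ refines the $\La$-grading, so the orthogonality $(E_\psi\mid E_{\psi'})=0$ for $\psi+\psi'\neq 0$, valid for any invariant form, forces $(L^\la\mid L^\mu)=0$ whenever $\la+\mu\neq 0$, for either form. By the uniqueness up to scalar of the $\La$-graded nondegenerate invariant form on the centreless Lie torus $L$ recalled in \ref{lietor-prop}, we get $\ol\beta'=a\,\ol\beta$ for some $0\neq a\in k$; since $C$ is the radical of both restrictions this gives $\inpr'|_{E_c}=a\,\inpr|_{E_c}$, which is (b).

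To deduce (a) I would show that the norm $(\psi\mid\psi)$ of any root is computed entirely inside $E_c$, and so rescales by $a$. With $\frh=L_0^0$ and $C^0=C\cap H$, the weight-zero space $E_c\cap H$ maps onto $\frh$ with kernel $C^0$. For $c^0\in C^0$ the operator $\ad c^0$ sends $E$ into the ideal $E_c$ and kills $E_c$, so $(\ad c^0)^2=0$; being $\ad$-diagonalizable it vanishes, i.e. $C^0\subseteq Z(E)$ and $\psi(c^0)=0$ for every $\psi\in\Psi$. Hence the $\inpr|_H$-dual $t_\psi$ of $\psi$ is orthogonal to $C^0$, and because $C^0$ is isotropic and paired nondegenerately with a complement of $E_c\cap H$ in $H$, this forces $t_\psi\in E_c\cap H$. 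As $C^0\subseteq\rad(\inpr|_{E_c})$, the value $(\psi\mid\psi)=(t_\psi\mid t_\psi)$ is computed entirely from $\inpr|_{E_c\cap H}\subseteq\inpr|_{E_c}$. Running the same computation for $\inpr'$ and invoking (b), the dual vector rescales by $a^{-1}$ modulo $C^0$, so $(\psi\mid\psi)'=a^{-1}(\psi\mid\psi)$; in particular vanishing is preserved, giving $\Psi^0=\Psi^{\prime\,0}$ and therefore $\Psi\an={\Psi'\,}\an$.

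The main obstacle, I expect, is the middle step of (b): comparing the two descended forms as graded forms on the same Lie torus without circularly assuming $\La=\La'$. The resolution is that $\La$-gradedness of each descent follows purely from weight-space orthogonality relative to the common (form-independent) $H$-grading, which refines the $\La$-grading of $L$; thus the uniqueness statement of \ref{lietor-prop} applies to both $\ol\beta$ and $\ol\beta'$ for one and the same grading.
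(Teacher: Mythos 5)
Your argument is correct, and it reaches the same conclusions by a partly different route. For part (b) both proofs hinge on the identification $\rad\bigl(\inpr|_{E_c\times E_c}\bigr)=Z(E_c)=C$ followed by the uniqueness, up to a scalar, of the form on the centreless core: the paper obtains the identification in one line from perfectness ($z\in Z(E_c)\iff 0=([z,E_c]\mid E)=(z\mid[E_c,E])=(z\mid E_c)$), whereas you route it through the ideal dichotomy of Proposition~\ref{prop:ideal-stru}; that works, but is heavier than needed. On the other hand, your explicit verification that \emph{both} descended forms are $\La$-graded for one and the same Lie-torus grading of $L=E_{cc}$ --- deduced from $H$-weight-space orthogonality, which holds for any invariant form --- makes precise a step the paper leaves implicit, and it is genuinely relevant since the uniqueness recalled in \ref{lietor-prop} is stated for $\La$-graded forms. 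For part (a) your route is genuinely different: the paper proves $\Psi^0=\Psi^{\prime\,0}$ independently of (b), citing the structure of the root spaces (\cite[6.9]{n:persp}) to get $\Psi^0=\pi^{-1}(\{0\})$ for the restriction $\pi\co H^*\to H_c^*$, which is manifestly form-independent once $E_c=E'_c$ is known; you instead derive it from (b) by showing $C^0\subset Z(E)$, hence $t_\psi\in H_c$, hence $(\psi\mid\psi)'=a^{-1}(\psi\mid\psi)$. Your version is more self-contained and yields the quantitative rescaling of root lengths (together with $C^0\subset Z(E)$) as a by-product; the paper's is shorter and does not need (b) at all. The only places where your write-up is terse rather than wrong are the claims that $\rad\bigl(\inpr|_{E_c\times E_c}\bigr)$ is an ideal of all of $E$ (it is, because $E_c\ideal E$ and $\inpr$ is invariant) and that $(C^0)^\perp\cap H=H_c$ (this needs $\dim C^0=\dim D^0$, which is part of the structure theory of Theorem~\ref{n:mainconst}).
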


\begin{proof} (a) The equality $\Psi=\Psi'$ is obvious since $\Psi$ is the set of roots of $H$. By Corollary~\ref{cores are the same}, we have $E_c = {E'}_c$. The algebra $E_c$ is a Lie torus whose root-grading by a finite irreducible root system $\Delta$ is induced by $H_c = H \cap E_c$. Let $\pi \co H^* \to H_c^*$ be the canonical restriction map. The structure of the root spaces of $E$, see for example \cite[6.9]{n:persp}, shows that $\Psi^0 = \pi^{-1}(\{0\})$ whence $\Psi^0={\Psi'}^{0}$.

(b) Because $E_c$ is perfect, the centre of $E_c$ equals the radical of
$\inpr|_{E_c \times E_c}$. Indeed, let $z\in E_c$. Then, using that $\inpr$
is nondegenerate and invariant and that $E_c$ is perfect we have $z\in Z(E_c)
\iff 0=([z,E_c]\mid E) = (z\mid [E_c, E]) = (z\mid E_c)  \iff$ $ z$ lies in
the radical of the restriction of $\inpr$ to $E_c.$ Now (b) follows from the
fact that invariant bilinear forms on $E_{cc}$ are unique up to a scalar.
\end{proof}

As a consequence, when no explicit use of the form is being made, we will
denote EALAs as couples $(E,H)$.

As an application of Corollary~\ref{cores are the same} we can now prove

\begin{proposition}\label{automorphism gives another EALA}
The core $E_c$ of an EALA $(E,H)$ is stable under automorphisms of the
algebra $E$, i.e.,  $f (E_c)=E_c$ for any $f \in \Aut_k(E)$.
\end{proposition}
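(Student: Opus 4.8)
The plan is to obtain the result as an immediate consequence of Corollary~\ref{cores are the same}, by transporting the EALA structure along $f$. Fix $f \in \Aut_k(E)$ and let $(E,H,\inpr)$ be the given EALA. I would define a second datum on the \emph{same} Lie algebra $E$ by setting $H' := f(H)$ and $\inpr'(x,y) := \inpr\big(f^{-1}(x),\, f^{-1}(y)\big)$ for $x,y \in E$.

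First I would check that $(E,H',\inpr')$ is again an EALA, i.e.\ that it satisfies (EA1)--(EA5). This is a routine transport-of-structure verification: since $f$ is an automorphism of the Lie algebra $E$, it carries the toral, self-centralizing subalgebra $H$ to a toral, self-centralizing subalgebra $H' = f(H)$, with root spaces $E'_{f(\al)} = f(E_\al)$, where $f(\al) \in {H'}^*$ is defined by $f(\al)(f(h)) = \al(h)$. The local nilpotency of $\ad x_\al$ in (EA2), the connectedness (EA3), the centralizer condition (EA4) and the nullity condition (EA5) are all preserved, because they are formulated purely in Lie-theoretic and bilinear-form terms that $f$ respects by the very construction of $\inpr'$. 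In particular, since $\inpr'$ is the $f$-transport of $\inpr$, one has $(f(\al)\mid f(\al))' = (\al\mid \al)$, so the anisotropic roots of $(E,H',\inpr')$ are exactly the $f(\al)$ with $\al \in \Psi\an$.

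The key observation is then that $f$ is, by construction, an isomorphism of EALAs $(E,H,\inpr) \to (E,H',\inpr')$ in the sense of \ref{def-isom}, and any such isomorphism maps core onto core. Concretely, the core of $(E,H',\inpr')$ is the subalgebra generated by $\bigcup_{\al \in \Psi\an} f(E_\al) = f\big(\bigcup_{\al \in \Psi\an} E_\al\big)$, which is precisely $f(E_c)$. Applying Corollary~\ref{cores are the same} to the two EALA structures $(E,H,\inpr)$ and $(E,H',\inpr')$ on the single Lie algebra $E$ shows that their cores coincide, i.e.\ $E_c = f(E_c)$, as desired.

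I do not expect a serious obstacle: the content lies entirely in Corollary~\ref{cores are the same} (hence ultimately in Proposition~\ref{prop:ideal-stru}), and the remaining work is the bookkeeping that (EA1)--(EA5) transport along $f$ and that the core, being generated by the anisotropic root spaces, is carried onto $f(E_c)$. The one point deserving genuine care is that the core a priori depends on the choice of invariant form; this is exactly why I transport $\inpr$ to $\inpr'$ rather than reusing $\inpr$, which guarantees that the anisotropic roots---and hence the core---of the second structure are precisely the $f$-images of those of the first. One could alternatively invoke Corollary~\ref{cor:ff} to see that the form is immaterial for this purpose, but transporting it is cleaner.
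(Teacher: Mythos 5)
Your proposal is correct and follows essentially the same route as the paper: transport the EALA structure along $f$ to get a second structure $(E,H',\inpr')$ with $H'=f(H)$ and $\inpr'$ the pullback form, observe that its core is $f(E_c)$ because the anisotropic roots of the new structure are exactly the $f$-images of those of the old, and then invoke Corollary~\ref{cores are the same}. The paper carries out the anisotropy-preservation step explicitly via the elements $t_\alpha$ dual to the roots, which is the same bookkeeping you describe.
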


\begin{proof}
Let $f\in {\rm Aut}_{k}(E)$. Denote $H'=f(H)$. Let $\inpr'$ be the bilinear
form on $E$ given by
$$
(x\,|\,y)'=\big( f^{-1}(x)\,|\,f^{-1}(y)\big).
$$
Clearly,  $\big(E, H', \inpr' \big)$ is another EALA-structure on the Lie
algebra $E$.  Therefore, by Corollary~\ref{cores are the same}, we have that
the core $E'_c$ of $\big(E,H', \inpr' \big)$ is equal to $E_c$. It remains to
show that $E'_c=f(E_c)$.

Let $\alpha\in \Psi$ be a root with respect to $H$. There exists a unique
element $t_{\alpha}$ in $H$ such that $(t_{\alpha}\,|\, h)=\alpha (h)$ for
all $h\in H$. Recall that $\alpha$ is anisotropic if
$(t_{\alpha}\,|\,t_{\alpha})\neq 0$ and that $E_c$ is generated (as an ideal)
by $\cup_{\alpha \in \Psi\an} E_{\alpha}$. Let $\Psi'$ be the set of roots of
$(E,H')$. The mapping $^t{f_{|H}^{-1}}: H^*\rightarrow H'^*$ satisfies
$^t{f_{|H}^{-1}}(\Psi)=\Psi'$. Notice that
$f(t_{\alpha})=t_{(^tf)^{-1}(\alpha)}$. We next have
$(t_{(^tf)^{-1}(\alpha)}\,|\,t_{(^tf)^{-1}(\alpha)})'=(f(t_\alpha)\,|\,f(t_{\alpha}))'=
(t_{\alpha}\,|\,t_{\alpha})$. Therefore, $ ^t{f^{-1}}(\Psi\an) =(\Psi')\an$,
$f(E_{\alpha})=E'_{^t{f^{-1}}}(\alpha)$, and this implies $f(E_c)=E'_c=E_c$.
\end{proof}

By Proposition~\ref{automorphism gives another EALA} we have a well-defined
restriction map
\[
  \res_c \co \Aut_k(E) \longrightarrow  \Aut_k (E_c).
\]
Since  $L$ is centreless, the centre of $E_c$ is $C$. It is left invariant by
any automorphism of $E_c$. Hence $\overline{\phantom{0}} \co E_c \to L$
induces a natural group homomorphism \[ \overline{\res} \co \Aut_{k}(E_c)\to
\Aut_{k}(L). \] Composing  the two group homomorphisms yields
\begin{equation}\label{def:rescc}
  \rescc := \overline{\res} \circ \res_c   \co \Aut_{k}(E)\to \Aut_{k}(L).
\end{equation}

We can easily determine the kernel of $\rescc$. For its description we recall
that a $k$-linear map $\psi \co D \to C$ is called a {\em derivation\/} if
$\psi([d_1, d_2]) = d_1 \cdot \psi(d_2) - d_2 \cdot \psi (d_1)$ holds for all
$d_i \in D$. We denote by $\Der_k(D,C)$ the $k$-vector space of derivations
from $D$ to $C$.

\begin{proposition}
  \label{res-ker}
  {\rm (a)} $\overline{\res}$ is injective.

  {\rm (b)} The kernel of\/ $\rescc$ consists of the maps $f$ of the form
  \begin{equation} \label{res-ker1}  f( l + c + d) =   l + \big( c  + \psi(d)\big) + d, \qquad
     \psi \in \Der_k(D,C). \end{equation}
In particular, $\Ker (\rescc)$ is a vector group  isomorphic to
$\Der_k(D,C)$.
\end{proposition}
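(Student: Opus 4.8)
The plan is to prove (a) first and then use it to reduce (b) to an explicit computation with the product formula \eqref{n:gencons3}. For (a), suppose $g \in \Aut_k(E_c)$ satisfies $\overline{\res}(g) = \id_L$, so that $g$ induces the identity on $L = E_c/C$. Then for every $e \in E_c$ we have $g(e) = e + \phi(e)$ with $\phi(e) := g(e) - e \in C = Z(E_c)$. Since $C$ is central in $E_c$, all cross terms drop out and $g([e_1,e_2]) = [e_1 + \phi(e_1),\, e_2 + \phi(e_2)] = [e_1,e_2]$ for all $e_i \in E_c$. As $E_c$ is perfect (as used in the proof of Proposition~\ref{prop:ideal-stru}), it is spanned by such brackets, so $g = \id_{E_c}$, proving injectivity. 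Consequently $\rescc(f) = \overline{\res}(\res_c(f)) = \id_L$ forces $\res_c(f) = \id_{E_c}$; that is, $f \in \Ker(\rescc)$ if and only if $f$ fixes $E_c = L \oplus C$ pointwise. The task in (b) is therefore to determine how such an $f$ acts on the remaining summand $D$.

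The main step is to pin down $f(d)$ for $d \in D$. Writing $f(d) = x_d + c_d + \delta_d$ with $x_d \in L$, $c_d \in C$, $\delta_d \in D$, I would exploit the relation $[d,l]_E = d \cdot l \in L$ coming from \eqref{n:gencons3}. Applying $f$ and using that $f$ fixes $L$ gives $f([d,l]_E) = d\cdot l$, while expanding $[f(d), l] = [x_d + c_d + \delta_d,\, l]$ via \eqref{n:gencons3} and comparing components yields, on the $L$-component, $[x_d, l]_L + \delta_d \cdot l = d\cdot l$ for all $l \in L$ (the $C$-component gives $\si_D(x_d, l) = 0$, which will be automatic once $x_d = 0$). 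The $L$-relation says precisely that the derivation $(\delta_d - d) + \ad_L x_d$ annihilates $L$; here $\delta_d - d \in D \subset \CDer_k(L)$ while $\ad_L x_d \in \IDer(L)$. The semidirect decomposition $\Der_k(L) = \IDer(L) \rtimes \CDer_k(L)$ of \eqref{lietor-prop1} forces each summand to vanish, and centrelessness of $L$ then gives $x_d = 0$ and $\delta_d = d$. Hence $f(d) = d + c_d$, and setting $\psi(d) := c_d \in C$ gives exactly the asserted shape \eqref{res-ker1}. This separation of the inner and centroidal parts of a derivation of $L$ is the crux of the argument; the rest is bookkeeping with the product formula.

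It remains to identify which linear maps $\psi \co D \to C$ occur and to describe the group structure. Expanding $f([d_1,d_2]_E) = [f(d_1), f(d_2)]$ with \eqref{n:gencons3}, the $D$-components and the affine-cocycle terms $\ta(d_1,d_2)$ cancel, leaving $\psi([d_1,d_2]) = d_1 \cdot \psi(d_2) - d_2 \cdot \psi(d_1)$, so $\psi \in \Der_k(D,C)$. Conversely, for any $\psi \in \Der_k(D,C)$ one defines $f$ by \eqref{res-ker1} and verifies, by reversing these computations (the brackets with $C$ being trivial since $C$ is central), that $f$ is an automorphism of $E$ fixing $L$, hence lies in $\Ker(\rescc)$. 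Finally $\psi \mapsto f_\psi$ is clearly bijective and satisfies $f_{\psi_1}\circ f_{\psi_2} = f_{\psi_1 + \psi_2}$, so $\Ker(\rescc)$ is a vector group isomorphic to $\Der_k(D,C)$.
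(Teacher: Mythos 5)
Your proof is correct and takes essentially the same route as the paper's: injectivity of $\overline{\res}$ from perfectness of $E_c$ together with centrality of $C$, then comparison of $[f(d),l]_E$ with $d\cdot l$ using $\Der_k(L)=\IDer(L)\rtimes\CDer_k(L)$ and centrelessness of $L$ to force $f(d)=d+\psi(d)$, and finally the bracket $[d_1,d_2]_E$ to identify $\psi$ as a derivation. The only difference is that you spell out the steps the paper leaves as ``immediate'' or ``straightforward'' (the injectivity argument, the converse, and the vector-group structure), which is fine.
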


\begin{proof}
  (a) is immediate from the fact that $ L \oplus C = [L,L]_E$. It implies that $\Ker (\rescc )= \Ker (\res_c)$. Let $f\in \Ker (\res_c)$. Then there exist linear maps   $f_{CD} \in \Hom_k(D,C)$, $f_{LD} \in \Hom_k(D,L)$ and $f_D\in \End_k(D)$ such that $f(d) = f_{LD}(d) + f_{CD}(d) + f_D(d)$ holds for all $d\in D$. For $l\in L$ we then get $d\cdot l = f([d,l]) = [f_{LD}(d) + f_{CD}(d) + f_D(d), l] = \big( \ad_L f_{LD}(d) + f_D(d)\big)(l)$, i.e., $d = \ad_L f_{LD}(d) + f_D(d)$. Since $D \cap \IDer L = \{0\}$ it follows that $f_{LD} = 0$ and $f_D = \Id_D$. One then sees that $f_{CD}$ is a derivation by applying $f$ to a product $[d_1, d_2]_E$. That conversely any map of the form \eqref{res-ker1} is an automorphism, is a straightforward verification.
\end{proof}

Our next goal is to study in detail the image of $\rescc$. From
Proposition~\ref{li-elem} we know
\[ \EAut(L) \subset \rescc\big( \Aut_k(E) \big). \]

For the Conjugacy Theorem \ref{main-res} it is necessary to know that a
bigger group of automorphisms of $L$ lies in the image of $\rescc$. We will
do this in Theorem~\ref{litwi}. Its proof requires some preparations to which
the next two sections are devoted.


\section{Fgc EALAs as subalgebras of untwisted EALAs}\label{sec:subEALA}

We remind the reader that from now on $k$ is assumed  to be algebraically
closed. In this section we will describe how to embed an fgc EALA into an
untwisted EALA. Here, we say that an EALA $E$ is {\em fgc\/} if its
centreless core is so, and we say that $E$ is {\em untwisted} if its
centreless core $E_{cc},$ as a Lie torus, is of the form $E_{cc}  = \g \ot R$
for some finite-dimensional simple Lie algebra $\g$ over $k$ and Laurent
polynomial ring $R$ in finitely many variables.\sm

\subsection{Multiloop algebras}\label{ssec:rev-lt}
In order to realize an fgc EALA as a subalgebra of an untwisted EALA, we need
some preparation starting with a review of fgc Lie tori which by \cite{abfp2}
are multiloop algebras $L=L(\g,\bs)$. They are constructed as follows: $\g$
is  a simple finite-dimensional Lie algebra and $\bs = (\si_1, \ldots,
\si_n)$ is a family of commuting finite order automorphisms. We will denote
the order of $\si_i$ by $m_i$. We fix once and for all a compatible set
$(\ze_\ell)_{\ell \in \Bbb{N}}$ of primitive $\ell$-th roots of unity, i.e.
$\zeta_{n\ell}^n = \zeta_{\ell}$ for $n \in \N$.
 The second ingredient are two rings, \begin{equation*} 
R =  k[t_1^{\pm 1}, \ldots, t_n^{\pm 1}] \quad \hbox{and} \quad S = k [
t_1^{\pm \frac{1}{m_1}}, \ldots, t_n^{\pm \frac{1}{m_n}}].
\end{equation*}
For convenience we set $z_i = t_i^\frac{1}{m_i}.$ Thus $z_i^{m_i} = t_i$ and
$S = k [ z_1^{\pm{1}}, \ldots, z_n^{\pm 1}]$.

Let $\Lambda = \Z^n$. For $\la = (\la_1, \cdots , \la_n) \in \Lambda$ let
\[z^\la = z_1^{\la_1}\cdots z_n^{\la_n} := t_1^\frac{\la_1}{m_1}\cdots t_n^\frac{\la_n}{m_n}.\]

The $k$-algebra $S$ has a natural $\Lambda$-grading by declaring that $z^\la$
is of degree $\la.$ Then $R$ is a graded subalgebra of $S$ whose homogeneous
components have degree belonging to the subgroup
\[ \Xi= m_1\ZZ \oplus \cdots \oplus m_n\ZZ  \subset \Lambda. \]
Note that $\Xi \simeq \ZZ^n.$

We set $\overline{\La} = \Lambda/\Xi$ and let $\overline{\phantom{0}} :
\Lambda \to \overline{\La}$ denote the canonical map. After the natural
identification of $\Xi$ with $\Z^n$, this is nothing but the canonical map $
\overline{\phantom{0}} : \ZZ^n \to \ZZ/m_1\ZZ \oplus \cdots \oplus
\ZZ/m_n\ZZ.$

The automorphisms $\si_i$ can be simultaneously diagonalized. For $\bar{\la}
= (\overline{\la_1}, \cdots , \overline{\la_n}) \in \overline{\Lambda}$  we
set
\[ \g^{\bar{\la}} = \{ x\in \g : \si_i(x) = \ze_{m_i}^{\bar \la_i} x, \; 1 \le i \le n\}\]
then $\g = \ts \bigoplus_{\bar{\la} \in \overline{\La}} \g^{\bar{\la}}$.

Note that $\g \ot S$ is a centreless $\Lambda$-graded Lie algebra with
homogeneous subspaces $(\g \ot S)^\la = \g \ot S^\la$. By definition, the
multiloop algebra $L(\g, \bs)$ is the graded subalgebra of $\g \otimes S$
given by
\begin{equation} \label{ssec:rev-lt1}
 L = L(\g, \bs) = \ts \bigoplus_{\la \in \overline{\La}} \, \g^{\bar{\la}} \ot z^{\la} \subset \g \ot S. \end{equation}
Note that the $\La$-grading of $L$ is given by $L^\la = L \cap (\g \ot S)^\la
= \g^{\bar{\la}} \ot z^\la$. The grading group of $L$ is
\[ \La_L := \Span \{ \la \in \La : L^\la \ne 0 \} =  \Span \{ \la \in \La: \g^{\bar{\la}} \ne 0 \} \subset \La.\]

We shall later see that in the cases we are interested in, namely those
related to the realization of Lie tori and EALAs, we always have $\La_L =
\La.$

\subsection{The EALA construction with $L(\g,\bs)$ as centreless core}\label{ssec:msl}
From now on we consider an EALA $E$ whose centreless core is fgc. By
\cite[Prop.~3.2.5 and Th.~3.1]{abfp2} one then knows that $E_{cc}$ is a
multiloop algebra $L(\g, \bs)$ with $\g$ simple and $\bs$ as above. The
(admittedly delicate) choice of $\bs$ is such that the $\La$-grading of
$L(\g, \bs)$ yields the $\La$-grading of the Lie torus $E_{cc}$. With such a
choice $\g^0$ is simple.

By \cite{bn, GP1} the ring $R$ is canonically isomorphic to the centroid
$\Ctd_k(L)$ of the Lie algebra $L = L(\g, \bs).$ More precisely, for $r\in R$
let $\chi_r \in \End(L)$ be the homothety $l \mapsto rl$. Then the centroid
$\Ctd_k(L)$ of $L$ is $\{ \chi_ r : r\in R\}$ and the map $r \mapsto \chi_r$
is a $k$-algebra isomorphism $R \to \Ctd_k(L).$ We will henceforth identify
these two rings without further mention and view $L$ naturally as an $R$-Lie
algebra.

Let $\eps \in S^*$ be the linear form defined by $\eps(z^\la) = \de_{\la,
{\bf 0}}$. We will also view $\eps$ as a symmetric bilinear form on $S$
defined by $\eps(s_1, s_2) = \eps(s_1 s_2)$ for $s_i \in S$. We denote by
$\ka$ the Killing form of $\g$ and define a bilinear form $\inpr_S$ on $\g
\ot S$ by
\[ (x_1 \ot s_1 \mid x_2 \ot s_2)_S = \ka(x_1, x_2) \, \eps(s_1 s_2),\]
i.e., $\inpr_S = \ka \ot \eps$. The bilinear form $\inpr_S$ is invariant,
nondegenerate and symmetric. By \cite[Cor.~7.4]{NPPS}, the restriction
$\inpr_L$ of $\inpr_S$ to the subalgebra $L(\g,\bs)$ has the same properties
and is up to a scalar the only such bilinear form.

Since $S$ is $\La$-graded, every $\theta \in \Hom_\ZZ(\La, k)$ gives rise to
a derivation $\pa_\theta$ of $S$, defined by $\pa_\theta(z^\la) =
\theta(\la)z^\la$  for $\la\in \La$. We get a subalgebra $\euD_S = \{
\pa_\theta : \theta \in \Hom_\ZZ( \La, k) \}$ of degree $0$ derivations of
$S$. The map $\theta \mapsto \pa_\theta$ is a vector space isomorphism. It is
well-known, cf.\ \eqref{Volodya1} and  \eqref{derbracket}, that $\Der_k(S)  =
S \euD_S$. It follows that $\Der_k(S)$  is a $\La$-graded Lie algebra with
homogeneous subspace $(\Der_k(S))^\la = S^\la \euD$. The analogous facts hold
for the $\Xi$-graded algebra $R$, i.e., putting $\euD_R = \{ \pa_\xi : \xi
\in \Hom_\ZZ(\Xi, k)\}$ the Lie algebra $\Der_k(R) = R \euD_R$ is
$\Xi$-graded with $\Der_k(R)^\xi = R^\xi \euD$. But we can identify $\euD_S$
with $\euD_R$ and then denote $\euD = \euD_S = \euD_R$ since the restriction
map $\Hom_\ZZ(\La, k)\to \Hom_\ZZ(\Xi, k)$ is an isomorphism of vector spaces
(this because $\La/\Xi = \Gamma$ is a finite group and $k$ is torsion-free).
Hence $\Der_k(R) = R \euD \subset \Der_k(S) = S \euD$. Observe that the
embedding $\Der_k(R) \subset \Der_k(S)$ preserves the degrees of the
derivations.\footnote{Since $S$ is an \'etale covering of $R$, in fact even
Galois, every $k$-linear derivation $\delta \in \Der_k(R)$ uniquely extends
to a derivation $\hat \delta$ of $S$. Under our inclusion $\Der_k(R) \subset
\Der_k(S)$ we have $\delta=\hat \delta$.}

One easily verifies that $z^\la \pa_\theta$ is skew-symmetric with respect to
the bilinear form $\eps$ of $S$ if and only if $\theta(\la) = 0$. The
analogous fact  holds for $R$:
\begin{align*}  \SDer_k(R) &= \{ \delta \in \Der_k(R) : \delta \hbox{ is skew-symmetric} \} \\ & = \ts \bigoplus_{\xi \in \Xi} z^\xi \{ \pa_\theta : \theta \in \Hom_\ZZ(\Xi, k), \theta (\xi) = 0 \} \subset \SDer_k (S).
\end{align*}

We now consider derivations of $\g \ot S$ and of $L$. It is well-known that
the map $\delta \mapsto \Id_\g \ot \delta$ identifies $\Der_k(S)$ with the
subalgebra $\CDer_k(\g \ot S)$ of centroidal derivations of $\g \ot S$; it
maps $\SDer_k(S)$ onto $\SCDer_k(\g \ot S)$. Analogously, $\Der_k(R)\to
\CDer(L)$, $\delta \mapsto (\Id_\g \ot \delta)|_{L}$ is an isomorphism of Lie
algebras \cite{P}.\footnote{Note that in the expression $\Id_\g \ot \delta$
the element $\delta \in \Der_k(R)$ is viewed as an element of $\Der_k(S)$
under the inclusion $\Der_k(R) \subset \Der_k(S)$ described above.} One can
check that under this isomorphism $\SDer_k(R)$ is mapped onto $\SCDer_k(L)$.
The embedding $ \SDer_k(R) \subset \SDer_k(S)$ of above then gives rise to an
embedding
\begin{equation}
  \label{ssec:msl1} \SCDer_k(L) \subset \SCDer_k(\g \ot S).
\end{equation}

To construct an EALA $E$ with $E_{cc } = L$ we follow \ref{eala-cons} and
take a graded subalgebra $D\subset \SCDer_k (L) \simeq \SDer_k (R)$ such that
the evaluation map $\ev \co \La \to {D^0}^*$ is injective. This then provides
us with the central cocycle $\si_D \co L \times L \to C=D^{\gr *}$. Using
Theorem~\ref{n:mainconst} it follows that $ \EA(L,D,\ta)$ is an EALA with
centreless core $L$ for any affine cocycle $\ta \co D \times D \to C$ and,
conversely, any EALA $E$ with $E_{cc} \simeq L$ is isomorphic to $\EA(L,
D,\ta)$ for appropriate choices of $D$ and $\ta$.

\begin{example}[\bf untwisted EALA] Let $\si_i = \Id$ for all $i.$ Then  $S=R$, $L=\g \ot S = \g \ot R.$ Using the invariant bilinear form $\inpr_S$ on $\g \ot S$ described above we observe that for any $D\subset \SCDer(L)$ as above and affine cocycle $\ta$ we have an EALA $\EA(\g \ot S, D, \ta)$. Any EALA isomorphic to such an EALA will be called {\em untwisted}.
\end{example}

\begin{remark}\label{scaling}Note that if we replace $\inpr_S$ by $s \inpr_S$ for some $s \in k^\times$, then, as explained in Remark \ref{rem:scalar}, the resulting EALA is  $\EA(\g \ot S, D, s\ta),$ which is again an untwisted EALA.
\end{remark}

By taking into account that the invariant bilinear form $\inpr_L$ on $L =
L(\g, \bs)$ is by assumption the restriction of $\inpr_S$ to $L,$ the
following lemma is immediate from the above.

\begin{lemma}\label{subeals-split}
Let $E=\EA(L,D,\ta)=L \oplus C \oplus D$ be an EALA with centreless core
$L=L(\g,\bs)$ as in \eqref{ssec:rev-lt1}. Assume, without loss of generality,
that the invariant bilinear form $\inpr_E$ of $E$ is such that its
restriction to $L$ is the form  $\inpr_L$ above. By means of
\eqref{ssec:msl1} view $D$ as a subalgebra of $\SCDer(\g \ot S)$. Then
\begin{equation} \label{subeals-split1} E_S = \EA(\g \ot S, D, \ta) = \g \ot
S \oplus C \oplus D \end{equation} is an untwisted EALA containing $E$ as a
subalgebra.
\end{lemma}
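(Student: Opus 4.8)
The plan is to verify that the construction of Lemma~\ref{subeals-split} is internally consistent and that $E$ sits inside $E_S$ as a subalgebra, with the embedding compatible with all the structure maps. The key observation is that the definition of $E_S = \EA(\g\ot S, D,\ta)$ uses \emph{the same} data $D$ and $\ta$ as $E$, only with the larger centreless core $\g\ot S$ in place of $L$. So almost everything has already been set up in \S\ref{sec:subEALA}, and what remains is to check that these data satisfy the EALA axioms for $\g\ot S$ and that the inclusion $L\subset \g\ot S$ induces an inclusion $E\hookrightarrow E_S$.

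First I would confirm that $(\g\ot S, D, \ta)$ is legitimate EALA data in the sense of \ref{eala-cons}. By \eqref{ssec:msl1} we have $D\subset \SCDer_k(L)\subset \SCDer_k(\g\ot S)$, so $D$ is a graded subalgebra of $\SCDer_k(\g\ot S)$. The evaluation condition $\ev\co \La\to (D^0)^*$ injective is a property of $D^0$ and $\La$ alone, hence unchanged. The affine cocycle $\ta\co D\times D\to C=D^{\gr *}$ is literally the same map, and its defining conditions (the $2$-cocycle identity and the two symmetry relations in \ref{eala-cons}) involve only $D$ and $C$, so they persist. Thus by Theorem~\ref{n:mainconst}(a), $E_S=\EA(\g\ot S, D,\ta)$ is an EALA with centreless core $\g\ot S$, and it is untwisted by the very definition recalled in \S\ref{sec:subEALA}. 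The invariant form on $\g\ot S$ is $\inpr_S=\ka\ot\eps$, whose restriction to $L$ is $\inpr_L$ by the choice made via \cite[Cor.~7.4]{NPPS}; this is exactly the compatibility hypothesis in the statement of the lemma.

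Next I would exhibit the embedding. Since $L=L(\g,\bs)$ is a graded subalgebra of $\g\ot S$, the inclusion $\iota\co L\hookrightarrow \g\ot S$ is a Lie algebra homomorphism that is $\La$-graded, hence compatible with the $D$-action (because $D$ acts through degree derivations, and the inclusion respects degrees). The crucial point is that the central cocycle $\si_D$ is consistent: for $l_1,l_2\in L$, the value $\si_D(l_1,l_2)(d)=(d(l_1)\mid l_2)$ computed in $\g\ot S$ with $\inpr_S$ agrees with the one computed in $L$ with $\inpr_L$, precisely because $\inpr_S|_L=\inpr_L$ and $d(l_i)\in L$. Therefore the map
\begin{equation}\label{eq:embed}
  E=L\oplus C\oplus D \longrightarrow \g\ot S\oplus C\oplus D=E_S,
  \qquad l+c+d\mapsto \iota(l)+c+d,
\end{equation}
is the identity on $C$ and $D$ and equals $\iota$ on $L$. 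Comparing the bracket formula \eqref{n:gencons3} for $E$ and for $E_S$ term by term -- the $L$-bracket goes to the $\g\ot S$-bracket under $\iota$, the $D$-action terms agree by gradedness, and the $\si_D$ and $\ta$ contributions match by the compatibility of forms just noted -- shows that \eqref{eq:embed} is a homomorphism of Lie algebras; it is injective because $\iota$ is.

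The step I expect to require the most care is the cocycle compatibility: one must check that the two potentially different central cocycles, $\si_D^L\co L\times L\to C$ defined via $\inpr_L$ and $\si_D^{\g\ot S}\co (\g\ot S)\times(\g\ot S)\to C$ defined via $\inpr_S$, actually restrict to the same map on $L\times L$, and moreover that both take values in the \emph{same} space $C=D^{\gr *}$. The equality of values is immediate from $\inpr_S|_L=\inpr_L$ together with $D(L)\subset L$; the identification of the target spaces $C$ is automatic because $C$ is built from $D$ alone, not from the centreless core. Once this is settled, the verification that \eqref{eq:embed} respects the bracket is the routine term-by-term comparison sketched above, and the lemma follows.
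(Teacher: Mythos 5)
Your proposal is correct and follows exactly the route the paper intends: the paper gives no written proof, declaring the lemma ``immediate from the above,'' i.e.\ from the embedding \eqref{ssec:msl1}, the fact that $\inpr_L=\inpr_S|_{L}$, and the general construction of \ref{eala-cons}. You have simply made the implicit verifications explicit (same $D$, $C$, $\ta$; cocycle compatibility via $\inpr_S|_L=\inpr_L$ and $D(L)\subset L$; term-by-term comparison of \eqref{n:gencons3}), which is precisely what ``immediate'' means here.
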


\begin{remark}\label{clarification} That there is no loss of generality on the choice of $\inpr_E$ follows from Remark \ref{scaling}. Indeed, scaling a given form to produce $\inpr_L$ when restricted to L will result in replacing $\EA(\g \ot S, D, \ta)$ by $\EA(\g \ot S, D, s\ta).$ The relevant conclusion that $E$ is a subalgebra of an untwisted EALA remains valid.
\end{remark}

The following lemma will be useful later.

\begin{lemma}\label{lift-fix} Let $E=L \oplus C \oplus D$ be an EALA with centreless core an fgc Lie torus.

{\rm (a)} Let $g\in \Aut_k(L)$. Then the endomorphism  $f_g$ of $E$ defined
by
\begin{equation} \label{lift-fix1}
 f_g (l \oplus c \oplus d) = g(l) \oplus c \oplus d \end{equation}
is an automorphism of $E$ if and only if $g \circ d = d \circ g$ holds for
all $d\in D$. \sm

{\rm (b)} The map $g \mapsto f_g$ is an isomorphism between the groups
\begin{align*}
 \Aut_D (L) = \{ g\in \Aut_k(L) : g \circ d = d \circ g \hbox{ for all } d\in D\} \end{align*}
and $\{f \in \Aut_k(E) : f (L) = L, f|_{C\oplus D} = \Id \}$. In particular,
for any $g$ in
 \begin{equation} \label{lift-fix2}
 \{ g\in \Aut_R(L) : g (L^\la) = L^\la \hbox{ for all } \la \in \La \} \subset \Aut_D(L)\end{equation}
the map $f_g$ of \eqref{lift-fix1} is an automorphism of $E$.
\end{lemma}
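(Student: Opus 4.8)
The plan is to check directly, via the bracket formula \eqref{n:gencons3}, exactly when $f_g$ fails to be a Lie homomorphism, and to isolate the one nontrivial point. Since $g \in \Aut_k(L)$ and $f_g$ acts as the identity on $C \oplus D$, the map $f_g = g \oplus \Id_C \oplus \Id_D$ is a $k$-linear bijection, so it is an automorphism precisely when it preserves $[\cdot,\cdot]_E$. For $e_i = l_i + c_i + d_i$ I would compute $[f_g e_1, f_g e_2]_E - f_g[e_1,e_2]_E$ component by component. The $D$-component equals $[d_1,d_2]_D$ on both sides and cancels. The $L$-component difference is $\big(d_1\cdot g(l_2) - g(d_1 \cdot l_2)\big) - \big(d_2 \cdot g(l_1) - g(d_2\cdot l_1)\big)$, which vanishes for all $e_1,e_2$ if and only if $g \circ d = d \circ g$ for every $d \in D$; testing $e_1 = d \in D$, $e_2 = l \in L$ shows this is \emph{forced} when $f_g$ is an automorphism, giving the ``only if'' direction of (a) at once. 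The remaining $C$-component difference is $\si_D(g l_1, g l_2) - \si_D(l_1, l_2)$, and the whole content of the ``if'' direction is to show this vanishes under the commuting hypothesis.

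For that I would argue that $g$ is forced to be an \emph{isometry} of $\inpr_L$. First, commuting with $D^0 \subset \euD$ makes $g$ preserve the $\La$-grading: each $L^\la$ is the simultaneous $D^0$-eigenspace on which $d\in D^0$ acts by $\ev_\la(d)$, and by the injectivity of $\ev \co \La \to (D^0)^*$ built into the EALA data these eigenvalue systems are pairwise distinct, so an operator commuting with $D^0$ must stabilize each $L^\la$. The key step---and this is where the fgc hypothesis enters---is that such a grading-preserving $g$ preserves $\inpr_L$ on the nose. I would deduce this from the intrinsic $R$-bilinear Killing form $\ka_R$ of $L$ over its centroid $R = \Ctd_k(L)$, which is nondegenerate because $L$ is fgc: one has $\inpr_L \propto \eps \circ \ka_R$, where $\eps \co R \to k$ is the ``constant term'' functional $\chi^\xi \mapsto \de_{\xi,0}$, while any $g \in \Aut_k(L)$ satisfies $\ka_R(g l_1, g l_2) = g_*\big(\ka_R(l_1,l_2)\big)$ for $g_*(\chi) = g\chi g^{-1}$ the induced automorphism of $R$ (this follows from $\ad_L(g l) = g\,(\ad_L l)\,g^{-1}$ and the $g_*$-semilinearity of $g$ over $R$, via conjugation-invariance of the $R$-trace). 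When $g$ preserves the grading, $g_*$ is a \emph{graded} algebra automorphism of $R$ fixing $R^0 = k$ pointwise, whence $\eps \circ g_* = \eps$; therefore $(g l_1 \mid g l_2) = (l_1 \mid l_2)$. Consequently $\si_D(g l_1, g l_2)(d) = (d(g l_1)\mid g l_2) = (g(d l_1) \mid g l_2) = (d l_1 \mid l_2) = \si_D(l_1,l_2)(d)$, the $C$-component difference vanishes, and $f_g$ is an automorphism. This isometry claim is the main obstacle: the naive computation only yields $\si_D(g l_1, g l_2) = s_g\,\si_D(l_1, l_2)$ for a scalar $s_g$ arising from uniqueness-up-to-scalar of the form, and the real work is to see $s_g = 1$, which is exactly the $g_*$-invariance of the constant-term functional.

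For part (b), the map $g \mapsto f_g$ is visibly a group homomorphism ($f_{g_1} f_{g_2} = f_{g_1 g_2}$) and injective (read off $g$ from $f_g|_L$). For surjectivity I would take $f \in \Aut_k(E)$ with $f(L) = L$ and $f|_{C \oplus D} = \Id$ and set $g := f|_L$, a linear bijection of $L$. Here I would be careful that $L$ is \emph{not} a subalgebra of $E$, since $[l_1,l_2]_E = [l_1,l_2]_L + \si_D(l_1,l_2)$ has a $C$-part. Applying $f$ and using $f|_C = \Id$, the $L$-component of $f([l_1,l_2]_E) = [f l_1, f l_2]_E$ reads $g([l_1,l_2]_L) = [g l_1, g l_2]_L$, so $g \in \Aut_k(L)$ and manifestly $f = f_g$; part (a) then places $g$ in $\Aut_D(L)$. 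Finally, for the ``in particular'' inclusion I would note that an $R$-linear $g$ preserving every $L^\la$ commutes with all homotheties $\chi^\xi \in \Ctd_k(L)$ and, by grading-preservation, with all $\pa_\theta \in \euD$; since every $d \in D^\xi$ has the form $\chi^\xi \pa_\theta$, such $g$ commutes with $D$ and hence lies in $\Aut_D(L)$, so $f_g$ is an automorphism by (a).
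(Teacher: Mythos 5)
Your proof is correct and follows the same overall route as the paper's: both reduce the automorphism condition for $f_g$ to the two requirements that $g$ commute with $D$ and that $\si_D(g l_1, g l_2)=\si_D(l_1,l_2)$, both derive the second requirement from the first by showing that $g$ is an isometry of $\inpr_L$, and the treatment of part (b) and of the inclusion \eqref{lift-fix2} (an $R$-linear, grading-preserving $g$ commutes with $\Ctd_k(L)\euD \supset D$) is essentially identical. The one place where you genuinely diverge is the isometry step. The paper disposes of it with a citation: by \cite[Cor.~7.4]{NPPS} every automorphism of the multiloop algebra is orthogonal with respect to $\inpr_L$. You instead prove only the weaker statement actually needed --- that $g$ is an isometry once it commutes with $D$ --- by first observing that commuting with $D^0$ forces $g$ to preserve the $\La$-grading (via the injectivity of $\ev_{D^0}$ built into the EALA data), and then writing $\inpr_L$, up to a scalar, as the constant-term functional $\eps$ composed with the $R$-bilinear trace form $\ka_R$, so that $(g l_1 \mid g l_2) = \eps\big(g_*(\ka_R(l_1,l_2))\big) = (l_1 \mid l_2)$ because the induced automorphism $g_*$ of the centroid is graded and fixes $R^0=k$ pointwise. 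This is a correct, self-contained replacement for the citation; its merit is that it makes explicit where the scalar ambiguity in ``the invariant form is unique up to scalar'' gets resolved (namely $\eps\circ g_*=\eps$), which is exactly the point a reader might worry about when checking the paper's one-line appeal to \cite{NPPS}. The cost is that your argument applies only to grading-preserving automorphisms rather than to all of $\Aut_k(L)$, but that is all the lemma requires.
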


\begin{proof}
  (a) It is immediate from \eqref{lift-fix1} and the multiplication rules \eqref{n:gencons3} that $f_g$ is an automorphism of $E$ if and only if \begin{enumerate}
    \item[(i)]  $ g \circ d = d \circ g $ holds for all $d\in D$ and
    \item[(ii)] $\si\big( g(l_1), \, g(l_2)\big) = \si(l_1, l_2)$ holds for all $l_i \in L$.
  \end{enumerate}
To show that the second condition is implied by the first, recall that $\si$
is defined by \eqref{eala-cons0}, whence (ii) is equivalent to $\big( (d
\circ g)(l_1) \mid g(l_2) \big) = \big( d(l_1) \mid l_2)$. Because of (i)
this holds as soon as $g$ is orthogonal with respect to $\inpr$. But this is
exactly what \cite[Cor.~7.4]{NPPS} says.

The first part of (b) is immediate. Any automorphism stabilizing the
homogeneous spaces $L^\la$ commutes with $\euD$ viewed as a subset of
$\SCDer(L)$. If it is also $R$-linear it commutes with all of $\SCDer(L)$ and
so in particular with the subalgebra $D\subset \SCDer(L)$. \end{proof}

\section{Lifting  automorphisms in the untwisted case} \label{sec:lift-split}

In this section we assume that $E$ is an extended affine Lie algebra whose
centreless core $E_{cc}$ is untwisted in the sense that $E_{cc} = L = \g \ot
R.$ In other words $L = L(\g, \text{\bf \rm Id}).$ In particular $R = S$ and
$t_i = z_i.$

\subsection{Notation.} We let $\bG$ and $\widetilde{\bG}$ be the adjoint and simply connected algebraic $k$--groups corresponding to $\g.$ Recall that we have a central isogeny
\begin{equation}\label{isogeny}
1 \to \bmu \to \wti{\G} \to \bG \to 1
\end{equation}
where ${\bmu}$ is either $\bmu_m$ or $\bmu_2 \times \bmu_2.$

The algebraic $k$--group of automorphisms of $\g$ will be denoted by
$\bAut(\g).$ For any (associative commutative unital) $k$-algebra $K$ by
definition $\bAut(\g)(K)$ is the (abstract) group $\Aut_K(\g \otimes K)$ of
automorphisms of the $K$--Lie algebra $\g \otimes K.$

Recall that we have a split exact sequence of $k$--groups (see \cite{SGA3}
Exp. XXIV Th\'eor\`eme 1.3 and Proposition 7.3.1)
\begin{equation}\label{splitexact}
1 \to \bG \to \bAut(\g) \to \bOut(\g) \to 1
\end{equation}
where $\bOut(\g)$ is the finite constant $k$--group $\Out(\g)$ corresponding
to the group of symmetries of the Dynkin diagram of $\g$.\footnote{The group
$\bOut(\g)$ is denoted by $\bAut\big(\text{\rm Dyn}(\g)\big)$ in
\cite{SGA3}.}

There is no canonical splitting of the above exact sequence. A splitting is
obtained (see \cite{SGA3}) once we fix a base of the root system of a Killing
couple of $\wti{\bG}$ or $\bG.$ Accordingly, we henceforth fix a maximal
(split) torus $\widetilde{\mathbf T}\subset \widetilde{\dG}$. Let $\Sigma =
\Sigma(\widetilde{\dG},\widetilde{\mathbf T})$ be the root system of
$\widetilde{\dG}$ relative to $\widetilde{\mathbf T}$. We fix a Borel
subgroup $\widetilde{\mathbf T}\subset \widetilde{\mathbf  B}\subset
\widetilde{\dG}$. It  determines a system of simple roots
$\{\alpha_1,\ldots,\alpha_\ell \}$. Fix a  Chevalley basis $\lbrace
H_{\alpha_1},\ldots H_{\alpha_\ell},\ X_{\alpha},\ \alpha\in\Sigma\rbrace $
of $\dg$ corresponding to the pair  $(\widetilde{\mathbf
T},\widetilde{\mathbf B})$. The Killing couple $(\widetilde{\bB},
\widetilde{\bT})$ induces a Killing couple $(\bB,\bT)$ of $\bG.$

In what follows we need to consider the $R$-groups obtained by the base the
change $R/k$ of all of the algebraic $k$--groups described above.  Note that
$\bAut(\g)_R = \bAut(\g \otimes R).$ Since no confusion will arise we will
omit the use of the subindex $R$ (so that for example (\ref{isogeny}) and
(\ref{splitexact}) should now be thought as an exact sequence of group
schemes over $R$).

\begin{theorem}\label{lifting theorem} The  group
$\Aut_{R}(\g\otimes R)$ is in the image of the map $\rescc$ of
\eqref{def:rescc}, i.e., every $R$-linear automorphism of $\g \otimes R$ can
be lifted to an automorphism of $E$.
\end{theorem}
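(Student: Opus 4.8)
The plan is to exploit that $\Image(\rescc)$ is a subgroup of $\Aut_k(L)$ and to produce all of $\Aut_R(\g\ot R)$ inside it. Writing $L = \g\ot R$, we have $\Aut_R(L) = \bAut(\g)(R)$, and by the split sequence \eqref{splitexact} together with the connectedness of $R$ (so that $\bOut(\g)(R) = \Out(\g)$),
\[ \Aut_R(L) = \bG(R)\rtimes\Out(\g). \]
The diagram automorphisms in $\Out(\g)$ and, more generally, all $R$-linear grading-preserving automorphisms act trivially on $R$ and hence lie in $\Aut_D(L)$; by Lemma~\ref{lift-fix}(b) they belong to $\Image(\rescc)$. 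Since such an automorphism is determined by its restriction to $L^0 = \g\ot 1$, these account for all of $\bAut(\g)(k) = \Aut_k(\g)$, in particular for the constant torus $\bT(k)$ and for $\Out(\g)$. It therefore remains to realize the elements of $\bG(R)$ that genuinely move the $\La$-grading.

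Next I would dispose of the unipotent and coroot-torus part. For a root vector $X_\alpha$ and $r\in R$ the derivation $\ad_L(X_\alpha\ot r)$ is nilpotent, so the inner automorphism $\Ad(x_\alpha(r)) = \exp(\ad_L(X_\alpha\ot r))$ is elementary and lies in $\Image(\rescc)$ by Proposition~\ref{li-elem}; thus every root subgroup is covered. Using the Steinberg relations $h_\alpha(u) = w_\alpha(u)\,w_\alpha(1)^{-1}$ with $w_\alpha(u) = x_\alpha(u)\,x_{-\alpha}(-u^{-1})\,x_\alpha(u)$ and $u\in R^\times$ (note $u^{-1}\in R$, since the units of $R$ are the $c\,t^\la$), each $\Ad(h_\alpha(u))$ is again a product of elementary automorphisms, and hence so is the image of the simply connected torus, whose cocharacter lattice is the coroot lattice $Q^\vee$. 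Together with the root subgroups this places all of $\Ad(\widetilde{\bG}(R))$ in $\Image(\rescc)$, modulo the (standard but technical) point that $\widetilde{\bG}(R)$ is generated by its root subgroups and maximal torus over the Laurent polynomial ring $R$.

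The genuine obstacle is the remaining part of the adjoint torus $\bT(R)$. Because $\bG$ is adjoint, its cocharacter lattice is the coweight lattice $P^\vee\supsetneq Q^\vee$, and for $\eta\in P^\vee$ and a monomial $t^\la$ the element $\eta(t^\la)$ acts by $x\ot t^\mu\mapsto x\ot t^{\,\mu + \langle\alpha,\eta\rangle\la}$ on $x\in\g_\alpha$. These maps are $R$-linear but shift the grading by amounts depending on the root, so they lie neither in $\Ad(\widetilde{\bG}(R))$ (they represent the nontrivial classes in $\bG(R)/\Ad(\widetilde{\bG}(R))\hookrightarrow H^1_{\fppf}(R,\bmu)$, cf.\ \eqref{isogeny}) nor in $\Aut_D(L)$, so neither Proposition~\ref{li-elem} nor Lemma~\ref{lift-fix} applies. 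To lift $\varphi = \eta(t^\la)$ I would build $f$ on $E = L\oplus C\oplus D$ directly. The key computation is that, since $\varphi$ is centroid-linear, conjugation sends $d = z^\xi\pa_\theta\in D^\xi$ to
\[ \varphi\, d\, \varphi^{-1} = d - \theta(\la)\,\ad_L(\eta\ot t^\xi), \]
i.e.\ it changes $d$ only by the \emph{inner} derivation attached to $\eta\ot t^\xi\in L$.

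Inner derivations are precisely the corrections permitted in the $L\oplus C$ component of a lift, as in Proposition~\ref{li-elem}(iii). I would therefore set $f$ equal to $\varphi$ on $L$ (up to a $C$-component, as forced by Proposition~\ref{li-elem}(i)), equal to the contragredient of $\varphi$ on $C$, and on $D$ put $d\mapsto d - \theta(\la)(\eta\ot t^\xi) + (\text{a } C\text{-term})$, and then verify that this respects the bracket \eqref{n:gencons3}. The hard part will be checking compatibility with the two cocycles: that $\varphi$ is an isometry for the central cocycle $\si_D$ (which reduces to $\varphi$ preserving $\inpr_L$ through the grading shift) and, most delicately, that the prescribed values on $D$ are consistent with the affine cocycle $\ta$. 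Once $\varphi=\eta(t^\la)$ is lifted for all $\eta\in P^\vee$ and all monomials, the adjoint torus $\bT(R) = \bT(k)\cdot\{\eta(t^\la)\}$ is covered; combining this with the previous steps gives $\Aut_R(\g\ot R)\subseteq\Image(\rescc)$, which is the assertion of the theorem.
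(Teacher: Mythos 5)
Your overall skeleton (the decomposition $\Aut_R(\g\ot R)=\bG(R)\rtimes\Out(\g)$ from \eqref{splitexact}, elementary lifts of $\Ad x_\al(r)$ via Proposition~\ref{li-elem}, and Lemma~\ref{lift-fix}(b) for the grading-preserving constant automorphisms including $\Out(\g)$) coincides with the paper's, but the two places where you depart from the paper are exactly where your argument has genuine gaps. First, you reduce the lifting of $\Ad\wti{\bG}(R)$ to the claim that $\wti{\bG}(R)$ is generated by its root subgroups and maximal torus, and dismiss this as ``standard but technical.'' It is neither: over a multivariable Laurent polynomial ring, elementary generation of a simply connected Chevalley group is a serious $K$-theoretic issue, and in rank one it is known to fail for closely related rings (Cohn's example for $\SL_2$ of a polynomial ring in two variables); no such generation statement is available for $R$ in general. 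The paper avoids this entirely by embedding $E$ into $\wti E=(\g\ot F)\oplus C\oplus D$ for the iterated Laurent series \emph{field} $F=k((t_1))\cdots((t_n))$ (Lemmas~\ref{embed-gen} and~\ref{const-f}); over a field $\wti{\bG}(F)$ \emph{is} generated by root subgroups, each $\Ad x_\al(f)$ lifts elementarily by Proposition~\ref{li-elem}, and Lemma~\ref{embed-gen}(b) guarantees that the resulting lift of an element of $\wti{\bG}(R)$ preserves the subalgebra $E$.

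Second, for the part of $\bG(R)$ not coming from $\wti{\bG}(R)$ you propose to lift the cocharacter values $\eta(t^\la)$, $\eta\in P^{\vee}$, by an explicit formula on $L\oplus C\oplus D$, but you yourself flag that compatibility with the central cocycle $\si_D$ and, above all, with the affine cocycle $\ta$ is left unverified. That verification is the whole difficulty: the existence of consistent $C$-components on $D$ is a nontrivial coboundary condition, and nothing in your sketch shows it is satisfiable for an arbitrary $\ta$. The paper again sidesteps the computation by a descent in the other direction: by Lemma~\ref{newimage} the obstruction class of $g\in\bG(R)$ in $H^1(R,\bmu)\simeq R^\times/(R^\times)^m$ dies in $\wti R=k[t_1^{\pm 1/m},\ldots,t_n^{\pm 1/m}]$, so $g$ comes from $\wti{\bG}(\wti R)$, Step 1 applies over $\wti R$, and Lemma~\ref{embed-gen}(b) shows the lifted automorphism stabilizes $E$. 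Until you either establish the generation statement and carry out the cocycle verification, or replace these steps by such an enlargement-and-descent argument, the proof is incomplete.
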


\begin{proof}
By (\ref{splitexact}) we have
 \begin{equation}  \label{lift-thm1}
  \Aut_{R}(\g\otimes R)=\G(R)\rtimes  \Out(\g). \end{equation}
We will proceed in 3 steps: \begin{enumerate}[(1)]

\item Lifting of automorphisms in the  image of $\widetilde{\G}(R)$ in
    $\G(R)$.

\item Lifting of automorphisms in $\G(R)$.

\item Lifting of the elements of $\Out(\g)$.
\end{enumerate}

To make our proof more accessible we start by recalling the main ingredients
of the construction of $E$, see \ref{gen-data} and \ref{eala-cons}.
\begin{itemize}
  \item[(a)] Up to a scalar in $k$, the Lie algebra $\g \ot R$ has a unique  nondegenerate invariant bilinear form $\inpr_R$, namely $(x\ot r \mid x'\ot r')_R = \ka(x,x')\, \ep(rr')$ where $\ka$ is the Killing form of $\g$, $x,x'\in \g$ and $\eps \in R^*$ is given by $\eps(\sum_{\la \in \La} a_\la t^\la) = a_{\bf 0}$. Recall that $t^\la = t_1^{\la_1} \cdots t_n^{\la_n}$ for $\la = (\la_1, \ldots, \la_n)\in \La = \Z^n$.

  \item[(b)] The Lie algebra $D$ is a $\La$-graded Lie algebra of skew-centroidal derivations of $R$ acting on $\g \ot R$ by $\Id \ot d$ for $d\in D$. Every homogeneous $d\in D$, say of degree $\la$, can be uniquely written as $d=t^\la \pa_\theta$ for some additive map $\theta \co \La \to k$, where
  $\pa_\theta(t^\mu)  = \theta(\mu) t^\mu$ for $\mu \in \La$.

  \item[(c)] The Lie algebra $E$ is constructed using the general construction \ref{gen-data} with $L=\g \ot R$, $D$ as above, $V=C=D^{\gr*}$ with the canonical $D$-action on $L$ and $C$, the central $2$-cocycle of \eqref{eala-cons0} using the bilinear form $\inpr_R$ of (a) above, and some $2$-cocycle $\ta\co D \times D \to C$.
\end{itemize}

In our proofs of steps 1 and 2 we will embed $E$ as a subalgebra of a Lie
algebra $\wti E$ and use the following general result.

\begin{lemma}   \label{embed-gen}
Assume that $R$ is a subring of a commutative associative ring $\wti R$. We
put $\wti L = \g \ot \wti R$.

{\rm (a)} Assume that \begin{itemize}

\item[\rm (i)] the action of $D$ on $R$ extends to an action of $D$ on $\wti
    R$ by derivations.

\item[\rm (ii)]  $\wti \si \co \wti L \times \wti L \to C$ is a central
    $2$-cocycle such that $\wti \si |_{L \times L } = \si$.

\end{itemize}

 Then  $D$ acts on $\wti L$ by $d  (x \ot s) = x \ot d(s)$ for $d\in D$, $x\in \g$ and $s\in \widetilde{R}$. The data $(\wti L, \wti \si, \ta)$ satisfy the conditions of the construction {\rm \ref{gen-data}}, hence define a Lie algebra $\wti E = \wti L \oplus C \oplus D$. It contains $E$ as a subalgebra.

{\rm (b)} Let $\tilde f \in \Aut(\wti E)$ satisfy $\tilde f (L \oplus C) = L
\oplus C$. Then $\tilde f (E) = E$.
\end{lemma}

\begin{proof} The easy proof of (a) will be left to the reader. In (b) it remains to show that $\tilde f(D) \subset L \oplus C \oplus D$. We fix $d\in D$. We then know $\tilde f(d) = \tilde l + \tilde c + \tilde d$ for appropriate $\tilde l \in \wti L$, $\tilde c \in C$ and $\tilde d \in D$. We claim that $\tilde l \in L$. For arbitrary $l \in L$ we have $d \cdot l= [d,l]_E = [d,l]_{\wti E}$ where $[.,.]_E$ and $[.,.]_{\wti E}$ are the products of $E$ and $\wti E$ respectively. Hence $\tilde f\big( d \cdot l\big) = [\tilde f(d), \, \tilde f(l)]_{\wti E} = [\tilde l + \tilde c + \tilde d, \, \tilde f(l)]_{\wti E}$. Denoting by $(\cdot)_{\wti L}$ the $\wti L$-component of elements of $\wti E$, it follows that
\[
  \tilde f\big( d(l)\big)_{\wti L} = [\tilde l, \, \tilde f(l)_{\wti L}]_{\wti E} + [ \tilde d, \tilde f(l)_{\wti L}]_{\wti E}.
\] By assumption for all $x\in L$, $\tilde f(x)_{\wti L}  \in L$. It follows that the last term in the displayed equation and the left hand side lie in $L$. Since $C$ is the centre of $\wti L \oplus C$ we know $\tilde f(C) = C$ whence $(\pr_L \circ \tilde f)(L) = L$ for $\pr_L \co L \oplus C \to L$ the canonical projection. The displayed equation above therefore implies $[\tilde l, L]_{\wti L} \subset L$.

We will prove that this in turn forces $\tilde l \in L$. Indeed, let $\{r_i :
i\in I\}$ be a $k$-basis of $R$ and extend it to a $k$-basis of $\wti R$, say
by $\{s_j : j\in J\}$. Thus $\tilde l  = \sum_i x_i \ot r_i + \sum_j y_j \ot
s_j$ for suitable $x_i$, $y_j \in \g$. For every $z\in \g$ we then have
$[\tilde l , z\ot 1] = \sum_i [x_i, z] \ot r_i + \sum_j [y_j , z] \ot s_j \in
\g \ot R$. Hence $[y_j, z] = 0$ for all $j\in J$. Since this holds for all
$z\in \g$, we get $y_j = 0$ for all $j\in J$ proving $\tilde l \in \g \ot R$.
\end{proof}

After these preliminaries we can now start the proof of Theorem~\ref{lifting
theorem} proper. In what follows we view $R$ as a subring of the iterated
Laurent series field $F=  k((t_1))((t_2)) \cdots ((t_n)).$\footnote{The field
$F$ is more natural to use than the function field $K = k(t_1,  \cdots,
t_n)$. The extensions of forms and derivations of $R$ are easier to see on
$F$ than $K$. There is also a much more important reason: The absolute Galois
group of $F$ coincides with the algebraic fundamental group of $R.$ This fact
is essential in \cite{GP}.}\sm

{\it{Step 1. Lifting of automorphisms of $\bG(R)$ coming from
$\widetilde{\G}(R)$.}}

We will follow the strategy suggested by Lemma~\ref{embed-gen} and construct
a Lie algebra $\wti E = (\g\ot F) \oplus C \oplus D$ containing $E=(\g \ot R)
\oplus C \oplus D$ as subalgebra, and then show that if $g\in {\wti  \dG}(R)
\subset {\wti \dG}(F),$ then $\Ad g \in \Aut_F(\g \otimes F)$ can be lifted
to an automorphism of $\wti E$ that stabilizes $E$ and whose image under
$\rescc$ is precisely $\Ad g \in \Aut_{R}(\dg \ot R)$.

 The following lemma implies that the conditions of Lemma~\ref{embed-gen}(a) are satisfied.

\begin{lemma}\label{const-f}
  {\rm (a)} The linear form $\eps\in R^*$ extends to a linear form $\wti \eps$ of $F$.

  {\rm (b)} The bilinear form $\inpr^{\wti{}}$ defined by $(x\ot f\mid x'\ot f')^{\wti{}} = \ka(x,x') \, \wti \eps (ff')$ for $x,x'\in \g$, $f,f'\in F$, is an invariant symmetric bilinear form extending the bilinear form $\inpr$ of $\g \ot R$.

  {\rm (c)} Every derivation $d\in D$ extends to a derivation $\tilde d$ of $F$ such that \begin{enumerate}


   \item[\rm (i)] $\tilde d$ is skew symmetric with respect to the bilinear form $\inpr^{\wti{}}$,

   \item[\rm (ii)] $d \mapsto \tilde d$ is an embedding of $D$ into $\Der_k (F)$.

   \item[\rm (iii)] Every $d\in D$ acts on $\wti L = \g \ot F$ by  the derivation $\Id\ot \tilde d$ which is skew-symmetric with respect to the bilinear form $\inpr^{\wti{}}$.
 \end{enumerate}

   {\rm (d)} Let $\si_D \co \wti L \times \wti L \to D^*$ be the central $2$-cocycle of \eqref{eala-cons0} with respect to the action of $D$  on $\widetilde{L}$ defined in {\rm (c)}. Let $\pr \co D^* \to C$ be any projection of $D^*$ onto $C$ whose restriction to $C \subset D^*$ is the identity map. Then $\wti \si = \pr \circ \si_D \co \wti L \times \wti L \to C=D^{\gr*}$ is a central $2$-cocycle such that $\wti \si |_{L \times L} $ is the central $2$-cocycle appearing in the construction of $E$.
\end{lemma}

\begin{proof} An {\it arbitrary} $k$--derivation of $R$ extends to a $k$--derivation of $F$. To see this use the fact that $\Der_k(R)$ is a free $R$--module admitting the degree derivations $\pa_i = t_i \pa/\pa t_i$ as a basis. It is thus sufficient to show that the $\pa_i$ extend to $k$--derivations of $F$, but this is easy to see.
The rest of the proof is straightforward and will be left to the reader.
\end{proof}

We can now apply Lemma~\ref{embed-gen}(a) and get a Lie algebra $\wti E =
\wti L \oplus C \oplus D,$ with $\wti L = \g \ot F,$ containing  $E= L \oplus
C \oplus D$ as a subalgebra.

Since $\ad X_\al$, $\al \in \Sigma$, is a nilpotent derivation, $\exp( \ad f
X_\al)$  is an elementary automorphism of $\g \ot F$ for all $f \in F$. It is
well-known that, since $F$ is a field, the group $\widetilde{\dG}(F)$ is
generated by root subgroups $U_{\alpha} = \{ x_{\alpha}(f)\mid f \in F\}$,
$\alpha\in\Sigma$ and that $\Ad x_\al(f) = \exp( \ad f X_\al)$. By
Proposition~\ref{li-elem}, $\Ad  x_\al(f)$ lifts to an automorphism of $\wti
E$ which maps $\g \ot F $ to $(\g \ot F) \oplus C$ and such that its $(\g \ot
F)$-component is $\Ad x_\al(f)$. Consequently, for any $g\in
\widetilde{\dG}(F)$ there is an automorphism $\tilde f_{g}\in
\Aut_{k}(\widetilde{E})$ such that $(\pr_{\g\otimes F_n}\circ \tilde
f_g)|(\g\otimes F)=\Ad g \in \Aut_{F}(\g\otimes F)$. Moreover, again by
Proposition~\ref{li-elem}, $\tilde f_g(C) = C$, whence $\tilde f_g(L \oplus C
) = L \oplus C$ whenever $g \in \widetilde{\bf G}(R)$. Therefore, by
Lemma~\ref{embed-gen}, we get $\tilde f_g(E) = E$. This finishes the proof of
{\it Step 1}.
\medskip

{\it{Step 2. Lifting  automorphisms from $\G(R).$ }}

We begin with a preliminary simple observation.

\begin{lemma}\label{newimage} There exist an integer $m > 0$ such that the algebra $\widetilde{R}=k[t_1^{\pm \frac{1}{m}},\ldots, t_n^{\pm \frac{1}{m}}]$ has the following property: All the elements of $\bG(R)$, when viewed as elements of $\bG(\wti{R})$, belong to the image of $\wti{\bG}(\wti{R})$ in $\bG(\wti{R}).$
\end{lemma}

\begin{proof} Recall that $H^1(R, \bmu_m) \simeq R^\times/{(R^\times)}^m$. Let $m$  be the order of $\bmu(k)$  (if $\bmu = \bmu_2 \times \bmu_2$ we can take $m = 2$ instead of $m = 4.)$ Consider the exact sequence
$$\wti{\bG}(R) \to \bG(R) \to H^1(R,\bmu)$$
resulting from (\ref{isogeny}). Let $g \in \bG(R)$ and consider its image
$[g] \in H^1(R, \bmu).$ Then $g$ is in the image of $\wti{\bG}(R)$ if and
only if $[g] = 1.$ It is clear that the image of $[g]$ in $H^1(\wti{R},
\bmu)$ is trivial. The lemma follows.
\end{proof}

Let $\wti{R}$ be as in the previous lemma, and let $\wti L = \g \ot \wti{R}$.
By Lemma~\ref{embed-gen}(a)  we have a Lie algebra $\wti E = \wti L \oplus C
\oplus D$ containing $E= L \oplus C \oplus D$ as a subalgebra.

Let $g\in\G(R) \subset \bG(\wti{R})$. To avoid any possible confusion when
$g$ is viewed as an element of $\bG(\wti{R})$  we denote it by $\wti{g}$. By
{\it{Step 1}} there is a lifting $\tilde f_g\in \Aut_k(\wti E)$ of $\Ad
\wti{g} \in \Aut_{\wti{R}} (\g \ot \wti{R})$. To establish this we used that
 $\tilde f_g (\wti{L} \oplus C) = \wti{L} \oplus C$. But since $g \in \bG(R)$ and $\tilde{f}_g$ lifts $\Ad \wti{g}$ we conclude that $\tilde f_g (L \oplus C) = L \oplus C$. We can thus apply Lemma~\ref{embed-gen}(b) and conclude $\tilde f(E) = E$. Hence $\tilde f|_E$ is the desired lift of $\Ad g\in \Aut_{R}(\g \otimes R)$. This completes the proof of {\it Step 2}.

{\it{Step 3. Lifting  automorphisms from $\Out(\g)$}.}

Let $g$ be a diagram automorphism of $\g$, or more generally any automorphism
of $\g$. We identify $g$ with $g\ot \Id_R$ and note that $g$ is an $R$-linear
automorphism of $\g \ot R$ preserving the $\La$-grading. Hence
Lemma~\ref{lift-fix}(b) shows that $g$ lifts to an automorphism of $E$. This
completes the proof of Theorem~\ref{lifting theorem}.
\end{proof}

\section{Lifting automorphisms in the fgc case}

In this section we will consider an EALA $E$ whose centreless core $L$ is an
fgc Lie torus. If $R =  k[t_1^{\pm 1}, \ldots, t_n^{\pm 1}]$ is the centroid
of $L$ (see the second paragraph of \ref{ssec:msl}), we will show that any
$R$-linear automorphism of $L$ lifts to an automorphism of $E$. Although our
method of proof is inspired by general Galois descent considerations, we will
give a self-contained presentation (with some hints for the expert readers
regarding the Galois formalism) . \sm

Throughout we will use the notation and definitions of \S\ref{sec:subEALA}.
Thus $L=L(\g, \bs)$ is a multiloop Lie torus with $\bs=(\si_1, \ldots,
\si_n)$ consisting of commuting automorphisms $\si_i\in \Aut_k(\g)$ of order
$m_i$. The crucial point here is that the subalgebras $L\subset \g\ot S$ and
$E\subset E_S$ are the fixed point subalgebras under actions of $\Ga =
\ZZ/m_1\ZZ \oplus \cdots \oplus \ZZ/m_n\ZZ$ on $\g \ot S$ and $E_S$
respectively. In this section we will write the group operation of $\Ga$ as
multiplication.

Indeed, let $\ga_i$ be the image of $(0, \ldots, 0,1,0, \ldots , 0)\in
\ZZ^n$ in $\Ga$. Then $\ga_i$ can be viewed as an automorphism of $S$ via
$\gamma_i \cdot z^\la=\zeta_{m_i}^{ \la_i}z^\la$ for $\la \in \La = \Z^n$.
This defines in a natural way an action of $\Ga$ as automorphisms of $S$.
Clearly $R=S^\Ga$. The group $\Ga$ also acts on $\g$ by letting $\ga_i$ act
on $\g$ via $\si_i^{-1}$. The two actions of $\Ga$ combine to the tensor
product action of $\Ga$ on $\g \ot S$. Note that $\Ga$ acts on $\g \ot S$ as
automorphisms. The subalgebra $L\subset \g \ot S$ is the fixed point
subalgebra under this action.\footnote{In fact, $S/R$ is a Galois extension
with Galois group $\Ga$. The action of $\Ga$ on $\g \ot S$ is the twisted
action of $\Ga$ given by the loop cocycle $\eta(\bs)$ mapping $\ga_i\in \Ga$
to $\si_i^{-1} \ot \Id_S \in \Aut_S(\g\ot S)$.}

By construction every $\ga \in \Ga$ acts on $\g \ot S$ by an $R$-linear
automorphism preserving the $\La$-grading of $\g \ot S$. Identifying (with
any risk of confusion) $\ga \in \Ga$ with this automorphism, the inclusion
\eqref{lift-fix2} applied to $E_S = \mathfrak{g} \ot S \oplus C \oplus D$
says that $\ga$ extends to an automorphism $f_{\ga} \in \Aut_k(E_S)$ given by
\eqref{lift-fix1}. Moreover, the group homomorphism $\ga\mapsto f_\ga$
defines an action of $\Ga$ on $E_S$ by automorphisms. By construction, $E$ is
the fixed point subalgebra of $E_S$ under this action. To summarize,
\[ L=(\g \ot S)^\Ga \quad \hbox{and} \quad E=(E_S)^\Ga.\]

The action of $\Ga$ on $\g \ot S$ gives rise to an action of $\Ga$  on the
automorphism group $\Aut_k(\g \ot S)$ by conjugation: $\ga \cdot g = \ga
\circ g \circ \ga^{-1}$ for $g\in \Aut_k(\g \ot S)$  and $\ga \in \Gamma.$
Similarly, $\Ga$ acts on $\Aut_k(E_S)$ by conjugation. The first part of the
following theorem shows that these two actions are compatible with the
restriction map $\rescc$ of \eqref{def:rescc}.

\begin{theorem} \label{litwi}
{\rm (a)} The restriction map $\rescc \co \Aut_k(E_S) \to \Aut_k(\g \ot S)$
is  $\Ga$-equivariant. Its kernel is fixed pointwise under the action of
$\Ga$. \sm

{\rm (b)} The canonical map
\[ \Aut_k(E_S)^\Ga \to \Ima (\rescc) ^\Ga \]
induced by $\rescc$ is surjective. \sm

{\rm (c)} Every $R$-linear automorphism $g$ of $L$ lifts to an automorphism
$f_g$ of $E$, i.e., $\rescc({f_g}) = g$.
\end{theorem}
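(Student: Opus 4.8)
The plan is to establish the three parts in order, deriving (c) from (b) and reducing (b) to a cohomological vanishing that is guaranteed by (a).

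For part (a) I would argue by naturality. Since $\rescc$ is the composite of the group homomorphisms $\res_c$ and $\overline{\res}$, it is itself a homomorphism, so for $F\in\Aut_k(E_S)$ and $\ga\in\Ga$ one has $\rescc(f_\ga\circ F\circ f_\ga^{-1})=\rescc(f_\ga)\,\rescc(F)\,\rescc(f_\ga)^{-1}$. The automorphism $f_\ga$ of \eqref{lift-fix1} acts on the $\g\ot S$-component by $\ga$ and trivially on $C\oplus D$, whence $\rescc(f_\ga)=\ga$; this gives $\rescc(\ga\cdot F)=\ga\cdot\rescc(F)$, the desired equivariance. For the kernel, Proposition~\ref{res-ker} describes $\Ker(\rescc)$ as the maps $f(l+c+d)=l+(c+\psi(d))+d$ with $\psi\in\Der_k(D,C)$. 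A direct computation of $f_\ga\circ f\circ f_\ga^{-1}$, using that $f_\ga$ alters only the $L$-component (by $\ga$) while fixing $C$ and $D$, yields $f_\ga\circ f\circ f_\ga^{-1}=f$; hence $\Ga$ acts trivially on $\Ker(\rescc)$.

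For part (b) the tool is the exact sequence of pointed sets in nonabelian cohomology attached to the short exact sequence of $\Ga$-groups
\[ 1 \to \Ker(\rescc) \to \Aut_k(E_S) \to \Ima(\rescc) \to 1 \]
(the surjection being $\rescc$, whose image is $\Ga$-stable by (a), and whose kernel is normal and abelian). It reads
\[ \Aut_k(E_S)^\Ga \to \Ima(\rescc)^\Ga \to H^1(\Ga, \Ker(\rescc)), \]
the second arrow being the connecting map, and surjectivity of the first arrow follows once the target pointed set is trivial. By (a) the $\Ga$-action on $\Ker(\rescc)$ is trivial, so $H^1(\Ga,\Ker(\rescc))=\Hom(\Ga,\Ker(\rescc))$; as $\Ga$ is finite and, by Proposition~\ref{res-ker}, $\Ker(\rescc)\cong\Der_k(D,C)$ is a $k$-vector space (hence torsion-free, since $\Char k=0$), this group vanishes. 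This proves (b).

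For part (c), let $g\in\Aut_R(L)$. First I would extend $g$ $S$-linearly: because $\g\ot S=L\ot_R S$, the map $g_S=g\ot\Id_S$ is an $S$-linear automorphism of $\g\ot S$ restricting to $g$ on $L=(\g\ot S)^\Ga$. Since $g$ is $R$-linear and $L$ generates $\g\ot S$ over $S$, one checks that $\ga\circ g_S\circ\ga^{-1}$ is again $S$-linear and agrees with $g_S$ on $L$, hence equals $g_S$; thus $g_S\in\Aut_k(\g\ot S)^\Ga$. Now $E_S$ is an untwisted EALA whose centroid is the Laurent polynomial ring $S$, so Theorem~\ref{lifting theorem} (with $S$ in place of $R$) gives $g_S\in\Ima(\rescc)$, and therefore $g_S\in\Ima(\rescc)^\Ga$. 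By (b) there is $\tilde f\in\Aut_k(E_S)^\Ga$ with $\rescc(\tilde f)=g_S$. Being $\Ga$-fixed, $\tilde f$ commutes with every $f_\ga$ and hence preserves $E=(E_S)^\Ga$, so $f_g:=\tilde f|_E\in\Aut_k(E)$; and since $\rescc$ restricts an automorphism to the core and projects modulo the centre $C$ to the centreless core $L\subset\g\ot S$, one reads off $\rescc(f_g)=g_S|_L=g$. I expect the main obstacle to be part (b): one must confirm that the reduction to $H^1(\Ga,\Ker(\rescc))$ is legitimate — that the $\Ga$-module structure entering the connecting map is exactly the conjugation action computed in (a) — and that this $H^1$ vanishes, the vanishing resting precisely on the two inputs of triviality of the action from (a) and the characteristic-zero hypothesis making $\Ker(\rescc)$ uniquely divisible.
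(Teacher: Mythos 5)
Your proposal is correct and follows essentially the same route as the paper: the same computation $\rescc(f_\ga)=\ga$ together with the explicit description of $\Ker(\rescc)$ from Proposition~\ref{res-ker} for (a), the same long exact cohomology sequence with the vanishing of $H^1\big(\Ga,\Ker(\rescc)\big)$ for (b), and for (c) the same descent step realizing $g\in\Aut_R(L)$ as the $\Ga$-fixed $S$-linear automorphism $g\ot\Id_S$ of $\g\ot S\simeq L\ot_R S$, followed by Theorem~\ref{lifting theorem} over $S$ and part (b). The only cosmetic difference is that you derive the vanishing of $H^1$ from $\Hom\big(\Ga,\Der_k(D,C)\big)=0$ using the triviality of the $\Ga$-action established in (a), whereas the paper cites torsion-freeness of the kernel and finiteness of $\Ga$; both arguments rest on the same inputs.
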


\begin{proof} (a) Let $\ga\in \Ga$ and view $\ga$  as an automorphism of $\g \ot S$.
By construction $\rescc(f_\ga) = \ga$. Since $\rescc$ is a group
homomorphism, for any $f\in \Aut_k(E_S)$ we get $\rescc(\ga \cdot f) =
\rescc(f_\ga \circ f \circ f_\ga^{-1}) = \ga \circ \rescc(f) \circ \ga^{-1} =
\ga \cdot \rescc(f).$  We have determined the kernel of $\rescc$ in
Proposition~\ref{res-ker}. The description in loc.\ cit.\ together with the
definition of the lift $f_\ga$ in \eqref{lift-fix1} implies the last
statement of (a).

  (b) By \cite[I\S5.5, Prop.~38]{serre}, the exact sequence of $\Gamma$-modules $1 \to \Ker(\rescc) \to \Aut_k(E_S) \to \Ima(\rescc)\to 1$ gives rise to the long exact cohomology sequence
  \[ 1 \to \Ker(\rescc) \to \Aut_k(E_S)^\Ga \to \Image(\rescc)^\Ga \to H^1\big( \Ga, \Ker(\rescc)\big) \to \cdots
\]
 of pointed sets. Since $\Ker(\rescc)$ is a torsion-free abelian group and $\Ga$ is finite, we have $H^1\big( \Ga, \Ker(\rescc)\big)= 1.$ Now (b) follows.

(c) Every automorphism $g\in \Aut_k(\g \ot S)^\Ga$ leaves $(\g \ot S)^\Ga  =
L$ invariant and in this way gives rise  to an automorphism $\rho_L(g) \in
\Aut_k(L)$. Similarly we have a group homomorphism $\rho_E \co
\Aut_k(E_S)^\Ga \to \Aut_k(E)$. Since $\rescc \co  \Aut_k(E_S) \to \Aut_k(\g
\ot S)$ is  $\Ga$-equivariant, it preserves the $\Ga$-fixed points. We thus
get the following commutative diagram where $\overline{\res}_{c,E} \co
\Aut_k(E) \to \Aut_k(L)$ is the map \eqref{def:rescc}:
\begin{equation} \label{litwi1} \vcenter{
\xymatrix{
   \Aut_k(E_S)^\Ga \ar[r]^{\rescc} \ar[d]_{\rho_E} & \Aut_k(\g \ot S)^\Ga \ar[d]^{\rho_L} \\
    \Aut_k(E) \ar[r]^{\overline{\res}_{c,E}} & \Aut_k(L)
}}\end{equation}
We will prove (c) by restricting the diagram \eqref{litwi1} to subgroups.
Observe that $\rho_L$ maps $\Aut_S(\g \ot S)^\Ga$ to $\Aut_R(L)$.  In fact,
we claim
\[ \rho_L \co \Aut_S(\g \ot S)^\Ga \to \Aut_R(L) \quad \hbox{is an isomorphism.}
\]
This can be proven as a particular case of a general Galois descent result of
affine group schemes. That said, due to the concrete nature of the algebras
involved it is easy to give a direct proof (which we now do).  The Lie
algebra $L$ is an $S/R$-form of $\g \ot S$. Indeed, the $S$-linear Lie
algebra homomorphism
\[ \theta \co L \ot_R S \to \g \ot_k S, \quad \textstyle  \sum_i x_i \ot s_i  \ot s \mapsto \sum_i x_i \ot s_i s\]
where $\sum_i x_i \ot s_i \in L$, $s\in S$, is an isomorphism. This can be
checked directly  \cite[Lem.~5.7]{abp2.5}, or derived from the fact that $L$
is given by the Galois descent described in the last footnote. It follows
that $L \subset \g\ot S$ is a spanning set of the $S$-module $\g \ot S$,
which implies that $\rho_L$ is injective. For the proof of surjectivity, we
associate to $g\in \Aut_R(L)$ the automorphisms $g \ot \Id_S\in \Aut_S(L \ot
S)$ and $\tilde g = \theta \circ (g \ot \Id_S) \circ \theta^{-1} \in
\Aut_S(\g \ot S)$. We contend that $\tilde g \in \Aut_S(\g\ot S)^\Ga$, i.e.,
$\ga \circ \tilde g \circ \ga^{-1}= \tilde g$ holds for all $\ga \in \Ga$.
Since both sides are $S$-linear, it suffices to prove this equality by
applying both sides to $l\in L$. Since  $\theta(l\ot 1) = l$ we get $(\theta
\circ (g \ot \Id) \circ \theta^{-1})(l) = \big( \theta \circ (g \ot
\Id)\big)(l\ot 1) = \theta^{-1}(g(l) \ot 1) = g(l)$ and since $\ga$ fixes $L
\subset \g \ot S$ pointwise the invariance of $\tilde g $ follows. It is
immediate that $\rho_L(\tilde g) = g$. \sm

By Theorem~\ref{lifting theorem}, every $S$-linear automorphism of $\g \ot S$
lifts to an automorphism of $E_S$, in other words $\Aut_S(\g \ot S) \subset
\Image(\rescc)$. Using (b) this implies that the canonical map $\rescc^{-1}
\big( \Aut_S(\g \ot S)^\Ga\big) \to \Aut_S(\g \ot S)^\Ga$ is surjective. By
restricting the diagram \eqref{litwi1} we now get the commutative diagram
\[ \xymatrix{
    \rescc^{-1} \big( \Aut_S(\g \ot S)^\Ga\big) \ar@{->>}[r] \ar[d]_{\rho_E}
      &  \Aut_S(\g \ot S)^\Ga  \ar[d]_\simeq^{\rho_L}\\
    \overline{\res}_{c,E}^{-1}\big( \Aut_R(L)\big) \ar[r] &\Aut_R(L)
}\]
which implies that the bottom horizontal map is surjective and thus finishes
the proof.
\end{proof}

\section{The conjugacy theorem}\label{sec:conju} 

In this section we will prove the main result of our paper: Theorem
\ref{main} asserting the conjugacy of Cartan subalgebras of a Lie algebra $E$
which give rise to fgc EALA structures on a Lie algebra $E$
(Theorem~\ref{main-res}). Assume therefore that $H$ and $H'$ are subalgebras
of $E$ such that $(E,H)$ and $(E,H')$ are fgc EALAs.\footnote{We have seen
that the core, in particular the fgc assumption,  is independent of the
chosen invariant bilinear form.}  The strategy of our proof is as follows:

(a) Show that the canonical images $H_{cc}$ and $H'_{cc}$ of $H$ and $H'$
respectively in the centreless core $E_{cc}$ are conjugate by an automorphism
of $E_{cc}$ that can be lifted to $E.$

This allows us to assume $H_{cc} = H'_{cc}$. Then we prove that

(b) Two Cartan subalgebras $H$ and $H'$ of $E$ with $H_{cc} = H'_{cc}$ are
conjugate in $\Aut_k(E)$.

It turns out that part (b) can be proven for all EALAs, not only for fgc
EALAs. In view of later applications we therefore start with part (b), which
is the theorem below.

\begin{theorem}
  \label{main-non-fgc}
  Let $(E,H)$ and $(E,H')$ be two EALA structures on the Lie algebra $E$. We put $H_c = H \cap E_c$, $H_{cc} = \overline{H_c} \subset E_{cc}$ and use $'$ to denote the analogous data for $(E,H')$ keeping in mind that $E_c = E'_c$ by Corollary~{\rm \ref{cores are the same}}. Assume $H_{cc} = H'_{cc}$. Then
 \sm

 {\rm (a)}  $H_c = H'_c.$

 {\rm (b)} There exists $f\in \Ker(\rescc)\subset \Aut_k(E)$ such that $f(H) = H'$.
\end{theorem}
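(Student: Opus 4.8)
The plan is to work in the coordinates supplied by the structure $(E,H)$, writing $E = L\oplus C\oplus D$ with centreless core $L=E_{cc}$, centre $C=Z(E_c)$ of the core, $E_c=L\oplus C$, and $H=\frh\oplus C^0\oplus D^0$, where $\frh=H_{cc}=L_0^0$ and $D^0=\euD\cap D$ induces the $\La$-grading of $L$ via $\ad_E$. By Corollary~\ref{cores are the same} the cores coincide, $E_c=E'_c$, so by Proposition~\ref{res-ker} every element of $\Ker(\rescc)$ fixes $E_c$ pointwise. Hence if $f\in\Ker(\rescc)$ satisfies $f(H)=H'$, then $H_c=f(H_c)=H'_c$; this shows that (a) is forced and should be proved first. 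Throughout I will use that $\frh=H_{cc}=H'_{cc}$, so that $\overline{H'_c}=\frh$, and that $C^0\subset Z(E)$ (a consequence of $[\SCDer(L)^\xi,\SCDer(L)^{-\xi}]=0$).

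For (a) I would first prove $H'\cap C=C^0=H\cap C$. One inclusion is free, since $C^0\subset Z(E)$ lies in the self-centralizing subalgebra $H'$. Conversely, for $c\in H'\cap C$ the operator $\ad_E c$ annihilates $L\oplus C$ and sends $d\mapsto -d\cdot c$, so it is square-zero; as $H'$ is toral, $\ad_E c$ is semisimple, hence zero, giving $d_0\cdot c=0$ for all $d_0\in D^0$, and the injectivity of $\ev\co\La\to (D^0)^*$ then forces $c\in C^0$. Using $\overline{H'_c}=\frh$ and $H'_c\cap C=C^0$, I write a general element of $H'_c$ as $(h,\mu(h))$ with $h\in\frh$ and $\mu(h)\in C$ determined modulo $C^0$; the same square-zero computation applied to $\ad_E(h,\mu(h),0)$ on the $D$-slot yields $d_0\cdot\mu(h)=0$, whence $\mu(h)\in C^0$. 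Thus $H'_c=\frh\oplus C^0=H_c$, which is (a).

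For (b) I may now assume $H_c=H'_c$. Since $H'$ is abelian and contains $\frh$, a bracket computation gives $H'\subset C_E(\frh)=L_0\oplus C\oplus D$ with $L_0=C_L(\frh)$. The crucial point is to show that each $h'=(\ell,c,d)\in H'$ is homogeneous of degree $0$ for the grading induced by $\euD=\ad_E(D^0)$, i.e.\ $\ell\in\frh$ and $d\in D^0$; equivalently, the derivation $\delta_{h'}=\ad_L\ell+d$ of $L$ is $\euD$-homogeneous of degree $0$. Granting this, $p_D(H')\subset D^0$, and comparison of dimensions with $H$ yields $p_D(H')=D^0$. I can then write $H'=\frh\oplus C^0\oplus\{(0,\varphi_C(d_0),d_0):d_0\in D^0\}$ for a linear $\varphi_C\co D^0\to C$ well defined modulo $C^0$. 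The relation $[H',H']=0$, together with $\si_D(\frh,\frh)=0$ and $\ta(D^0,D)=0$, reduces to $d_1\cdot\varphi_C(d_2)=d_2\cdot\varphi_C(d_1)$; since $d_0\in D^0$ acts on $C^\xi$ as a scalar multiple of $\ev_\xi(d_0)$, this forces each component ($\xi\ne0$) to have the form $\varphi_C^\xi=\ev_\xi(\cdot)\,v_\xi$ for some $v_\xi\in C^\xi$, the $C^0$-component being irrelevant. Setting $\psi=\sum_{\xi\ne0}\psi_{v_\xi}$, where $\psi_v\co d\mapsto d\cdot v$ is the inner derivation attached to $v\in C$, gives $\psi\in\Der_k(D,C)$ with $\psi|_{D^0}\equiv\varphi_C\pmod{C^0}$, and the associated $f\in\Ker(\rescc)$ then satisfies $f(H)=H'$.

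The main obstacle is the homogeneity step in (b). Unlike in (a), it cannot be read off a single square-zero block: a sum $\sum_\nu T_\nu$ of operators of mixed degrees can be semisimple, as $e+f$ in $\lsl_2$ shows. I expect to resolve it by analysing $\ad_E h'$ through its $\euD$-weight decomposition and proving that, for $\ad_E h'$ semisimple, the components of nonzero degree must vanish, using the fine structure of the centreless Lie torus $L$ (in particular that the relevant positive-degree adjoint operators have vanishing semisimple part). This is the technical heart of the argument; by contrast, the construction of $\psi$, which might look like the difficulty, is disposed of cleanly by the inner derivations $d\mapsto d\cdot v_\xi$.
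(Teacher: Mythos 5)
Your part (a) and the endgame of part (b) are essentially sound, but there is a genuine gap exactly where you flag it: the homogeneity step, i.e.\ the claim that every $h'=\ell+c+d\in H'$ has $\ell\in\frh$ and $d\in D^0$. You correctly observe that this cannot be read off degree by degree from semisimplicity of $\ad_E h'$ (your $e+f$ example), but the replacement you propose --- that ``the relevant positive-degree adjoint operators have vanishing semisimple part'' --- is not an argument, and it is doubtful it can be made into one by inspecting a single element: already for $L=\g\ot R$ the operator $\ad(h\ot t^\la)$ with $\la\ne 0$ has no nonzero eigenvalues on any root space, so no useful Jordan decomposition is available in this infinite-dimensional setting. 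The paper closes the gap with a structural input your sketch lacks: since $H_{cc}=H'_{cc}$, the identity map of $L$ is an \emph{isotopy} between the two Lie torus structures (the one graded by $D^0$ and the one graded by $D'^0$), and by Allison's isotopy theorem \cite[Th.~7.2]{Al} every homogeneous space $L_\al^\la$ of the first structure coincides with one of the second. Hence each $d'^0\in D'^0$ acts on each $L_\al^\la$ by a scalar; feeding this into the product formula gives $\si(\ell,l^\la)=0$ for all $\la$, so nondegeneracy of the form yields $\tilde d(\ell)=0$ for all $\tilde d\in D$, forcing $\ell\in L_0^0=\frh$, and then torsion-freeness of the centroid action forces $d\in D^0$ (Lemma~\ref{almost-n}). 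Without this, or an equivalent input, your proof does not close.

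Two smaller remarks. In (a), $\ad_E(h+\mu(h))$ is not square-zero, so you cannot literally ``apply the same computation to the $D$-slot''; you must first subtract the commuting diagonalizable operator $\ad_E h$ to isolate $\ad_E\mu(h)$, which is then diagonalizable and square-zero, hence zero --- this is precisely the paper's argument with $c=x-y$. On the positive side, your final step is a genuine simplification of the paper's Lemmas~\ref{wei} and~\ref{deri}: rather than extending $\xi\co D^0\to C^{\ne 0}$ to a derivation of $D$ by analysing the weight spaces of $V$ on $C\oplus D$, you note that the symmetry $d_1\cdot\varphi_C(d_2)=d_2\cdot\varphi_C(d_1)$ forces $\varphi_C^\xi=\ev_\xi(\cdot)\,v_\xi$ for $\xi\ne 0$, and that the inner derivation $d\mapsto -d\cdot v$ with $v=\sum_{\xi\ne 0}v_\xi$ (a finite sum, since $D^0$ is finite-dimensional) already restricts to $\varphi_C$ modulo $C^0$. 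That is correct and shorter, once the homogeneity step is in place.
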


\begin{proof} (a)
Let $x\in H_c$. Since $H_{cc}=H'_{cc}$ there exists $y\in H'_c$ such that
$\overline{x}=\overline{y}\in E_{cc}$. Then $c=x-y\in C =Z(E_c)$, so that the
elements $x$ and $y$ commute. Being elements of $H_c$ and $H'_c$, both $\ad_E
x$ and $\ad_E y$  are $k$-diagonalizable endomorphisms of $E$. It follows
that $\ad_E c$ is also $k$-diagonalizable.

We now note that it follows from $[C,D]_E\subset C$ and $[C,E_c]_E=0$  that
any eigenvector of $\ad_E c$ with a nonzero $D$-component necessarily
commutes with $c$. Therefore $c\in Z(E)\subset H'_c.$ Thus $x=y+c\in H'_c$,
and therefore $H_c\subset H'_c$. Thus $H'_c = H_c$ by symmetry finishing the
proof of (a). \sm

Since the proof of (b) is much more involved, we have divided it into a
series of lemmas (Lemma~\ref{almost-n} -- Lemma~\ref{deri}). The reader will
find the proof of (b) after the proof of Lemma~\ref{deri}.

Because $H'_c=H_c=H_{cc}\oplus C^0$ we have decompositions $H=H_{cc}\oplus
C^0\oplus D^0$ and $H'=H_{cc}\oplus C^0\oplus D'^0$ for a (non-unique)
subspace $D'^0 \subset E$.  Our immediate goal is restrict the possibilities
for $D'^0$.

\begin{lemma}
  \label{almost-n}
$D'^0 \subset H_{cc} \oplus C \oplus D^0$.
\end{lemma}

\begin{proof}
  Let $d'^0 \in D'^0$, say $d'^0 = l' + c + d$ with obvious notation. Since $[d'^0, h]_E = 0$ for $h\in H'_{cc} = H_{cc}$ we get
 $ 0 = [l' + c + d, h]_E = \big( [l',h]_L + d(h) \big) + \si(l',h) = [l', h]_L $
  because $\CDer(L)^0(H_{cc}) = 0$ and therefore $d(h) = \si(l',h) = 0$. Thus $l'\in C_L(H_{cc}) = L_0$.

We have two Lie tori structures on $L$, the second one is denoted by $L'$;
the $L'$-structure has a $\Lambda '$-grading $L'=\oplus _{\lambda'\in
\Lambda'}L^{\lambda'}$, induced by $D'^0.$ Similarly, $L=\oplus_{\lambda\in
\Lambda} L^{\lambda}$ is induced by $D^0$.  Since $H_{cc} = H'_{cc}$  the
identity map of $L$ is an isotopy (see \cite[Theorem 7.2]{Al}). Thus
\[
L^{\lambda}_{\alpha}=L'^{\, \phi_{\Lambda}(\lambda)+\phi_s(\alpha)}_{\phi_r(\alpha)}.
\]
The nature of the maps $\phi$ is given in {\it loc.\ cit.} All that is
relevant to us is the fact that for all $\la, \al$ there exist appropriate
$\al', \la'$ such that $L_{\alpha}^{\lambda} = {L'}_{\alpha'}^{\lambda'}$.
Since $D'^0$ induces the $\La$-grading of $L$, we have for $l ^\la \in L
^\la$ that
\[ k l ^\la \ni [d'^0, l ^\la]_E = [l' + c + d, l ^\la]_E =
    \big( [l',l ^\la]_L + d(l ^\la) \big) + \si(l',l ^\la).\]
Thus $0 = \si(l',l ^\la)(\tilde d) = (\tilde d (l') \mid l ^\la)$ for all
$\tilde d \in D$ and all $l ^\la$. By the nondegeneracy of $\inpr$ on $L$ we
get $\tilde d(l') = 0$ for all $\tilde d \in D$. As $D^0 \subset D$ induces
the $\La$-grading of $L$ this forces $l'\in L^0$, whence $l'\in L^0_0 =
H_{cc}$. But then $[l', l ^\la]_L \in k l ^\la$ so that the equation above
implies $d(l ^\la) \in k l ^\la$. We can  write $d=\sum_{\ga \in \Ga} r^\ga
d^{0\ga}$ for some $r^\ga \in R^\ga$ and $d^{0\ga} \in D^0$. Since $r^\ga
d^{0\ga} (l ^\la) \in L ^{\la + \ga}$ we get $r^\ga d^{0\ga}(l ^\la)= 0$ for
all $\ga \ne 0$. But $R$ acts without torsion on $L$, so $r^\ga = 0$ or
$d^{0\ga} = 0$ for $\ga \ne 0$, and $d\in D^0$ follows.
\end{proof}
We keep the above notation and set $C^{\neq \mu } =\oplus _{\la\neq
\mu}C^{\la}$.
\begin{lemma} \label{lem-psi}
There exists a subspace $V\subset H'$ such that
\begin{enumerate}[\rm (a)]
\item $H'=H_c\oplus V$, $V\subset C^{\neq 0}\oplus D^0$, and
\item $V$ is the graph of some linear map $\xi\in \Hom(D^0, C^{\neq 0})$.
\end{enumerate}
\end{lemma}

\begin{proof} (a)
By the already proven part (a) of Theorem~\ref{main-non-fgc} we have
$H'=H'_c\oplus D'^0=H_c\oplus D'^0$ and by Lemma~\ref{almost-n}, $D'^0\subset
H_{cc}\oplus C\oplus D^0$. We decompose
\begin{equation} \label{nona1}
H_{cc}\oplus C\oplus D^0=(H_{cc}\oplus C^0)\oplus (C^{\neq 0}\oplus D^0).
\end{equation}
Let $p:H_{cc}\oplus C\oplus D^0\to C^{\neq 0}\oplus D^0$ be the projection
with kernel $H_{cc} \oplus C^0$ and put $V=p(D'^0)$. Since $D'^0\cap
(H_{cc}\oplus C^0)\subset D'^0 \cap E_c=0$, we see that $p|_{D'^0} \co D'^0
\to V $ is a vector space isomorphism. Note also that $V \subset H'$. Indeed,
every $v\in V$ is of the form $v=p(d'^0)$ for some $d'^0\in D'^0$, whence
$d'^0=h+c^0+v$ for unique $c^0 \in C^0, h\in H_{cc}$. Since $h,c^0\in H'$ it
follows that $v=d'^0-c^0-h \in H'$. Moreover the inclusion $V\subset C^{\neq
0}\oplus D^0$ implies $V\cap (H_{cc}\oplus C^0)=0$ by \eqref{nona1}. By a
dimension argument we now get $H'=H_c\oplus V$.

(b) The multiplication rule \eqref{derbracket} together with the fact that
the $\La$-grading of $D$ is induced by $D^0$ shows $[D,D] = \bigoplus_{\la
\ne 0} D^\la$. Hence, using \eqref{n:gencons3} and the perfectness of $E_c$,
we have $E=[E,E] \oplus D^0$ and then $D^0 \simeq E/[E,E] \simeq D'^0$. In
particular, $\dim (V) = \dim (D'^0) = \dim (D^0)$. Note also that $V \cap
C^{\neq 0} = \{0\}$. Indeed, let $v=p(d'^0)$ for some $d'^0 = h_{cc} + c^0 +
c^{\neq 0} + d^0$ (obvious notation). Then $p(d'^0) = c^{\ne 0} + d^0\in
C^{\neq 0}$ forces $d^0=0$, whence $d'^0 \in E_c$. But then $d'^0 = 0$
because $E_c \cap D'^0 = \{0\}$. Therefore $v=p(d'^0) = 0$. It now follows
that the projection $p_1 \co C^{\neq 0} \oplus D^0 \to D^0$ with kernel
$C^{\neq 0}$ is injective on $V$. By reasons of dimensions $p_1 |_V \co V \to
D^0$ is a vector space isomorphism. Its inverse followed by the projection
onto $C^{\neq 0}$ is the map $\xi$ whose graph is $V$.
\end{proof}

\begin{lemma}
  \label{wei}
{\rm (a)} The weights of the toral subalgebra $V$ of $C \oplus D$ are the
linear forms $\ev'_\mu \in V^*$ for $\mu \in \supp (C) = \supp (D) \subset
\La$, defined by
\[ \ev'_\mu(\xi(d^0) +  d^0) = \ev_\mu (d^0)\]
for $d^0 \in D^0$ and $\xi$ as in Lemma~{\rm \ref{lem-psi}}.

{\rm (b)} There exists a unique linear map $\psi_\mu\co D^\mu \to C^{\neq
\mu}$ such that the $\ev'_\mu$-weight space of $C\oplus D$ is given by
\begin{equation} \label{wei0}
(C\oplus D)_{\ev'_\mu} = C^\mu \oplus \{ \psi_\mu (d^\mu) + d^\mu : d^\mu \in D^\mu\}.\end{equation}

{\rm (c)} We have $\psi_0 = \xi$. \end{lemma}

\begin{proof}
(a) Since $V \subset H'\cap (C \oplus D)$ the space $V$ is indeed a toral
subalgebra of $C\oplus D$. We write the elements of $V$ in the form $\xi(d^0)
+ d^0$. Since $\ta(D^0, D) = 0$ we then have the following multiplication
rule for the action of $V$ on $C \oplus D$:
\begin{equation}
  \label{wei1} [\xi(d^0)+d^0, \, c+d]_E= (d^0\cdot c-d\cdot \xi(d^0)) + [d^0,d]_D.
\end{equation}
It follows that $C^\mu$ is contained in $(C \oplus D)_{\ev'_\mu}$. Moreover,
for any eigenvector  $c+d$ of $\ad V$ with $d\ne 0$ the $D$-component $d$ is
an eigenvector of the toral subalgebra $D^0$ of $D$, whence $d\in D^\mu$ for
some $\mu\in \supp D$ and thus $c+d \in (C\oplus D)_{\ev'_\mu}$.

(b) By \eqref{wei1} we have $c+d \in (C \oplus D)_{\ev'_\mu}$ with $d\ne 0$
if and only if $d=d^\mu$ and
\begin{align*}
  \ev_\mu(d^0)\, (c + d^\mu) &= \ev'_\mu( \xi(d^0) + d^0)\, (c + d^\mu)
       = [\xi(d^0) + d^0, \, c + d^\mu]_E \\
   &=  \big( d^0 \cdot c - d^\mu \cdot \xi(d^0)\big) + \ev_\mu(d^0) d^\mu
\end{align*}
holds for all $d^0 \in D^0$.  Thus $\ev_\mu(d^0)c=d^0\cdot c - d^\mu\cdot
\xi(d^0)$. Writing $c$ in the form $c=\sum_{\lambda\in \Lambda} c^{\lambda}$
with $c^\la \in C^\la$ and comparing homogeneous components we get
$\ev_\mu(d^0)c^\la= \ev_\la( d^0)c^\la-(d^{\mu}\cdot \xi(d^0))^\la$ for every
$\lambda\in \Lambda$, whence
\begin{equation}\label{equation}
(d^{\mu}\cdot \xi(d^0))^{\lambda}=\ev_{\lambda-\mu}(d^0)c^{\lambda}.
\end{equation}
Since $C^\mu \subset (C \oplus D)_{\ev'_\mu}$ we can assume $c^\mu = 0$. But
for $\lambda\neq \mu$ there exists $d^0\in D^0$ such that
$\ev_{\lambda-\mu}(d^0)\neq 0$ and then \eqref{equation} uniquely determines
$c^\la$. Thus $c + d = \psi_\mu(d^\mu) + d^\mu$ for a unique $\psi_\mu(d^\mu)
\in C^{\neq \mu}$. That $\psi_\mu$ is linear now follows from uniqueness.

(c) We have $C^0 \oplus V \subset (C \oplus D)_{\ev'_0}$ by definition of the
$\ev'_0$-weight space. Moreover, by \eqref{wei0} and Lemma~\ref{lem-psi}(b),
$\dim (C^0 \oplus V) = 2\dim D^0 = \dim (C \oplus D)_{\ev'_0}$, whence $C^0
\oplus V = (C \oplus D)_{\ev'_0}$. But then $\psi_0 = \xi $ follows from
Lemma~\ref{lem-psi}(b) and the uniqueness of $\psi_0$.
\end{proof}

\begin{lemma}\label{deri}
Let $\psi \co D \to C$ be the unique linear map satisfying $\psi|_{D^\mu} =
\psi_\mu$ with $\psi_\mu$ as in Lemma~{\rm \ref{wei}}. Then $\psi$ is a
derivation, i.e., for $d^\la \in D^\la$ and $d^\mu \in D^\mu$ we have
\begin{equation} \label{wei4}
 \psi_{\la + \mu}([d^\la, d^\mu]_D)
          = d^\la \cdot \psi_\mu(d^\mu) - d^\mu \cdot \psi_\la (d^\la).\end{equation}
\end{lemma}

\begin{proof} The multiplication in $C \oplus D$ yields
\[
 [ \psi_\la (d^\la) + d^\la, \, \psi_\mu(d^\mu) + d^\mu]_{C \oplus D}
     = \big( \tau(d^\la, d^\mu) + d^\la \cdot \psi_\mu(d^\mu) - d^\mu \cdot \psi_\la (d^\la) \big) +  [d^\la, d^\mu]_D. \]
Since $\ta(d^\la, d^\mu) \in C^{\la + \mu}$ the $C^{\neq (\la +
\mu)}$-component of this element is
\begin{equation} \label{wei5}
  [ \psi_\la (d^\la) + d^\la, \, \psi_\mu(d^\mu) + d^\mu]_{C^{\neq (\la + \mu)}}
     =  d^\la \cdot \psi_\mu(d^\mu) - d^\mu \cdot \psi_\la (d^\la).
  \end{equation}
But because $\psi_\la(d^\la) + d^\la \in (C\oplus D)_{\ev'_\la}$ and
$\psi_\mu(d^\mu) + d^\mu \in (C\oplus D)_{\ev'_\mu}$ we also know
\[ [\psi_\la (d^\la) + d^\la, \, \psi_\mu(d^\mu) + d^\mu]_{C\oplus D}\in (C\oplus D)_{\ev'_{\la + \mu}}.\]
By \eqref{wei0} there are therefore two cases to be considered, $[d^\la,
d^\mu]_D \ne 0$ and $[d^\la, d^\mu]_D = 0$.

{\em Case $[d^{\lambda},d^{\mu}]_D\neq 0$}: In this case
\[[ \psi_\la (d^\la) + d^\la, \, \psi_\mu(d^\mu) + d^\mu]_{C \oplus D} = \psi_{\la + \mu}([d^\la, d^\mu]_D) + [d^\la, d^\mu]_D\]
with $C^{\neq(\la + \mu)}$-component equal to $\psi_{\la + \mu}([d^\la,
d^\mu]_D)$ so that \eqref{wei4} follows by comparison with \eqref{wei5}.

{\em Case $[d^{\lambda},d^{\mu}]_D =0$}: In this case \eqref{wei4} becomes
\[  d^\la \cdot \psi_\mu (d^\mu) = d^\mu \cdot \psi_\la(d^\la)\]
with both sides being contained in $C^{\ne (\la + \mu)}$. We prove this
equality by comparing the $C^{\rho}$-component of both sides for some $\rho
\ne \la + \mu$. By \eqref{equation}
\begin{align*}
\ev_{(\rho-\la)-\mu}(d^0)\,\psi(d^{\mu})^{ \rho-\la}
    &= d^{\mu}\cdot (\xi (d^0)^{(\rho-\la)-\mu}) \quad \mbox{and} \\
\ev_{(\rho-\mu)-\lambda}(d^0)\, \psi(d^{\lambda})^{\rho-\mu} &=
     d^{\lambda}\cdot (\xi (d^0)^{(\rho-\mu)-\lambda}).
\end{align*}
Hence, choosing $ d^0\in D^0$ such that $\ev_{\rho-\lambda-\mu}(d^0)\neq 0$,
setting $e=\ev_{\rho-\lambda-\mu}(d^0)^{-1}$ and using $[d^\la, d^\mu]_D = 0$
we have
\[ \begin{array}{lll}
d^{\lambda}\cdot \psi(d^{\mu})^{\rho-\lambda}
 & = &  d^{\lambda}\cdot (e\, d^{\mu}\cdot \xi (d^0)^{\rho-\lambda-\mu})
  = e\,d^{\mu}\cdot(d^{\lambda}\cdot \xi (d^0)^{\rho-\lambda-\mu}) \\
 & = &
 e\,d^{\mu}\cdot(\ev_{\rho-\lambda-\mu}(d^0)
 \psi(d^{\lambda})^{\rho-\mu})  = d^{\mu}\cdot \psi (d^{\lambda})^{\rho-\lambda}.
\end{array}\]
This finishes the proof of \eqref{wei4}. \end{proof}

{\em End of the proof of Theorem}~\ref{main-non-fgc}(b): It follows from
Lemma~\ref{deri} that the map $f$ defined by \eqref{res-ker1} lies in
$\Ker(\rescc)$. This map fixes $L \oplus C$ pointwise and maps $D^0$ to
$(\psi + \Id)(D^0) = V$. Thus $f(H) = H'$ in view of Lemma~\ref{lem-psi}.
\end{proof}

We can now prove the main result of this paper: Conjugacy of Cartan
subalgebras of a Lie algebra $E$ which give rise to fgc EALA structures on
$E$.

\begin{theorem}\label{main-res} Let $(E,H)$ be an EALA whose centreless core $E_{cc}$ is fgc, and let $(E,H')$ be a second EALA structure. Then there exists an automorphism $f$ of the {\em Lie algebra\/} $E$ such that $f(H)=H'$.
\end{theorem}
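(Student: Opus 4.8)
The plan is to execute the two-step strategy announced at the start of the section. By Corollary~\ref{cores are the same} the two EALA structures have the same core, so their centreless cores coincide: $L := E_{cc} = E'_{cc}$ is an fgc centreless Lie torus with centroid $R$. Writing $H_{cc} = \overline{H_c}$ and $H'_{cc} = \overline{H'_c}$ for the images of $H_c = H\cap E_c$ and $H'_c = H'\cap E_c$ in $L$, the substantial technical work --- conjugating two Cartan subalgebras that already have the \emph{same} image in $L$ --- has been completed in full generality in Theorem~\ref{main-non-fgc}(b). What remains is therefore only the reduction to the case $H_{cc}=H'_{cc}$.

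To carry out this reduction I would first note that $H_{cc}$ and $H'_{cc}$ are each the standard Cartan subalgebra $L_0^0$ of $L$ relative to the Lie torus grading induced by the respective structure, hence both are MADs of the Lie algebra $L$. Since $L$ is fgc, the conjugacy theorem of \cite{CGP} applies and yields an $R$-linear automorphism $g\in \Aut_R(L)$ with $g(H_{cc})=H'_{cc}$. The decisive feature is that $g$ may be taken centroid-linear: this is exactly the hypothesis of Theorem~\ref{litwi}(c), which then lifts $g$ to an automorphism $f_g\in\Aut_k(E)$ satisfying $\rescc(f_g)=g$.

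Replacing $(E,H)$ by $(E,f_g(H))$, which is again an EALA structure on $E$, I would use that $E_c$ is $f_g$-stable (Proposition~\ref{automorphism gives another EALA}) and that $\overline{\phantom{0}}$ intertwines $f_g|_{E_c}$ with $g$ to conclude $\overline{f_g(H)\cap E_c}=g(H_{cc})=H'_{cc}$; thus $(E,f_g(H))$ and $(E,H')$ have equal image in $L$. Theorem~\ref{main-non-fgc}(b) then provides $f'\in\Ker(\rescc)$ with $f'(f_g(H))=H'$, and $f=f'\circ f_g$ is the sought automorphism. The step I expect to require the most care is the invocation of \cite{CGP}: everything hinges on being able to take the conjugating automorphism of $L$ to be $R$-linear rather than merely $k$-linear, since $R$-linearity is precisely what allows conjugacy downstairs in $L$ to be transported upstairs to $E$ through Theorem~\ref{litwi}(c).
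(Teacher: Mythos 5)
Your proposal is correct and follows essentially the same route as the paper: reduce to $H_{cc}=H'_{cc}$ by combining the $R$-linear conjugacy of MADs from \cite{CGP} with the lifting result of Theorem~\ref{litwi}(c), then invoke Theorem~\ref{main-non-fgc}(b). The only cosmetic difference is that you move $H$ onto $H'$ rather than the reverse, and the paper is slightly more explicit in justifying the applicability of \cite{CGP} (identifying $H_{cc}$, $H'_{cc}$ as Borel--Mostow MADs satisfying the hypotheses of the conjugacy theorem there), a point you subsume under ``the conjugacy theorem of \cite{CGP} applies.''
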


\begin{proof} Using the notation of Theorem~\ref{main-non-fgc}, we know that $(E_{cc}, H_{cc}$) and $(E_{cc}, H'_{cc})$ are fgc Lie tori. Both subalgebras $H_{cc}$ and $H'_{cc}$ are MADs of $L=E_{cc}$ (\cite[Cor.~5.5]{Al}). We can now apply  \cite{CGP}: Both $H$ and $H'$ are Borel-Mostow MADs in the sense of \cite[\S13.1]{CGP} and satisfy the conditions of the general Conjugacy Theorem \cite[Thm.~12.1]{CGP}. Hence there exists
$g\in \Aut_{R}(L)$ such that $g(H'_{cc})=H_{cc}$.\footnote{Even though it is
not needed for this work, we remind the reader that $g$ can be chosen in the
image of a natural map $\widetilde{\mathfrak{G}}(R)\to {\rm Aut}_{R}(L)$
where $\widetilde{\mathfrak{G}}$ is a simple simply connected group scheme
over $R$ with Lie algebra $L$.} According to Theorem~\ref{litwi}(c), $g\in
\Aut_{R}(L)\subset \Aut_{k}(L)$ can be lifted to an automorphism, say $f_g$,
of $E$. So replacing the second structure $(E,H')$ by $(E, f(H'))$  we may
assume without loss of generality that $H_{cc}=H'_{cc}$.\footnote{We leave to
the reader to check that $(E, f_g(H'))$ has a natural EALA structure. For
example if $\inpr'$ was the invariant bilinear form of $(E,H')$ then on $(E,
\phi(H')$ we use $(\inpr' \circ(f^{-1}\times f^{-1})$.} An application of
Theorem~\ref{main-non-fgc} now finishes the proof.
\end{proof}

\begin{remarks} (a) We point out that conjugacy does not hold for all maximal $\ad$-diagonaliz\-able subalgebras of an EALA $(E,H)$, see \cite{yaho}.
\sm

(b) In the setting of Theorem~\ref{main-res} let $\Psi$ and $\Psi'$ be the
root systems of the EALA structures $(E,H)$ and $(E,H')$ respectively, cf.\
axiom (EA1) of the Definition~\ref{def:eala}. The dual map of the isomorphism
$f|_H$ is an isomorphism $\Psi' \to \Psi$, namely an isomorphism $H^{\prime
*} \to H^*$ sending $\Psi'$ to $\Psi$ and ${\Psi '}{}\re$ to $\Psi\re$. The
root system $\Psi$ and $\Psi'$ are extended affine root systems and are thus
given in terms of finite irreducible, but possibly non-reduced root systems
$\dot \Psi$ and $\dot \Psi'$ (\cite{AABGP}, or \cite{LN} where $\dot \Psi$
and $\dot \Psi'$ are called quotient root systems).  It follows from
\cite[4.1]{LN} that isomorphic extended affine root systems have isomorphic
quotient root systems. Thus $\dot \Psi' \simeq \dot \Psi$. We thus recover
\cite[Prop.~6.1(i)]{Al} where this was proven by a different method. \sm

(c) We can be more precise about the automorphism $f$ needed for conjugacy in
Theorem~ \ref{main-res}. Namely, let $\res_D: \Aut_k(E)\rightarrow
\Aut_k(E/E_c)\simeq \Aut_k(D)$ be the canonical map. Then the conjugating
automorphism $f$ can be chosen in the normal subgroup
\[ G = \Ker (\res_D)\cap \rescc^{-1}\big( \Aut_R(E_{cc}) \big)\]
of $\Aut_k(E)$. Indeed, the automorphism $f$ of the proof of
Theorem~\ref{main-res} has the form $f=f'\circ f_g$ where $f'\in \Ker
(\rescc)$ and thus $f'\in G$ by Proposition~\ref{res-ker}(b). Moreover, $f_g$
is a certain lift of $g\in \Aut_R(E_{cc})$. That $\res_D(f_g)=1$ follows from
the proof of Theorem~\ref{lifting theorem}, Proposition~\ref{li-elem}(iii)
and Lemma~\ref{lift-fix}.

\end{remarks}


\begin{thebibliography}{ABCDE}


\bibitem[Al]{Al} B.~Allison, {\em Some Isomorphism Invariants for Lie Tori},
    J. Lie Theory, {\bf{22}} (2012), 163-204.

\bibitem[AABGP]{AABGP} B.~Allison, S.~Azam, S.~Berman, Y.~Gao and
    A.~Pianzola, {\em Extended affine  Lie algebras and their root systems},
    Mem. Amer. Math. Soc. \textbf{126} (1997), no. 603, x+122.


\bibitem[ABFP]{abfp2} B.~Allison, S.~Berman, J.~Faulkner and A.~Pianzola,
   \textit{Multiloop realization of extended
    affine {L}ie algebras and {L}ie tori}, Trans. Amer. Math. Soc.
    \textbf{361} (2009), 4807--4842.

\bibitem[ABGP]{abgp} B.~Allison, S.~Berman, Y.~Gao and A.~Pianzola,
    {\it A characterization of
  affine {K}ac-{M}oody {L}ie algebras}, Comm. Math. Phys. {\bf 185} (1997), 671--688.

\bibitem[AF]{AF:isotopy} B.~Allison and J.~Faulkner, {\em Isotopy for
    extended affine {L}ie algebras and {L}ie tori}, in Developments and
    trends in infinite-dimensional {L}ie theory, Progr. Math. \textbf{288}
    (2011), 3--43,
     Birkh\"auser Boston Inc., Boston, MA.

\bibitem[ABP]{abp2.5} B.~Allison, S.~Berman and A.~Pianzola,
   {\em Iterated loop algebras}, Pacific J. Math. \textbf{227} (2006),
    1--41.

\bibitem[B1]{Bbk} N.~Bourbaki, \textit{Groupes et Alg{\`e}bres de Lie}, Ch.
    VI, Hermann, Paris 1968.

\bibitem[B2]{bou:Lie78} N.~Bourbaki, \textit{Groupes et Alg{\`e}bres de
    {L}ie}, Ch. VII--VIII, Hermann, Paris 1975.

\bibitem[BN]{bn} G.~Benkart and E.~Neher, \textit{The centroid of
    extended
    affine and root graded {L}ie algebras}, J. Pure Appl. Algebra
    \textbf{205} (2006), 117--145.

\bibitem[BGK]{bgk} S.~Berman, Y.~Gao and Y.~Krylyuk, \textit{Quantum tori and
    the structure of elliptic quasi-simple {L}ie algebras}, J. Funct. Anal.
    \textbf{135} (1996), 339--389.


\bibitem[BGKN]{bgkn} Berman, S., Gao, Y., Krylyuk, Y., and Neher, E.,
    \textit{The alternative torus and the structure of elliptic quasi-simple
    {L}ie algebras of type A$_2$}, Trans. Amer. Math. Soc. \textbf{347}
    (1995), 4315--4363.



\bibitem[CGP]{CGP} V.~Chernousov, P.~Gille and A.~Pianzola, {\em Conjugacy
    theorems for loop reductive group schemes and Lie algebras},  Bull. Math.
    Sci. \textbf{4} (2014), 281-324.

\bibitem[CGPY]{CGPY}  V.~Chernousov, P.~Gille  A.~Pianzola and U.~Yahorau,
    {\em A cohomological proof of Peterson-Kac's theorem on conjugacy of
    Cartan subalgebras for affine Kac-Moody Lie algebras}, J. Algebra 399
    (2014), 55Ð78.


\bibitem[GP1]{GP1} P.~Gille and A.~Pianzola, {\em Galois cohomology and forms
    of algebras over Laurent polynomial rings}, Math. Ann. \textbf{338} (2)
    (2007) 497--543.


\bibitem[GP2]{GP} P.~Gille and A.~Pianzola, {\em Torsors, Reductive group
    schemes and extended affine Lie algebras},  Mem. Amer. Math. Soc.
    \textbf{226} (2013), no. 1063, x+112.


\bibitem[LN]{LN} O.~Loos and E.~Neher, {\em Reflection systems and partial
    root systems}, Forum Math. \textbf{23} (2011), 349--411.

\bibitem[Ne1]{n:tori} E.~Neher, {\em Lie tori}, C. R. Math. Acad. Sci. Soc.
    R. Can. \textbf{26} (2004), 84-89.

\bibitem[Ne2]{Ne4} E.~Neher, {\em Extended affine Lie algebras}, C. R. Math.
    Acad. Sci. Soc. R. Can. \textbf{26} (2004), 90-96.

\bibitem[Ne3]{n:persp} E.~Neher, \textit{Extended affine {L}ie algebras and
    other generalizations of  affine {L}ie algebras---a survey}, in:
Developments and trends in infinite-dimensional Lie theory, Progr. Math.
\textbf{288} (2011), 53--126, Birkh\"auser Boston Inc., Boston, MA.

\bibitem[Ne4]{n:eala-summ} E.~Neher, \textit{Extended affine Lie Algebras --
    An Introductrion to Their Structure Theory}, in Geometric representation theory and extended affine {L}ie
 algebras, Fields Inst.~Commun.~\textbf{59} (2011), 107--167,
 Amer. Math. Soc., Providence, RI.


\bibitem[NPPS]{NPPS} E.~Neher, A.~Pianzola, D.~Prelat and C.~Sepp, {\em
    Invariant forms of algebras given by faithfully flat descent},
     Commun.~Contemp.~Math.~\textbf{17} (2015), no. 2, 1450009, 37 pp.

\bibitem[NY]{ny} E.~Neher, and Y.~Yoshii, \textit{Derivations and
    invariant   forms of {J}ordan and alternative tori}, Trans. Amer. Math. Soc.
    \textbf{355} (2003), 1079--1108.

\bibitem[PK]{PK} D.H.~Peterson and V.~Kac, {\em Infinite flag varieties and
    conjugacy theorems}, Proc. Natl. Acad. Sci. USA, \textbf{80} (1983),
    1778-1782.

\bibitem[Pi]{P} A.~Pianzola, \emph{Derivations of certain algebras
    defined  by     $\acute{e}$tale descent}. Math. Z. \textbf{264} (2010), 485-495.

\bibitem[SGA3]{SGA3} M.~Demazure and A.~Grothendieck, {\it S\'eminaire de
    G\'eom\'etrie alg\'ebrique de l'I.H.\'E.S., 1963-1964, sch\'emas en
    groupes}, Lecture Notes in Math. \textbf{151-153} (1970), Springer.

\bibitem[Se]{serre} J.-P.~Serre, \emph{Cohomologie Galoisienne}, Lecture
    Notes in Math. \textbf{5} (1994), Springer-Verlag.

\bibitem[T]{maribel} M.~Toc{\'o}n, \textit{The {K}ostrikin radical and the
    invariance of the core of reduced extended affine {L}ie algebras}, Canad.
    Math. Bull. \textbf{51} (2008),
 298--309.

\bibitem[Ya]{yaho} U.~Yoharau, \textit{Counterexample to conjugacy of maximal
    abelian diagonalizable subalgebras in extended affine Lie algebras}, Ark.~Mat.~
    2015, DOI:~10.1007/s11512-015-0225-z

\bibitem[Yo1]{y1} Y.~Yoshii, \emph{Coordinate Algebras of Extended Affine
    {L}ie Algebras of
        Type\/ {\rm A}$_1$}, J. Algebra \textbf{234} (2000), 128--168.


\bibitem[Yo2]{Yo} Y.~Yoshii, {\em Root Systems Extended by an Abelian Group
    and their Lie Algebras}, J. Lie Theory, {\bf{14}} (2004), 371--394.

\bibitem[Yo3]{y:lie} Y.~Yoshii, \textit{Lie tori---a simple characterization
    of extended affine {L}ie  algebras}, Publ. Res. Inst. Math. Sci. \textbf{42}
    (2006), 739--762.%


\end{thebibliography}
\end{document}